\newif\ifdraft
\patchcmd{\@citex}{\if@filesw}{\getcitekey\@citeb \if@filesw}%
    {\typeout{*** SUCCESS ***}}{\typeout{*** FAIL ***}}
\patchcmd{\nocite}{\if@filesw}{\getcitekey\@citeb \if@filesw}%
    {\typeout{*** SUCCESS ***}}{\typeout{*** FAIL ***}}
\newenvironment{nenumerate}{%
	\begin{enumerate}[label=(\arabic{*}), ref=(\arabic{*})]
}{%
	\end{enumerate}%
}
\newenvironment{aenumerate}{%
	\begin{enumerate}[label=(\alph{*}), ref=(\alph{*})]
}{%
	\end{enumerate}%
}
\definecolor{labelkey}{gray}{0.5}
\tikzset{commutative diagrams/arrow style=math font}
\newlength{\myarrowsize} 
\newenvironment{diagram*}[2]{%
\[%
\begin{tikzpicture}[>=cmto,baseline=(current bounding box.center),%
	to/.style={->,font=\scriptsize,cap=round},%
	into/.style={cmhook->,font=\scriptsize,cap=round},%
	onto/.style={-cmonto,font=\scriptsize,cap=round},%
	math/.style={matrix of math nodes, row sep=#2, column sep=#1,%
		text height=1.5ex, text depth=0.25ex}]%
}{%
\end{tikzpicture}%
\]% 
\ignorespacesafterend%
}
\newcommand{\MHM}{\operatorname{MHM}}
\newcommand{\MHS}{\operatorname{MHS}}
\newcommand{\pt}{\mathit{pt}}
\newcommand{\Dmod}{\mathscr{D}}
\newcommand{\Mmod}{\mathcal{M}}
\newcommand{\shT}{\mathscr{T}}
\newcommand{\derR}{\mathbf{R}}
\newcommand{\derL}{\mathbf{L}}
\newcommand{\Ltensor}{\overset{\derL}{\tensor}}
\newcommand{\decal}[1]{\lbrack #1 \rbrack}
\newcommand{\shH}{\mathcal{H}}
\newcommand{\tensor}{\otimes}
\newcommand{\shHom}{\mathcal{H}\hspace{-1pt}\mathit{om}}
\newcommand{\NN}{\mathbb{N}}
\newcommand{\ZZ}{\mathbb{Z}}
\newcommand{\QQ}{\mathbb{Q}}
\newcommand{\RR}{\mathbb{R}}
\newcommand{\CC}{\mathbb{C}}
\newcommand{\HH}{\mathbb{H}}
\newcommand{\PP}{\mathbb{P}}
\newcommand{\menge}[2]{\bigl\{ \thinspace #1 \thinspace\thinspace \big\vert%
\thinspace\thinspace #2 \thinspace \bigr\}}
\DeclareMathOperator{\im}{im}
\DeclareMathOperator{\coker}{coker}
\DeclareMathOperator{\Res}{Res}
\DeclareMathOperator{\id}{id}
\renewcommand{\Re}{\operatorname{Re}}
\DeclareMathOperator{\Supp}{Supp}
\DeclareMathOperator{\rat}{rat}
\DeclareMathOperator{\Cone}{Cone}
\DeclareMathOperator{\Sym}{Sym}
\DeclareMathOperator{\gr}{gr}
\DeclareMathOperator{\DR}{DR}
\DeclareMathOperator{\Ext}{Ext}
\DeclareMathOperator{\GL}{GL}
\DeclareMathOperator{\SL}{SL}
\DeclareMathOperator{\can}{can}
\DeclareMathOperator{\Ch}{Ch}
\newcommand{\define}[1]{\emph{#1}}
\newcommand{\lie}[2]{\lbrack #1, #2 \rbrack}
\newcommand{\shf}[1]{\mathscr{#1}}
\newcommand{\OX}{\shf{O}_X}
\newcommand{\OmX}{\Omega_X}
\newcommand{\shV}{\shf{V}}
\newcommand{\restr}[1]{\big\vert_{#1}}
\newcommand{\argbl}{-}
\def\overbar#1#2#3{{%
	\setbox0=\hbox{\displaystyle{#1}}%
	\dimen0=\wd0
	\advance\dimen0 by -#2 
	\vbox {\nointerlineskip \moveright #3 \vbox{\hrule height 0.3pt width \dimen0}%
		\nointerlineskip \vskip 1.5pt \box0}%
}}
\newcommand{\dst}{\Delta^{\ast}}
\newcommand{\into}{\hookrightarrow}
\newcommand{\pil}{\pi_{\ast}}
\newcommand{\jl}{j_{\ast}}
\newcommand{\jls}{j_{!}}
\newcommand{\ju}{j^{\ast}}
\newcommand{\fu}{f^{\ast}}
\newcommand{\fl}{f_{\ast}}
\newcommand{\fus}{f^{!}}
\newcommand{\iu}{i^{\ast}}
\newcommand{\ius}{i^{!}}
\newcommand{\il}{i_{\ast}}
\newcommand{\pu}{p^{\ast}}
\newcommand{\pl}{p_{\ast}}
\newcommand{\tl}{t_{\ast}}
\newcommand{\piu}{\pi^{\ast}}
\newcommand{\fp}{f_{+}}
\newcommand{\DD}{\mathbb{D}}
\newcommand{\shO}{\shf{O}}
\let\@@seccntformat\@seccntformat
\renewcommand*{\@seccntformat}[1]{%
  \expandafter\ifx\csname @seccntformat@#1\endcsname\relax
    \expandafter\@@seccntformat
  \else
    \expandafter
      \csname @seccntformat@#1\expandafter\endcsname
  \fi
    {#1}%
}
\newcommand*{\@seccntformat@subsection}[1]{%
  \textbf{\csname the#1\endcsname.}
}
\let\@paragraph\paragraph
\renewcommand*{\paragraph}[1]{%
	\vspace{0.3\baselineskip}%
	\@paragraph{\textit{#1}}%
}
\newtheorem{theorem}[equation]{Theorem}
\newtheorem*{theorem*}{Theorem}
\newtheorem*{lemma*}{Lemma}
\newtheorem{corollary}[equation]{Corollary}
\newtheorem{proposition}[equation]{Proposition}
\newtheorem*{proposition*}{Proposition}
\newtheorem*{conjecture*}{Conjecture}
\theoremstyle{definition}
\newtheorem{definition}[equation]{Definition}
\newtheorem*{definition*}{Definition}
\theoremstyle{remark}
\newtheorem{exercise}[equation]{Exercise}
\newtheorem{example}[equation]{Example}
\newtheorem*{example*}{Example}
\newtheorem*{problem*}{Problem}
\newtheorem*{note}{Note}
\theoremstyle{plain}
\newcommand{\theoremref}[1]{\hyperref[#1]{Theorem~\ref*{#1}}}
\newcommand{\lemmaref}[1]{\hyperref[#1]{Lemma~\ref*{#1}}}
\newcommand{\definitionref}[1]{\hyperref[#1]{Definition~\ref*{#1}}}
\newcommand{\propositionref}[1]{\hyperref[#1]{Proposition~\ref*{#1}}}
\newcommand{\conjectureref}[1]{\hyperref[#1]{Conjecture~\ref*{#1}}}
\newcommand{\corollaryref}[1]{\hyperref[#1]{Corollary~\ref*{#1}}}
\newcommand{\exampleref}[1]{\hyperref[#1]{Example~\ref*{#1}}}
\newcommand{\exerciseref}[1]{\hyperref[#1]{Exercise~\ref*{#1}}}
\let\old@caption\caption
\renewcommand*{\caption}[1]{%
	\setcounter{figure}{\value{equation}}%
	\stepcounter{equation}%
	\old@caption{#1}\relax%
}
\newcounter{intro}
\newtheorem{intro-conjecture}[intro]{Conjecture}
\newtheorem{intro-corollary}[intro]{Corollary}
\newtheorem{intro-theorem}[intro]{Theorem}
\newcommand{\omY}{\omega_Y}
\newcommand{\OY}{\shO_Y}
\newcommand{\parref}[1]{\hyperref[#1]{\S\ref*{#1}}}
\newcommand{\chapref}[1]{\hyperref[#1]{Chapter~\ref*{#1}}}
\newcommand*\if@single[3]{%
  \setbox0\hbox{${\mathaccent"0362{#1}}^H$}%
  \setbox2\hbox{${\mathaccent"0362{\kern0pt#1}}^H$}%
  \ifdim\ht0=\ht2 #3\else #2\fi
  }
\newcommand*\rel@kern[1]{\kern#1\dimexpr\macc@kerna}
\newcommand*\widebar[1]{\@ifnextchar^{{\wide@bar{#1}{0}}}{\wide@bar{#1}{1}}}
\newcommand*\wide@bar[2]{\if@single{#1}{\wide@bar@{#1}{#2}{1}}{\wide@bar@{#1}{#2}{2}}}
\newcommand*\wide@bar@[3]{%
  \begingroup
  \def\mathaccent##1##2{%
%If there's more than a single symbol, use the first character instead (see below):
    \if#32 \let\macc@nucleus\first@char \fi
%Determine the italic correction:
    \setbox\z@\hbox{$\macc@style{\macc@nucleus}_{}$}%
    \setbox\tw@\hbox{$\macc@style{\macc@nucleus}{}_{}$}%
    \dimen@\wd\tw@
    \advance\dimen@-\wd\z@
%Now \dimen@ is the italic correction of the symbol.
    \divide\dimen@ 3
    \@tempdima\wd\tw@
    \advance\@tempdima-\scriptspace
%Now \@tempdima is the width of the symbol.
    \divide\@tempdima 10
    \advance\dimen@-\@tempdima
%Now \dimen@ = (italic correction / 3) - (Breite / 10)
    \ifdim\dimen@>\z@ \dimen@0pt\fi
%The bar will be shortened in the case \dimen@<0 !
    \rel@kern{0.6}\kern-\dimen@
    \if#31
      \overline{\rel@kern{-0.6}\kern\dimen@\macc@nucleus\rel@kern{0.4}\kern\dimen@}%
      \advance\dimen@0.4\dimexpr\macc@kerna
%Place the combined final kern (-\dimen@) if it is >0 or if a superscript follows:
      \let\final@kern#2%
      \ifdim\dimen@<\z@ \let\final@kern1\fi
      \if\final@kern1 \kern-\dimen@\fi
    \else
      \overline{\rel@kern{-0.6}\kern\dimen@#1}%
    \fi
  }%
  \macc@depth\@ne
  \let\math@bgroup\@empty \let\math@egroup\macc@set@skewchar
  \mathsurround\z@ \frozen@everymath{\mathgroup\macc@group\relax}%
  \macc@set@skewchar\relax
  \let\mathaccentV\macc@nested@a
%The following initialises \macc@kerna and calls \mathaccent:
  \if#31
    \macc@nested@a\relax111{#1}%
  \else
%If the argument consists of more than one symbol, and if the first token is
%a letter, use that letter for the computations:
    \def\gobble@till@marker##1\endmarker{}%
    \futurelet\first@char\gobble@till@marker#1\endmarker
    \ifcat\noexpand\first@char A\else
      \def\first@char{}%
    \fi
    \macc@nested@a\relax111{\first@char}%
  \fi
  \endgroup
}
\newcommand{\HM}[2]{\operatorname{HM}(#1, #2)}
\newcommand{\HMs}[3]{\operatorname{HM}_{\leq #1}(#2, #3)}
\newcommand{\HMZ}[3]{\operatorname{HM}_{#1}(#2, #3)}
\newcommand{\HMp}[2]{\operatorname{HM}^p(#1, #2)}
\renewcommand{\MHM}[1]{\operatorname{MHM}(#1)}
\newcommand{\MHW}[1]{\operatorname{MHW}(#1)}
\newcommand{\MHMp}[1]{\operatorname{MHM}^p(#1)}
\newcommand{\MHMpex}[2]{\operatorname{MHM}_{\mathit{ex}}^p(#1, #2)}
\newcommand{\MHWp}[1]{\operatorname{MHW}^p(#1)}
\newcommand{\MHMalg}[1]{\operatorname{MHM}_{\mathrm{alg}}(#1)}
\newcommand{\MHMalgg}[1]{\operatorname{MHM}_{\mathrm{alg}} \bigl( #1 \bigr)}
\newcommand{\PervQ}{\operatorname{Perv}_{\QQ}}
\renewcommand{\shV}{\mathcal{V}}
\newcommand{\shVt}{\tilde{\shV}}
\newcommand{\Xt}{\tilde{X}}
\newcommand{\Xb}{\bar{X}}
\newcommand{\Xan}{X^{\mathit{an}}}
\newcommand{\Uan}{U^{\mathit{an}}}
\newcommand{\Xban}{\bar{X}^{\mathit{an}}}
\newcommand{\Zt}{\tilde{Z}}
\newcommand{\ellt}{\tilde{\ell}}
\newcommand{\Mt}{\tilde{M}}
\newcommand{\Kt}{\tilde{K}}
\newcommand{\ft}{\tilde{f}}
\newcommand{\ftp}{\ft_{+}}
\newcommand{\ftl}{\ft_{\ast}}
\newcommand{\fls}{f_{!}}
\newcommand{\kl}{k_{\ast}}
\newcommand{\ip}{i_{+}}
\newcommand{\pus}{p^{!}}
\newcommand{\ppsi}{ { }^{p} \psi}
\newcommand{\pphi}{ { }^{p} \phi}
\newcommand{\pshH}{{}^p \! R}
\newcommand{\omYX}{\omega_{Y/X}}
\newcommand{\HMZp}[3]{\operatorname{HM}_{#1}^p(#2, #3)}
\renewcommand{\shV}{\mathcal{V}}
\renewcommand{\DD}{\mathbf{D}}
\renewcommand{\derR}{\mathbf{R}}
\renewcommand{\derL}{\mathbf{L}}
\newcommand{\Rmod}{\mathscr{R}}
\newcommand{\Amod}{\mathscr{A}}
\newcommand{\Kmod}{\mathscr{K}}
\newcommand{\opp}{\mathit{opp}}
\newcommand{\shC}{\mathscr{G}}
\newcommand{\Dbcoh}{\mathrm{D}_{\mathit{coh}}^{\mathit{b}}}
\newcommand{\DbcohG}{\mathrm{D}_{\mathit{coh}}^{\mathit{b}}\mathrm{G}}
\newcommand{\DbcohF}{\mathrm{D}_{\mathit{coh}}^{\mathit{b}}\mathrm{F}}
\newcommand{\DF}{\mathrm{DF}}
\newcommand{\Db}{\mathrm{D}^{\mathit{b}}}
\newcommand{\Dbc}{\Db_{\mathit{c}}}
\DeclareMathOperator{\MF}{MF}
\DeclareMathOperator{\MG}{MG}
\newcommand{\MGcoh}{\MG_{\mathit{coh}}}
\newcommand{\Mcoh}{M_{\mathit{coh}}}
\newcommand{\Mmodf}{\Mmod_f}
\newcommand{\Mmodfal}[1]{\Mmod_{f,#1}}
\newcommand{\Mmodt}{\tilde{\Mmod}}
\newcommand{\CCst}{\CC^{\times}}
\newcommand{\Pell}{P_{\ell}}
\newcommand{\Pellt}{P_{\ellt}}
\DeclareMathOperator{\var}{var}
\newcommand{\omX}{\omega_X}
\newcommand{\VKM}{\mathcal{V}}
\newcommand{\famY}{\mathscr{Y}}
\begin{document}

%========================================================
\title[An overview of mixed Hodge modules]%
	{An overview of Morihiko Saito's theory\\ of mixed Hodge modules}
\author{Christian Schnell}
\address{%
Department of Mathematics\\
Stony Brook University\\
Stony Brook, NY 11794-3651}
\email{cschnell@math.sunysb.edu}

\begin{abstract}
% We give an overview of the theory of mixed Hodge modules. 
After explaining the definition of pure and mixed Hodge modules on complex manifolds, we
describe some of Saito's most important results and their proofs, and then discuss
two simple applications of the theory.  
\end{abstract}
\date{\today}
\maketitle
%========================================================

\section{Introduction}

\subsection{Nature of Saito's theory}

Hodge theory on complex manifolds bears a surprising likeness to $\ell$-adic
cohomology theory on algebraic varieties defined over finite fields. This was pointed
out by Deligne \cite{Deligne-I}, who also proposed a ``heuristic dictionary''
for translating results from one language into the other, and used it to predict,
among other things, the existence of limit mixed Hodge structures and the notion of 
admissibility for variations of mixed Hodge structure. But the most important example of a
successful translation is without doubt Morihiko Saito's theory of mixed Hodge
modules: it is the analogue, in Hodge theory, of the mixed $\ell$-adic complexes
introduced by Beilinson, Bernstein, and Deligne \cite{BBD}. One of the main
accomplishments of Saito's theory is a Hodge-theoretic proof for the decomposition
theorem; the original argument in \cite[\S6.2]{BBD} was famously based on reduction
to positive characteristic.

Contained in two long papers \cite{Saito-HM,Saito-MHM}, Saito's theory is a vast
generalization of classical Hodge theory, built on the foundations laid by many
people during the 1970s and 1980s: the theory of perverse sheaves, $\Dmod$-module
theory, and the study of variations of mixed Hodge structure and their degenerations.
Roughly speaking, classical Hodge theory can deal with two situations: the cohomology
groups of a single complex algebraic variety, to which it assigns a mixed Hodge
structure; and the cohomology groups of a family of smooth projective varieties, to
which it assigns a variation of Hodge structure. The same formalism, based on the
theory of harmonic forms and the K\"ahler identities, also applies to cohomology
groups of the form $H^k(B, \shH)$, where $B$ is a smooth projective variety and
$\shH$ a polarizable variation of Hodge structure. Saito's theory, on the other hand,
can deal with arbitrary families of algebraic varieties, and with cohomology groups
of the form $H^k(B, \shH)$, where $B$ is non-compact or singular, and where $\shH$ is
an admissible variation of mixed Hodge structure. It also provides a nice formalism,
in terms of filtered $\Dmod$-modules, that treats all those situations in a
consistent way. 

Conceptually, there are two ways of thinking about mixed Hodge modules:
\begin{enumerate}
\item Mixed Hodge modules are \emph{perverse sheaves with mixed Hodge structure}.
They obey the same six-functor formalism as perverse sheaves; whenever an 
operation on perverse sheaves produces a $\QQ$-vector space, the corresponding
operation on mixed Hodge modules produces a mixed Hodge structure.
\item Mixed Hodge modules are \emph{a special class of filtered $\Dmod$-modules} 
with good properties. Whereas arbitrary filtered $\Dmod$-modules do not behave well
under various operations, the ones in Saito's theory do. The
same is true for the coherent sheaves that make up the filtration on the
$\Dmod$-module; many applications of the theory rely on this fact.
\end{enumerate}

The first point of view is more common among people working in Hodge theory;
the second one among people in other areas who are using mixed Hodge modules to
solve their problems. Saito's theory has been applied with great success in
representation theory \cite{BW,SV}, singularity theory
\cite{Saito:Steenbrink,BMS,BFNP,DMS}, algebraic geometry
\cite{Saito-K,Saito-AMS,one-forms}, and nowadays also in Donaldson-Thomas theory
\cite{BBDJS,BBS}. I should also mention that the whole theory has recently been generalized
to arbitrary semisimple representations of the fundamental group on algebraic varieties
\cite{Sabbah-B}.

\subsection{About this article}

The purpose of this article is to explain the definition of pure and mixed Hodge
modules on complex manifolds; to describe some of the most important results and
their proofs; and to discuss two simple applications of the theory. Along the way,
the reader will also find some remarks about mixed Hodge modules on analytic spaces
and on algebraic varieties, and about the construction of various functors.  While it
is easy to learn the formalism of mixed Hodge modules, especially for someone who is
already familiar with perverse sheaves, it helps to know something about the
internals of the theory, too. For that reason, more than half of the text is devoted
to explaining Saito's definition and the proofs of two crucial theorems; in return, I
had to neglect other important parts of the story.

The article has its origins in two lectures that I gave during the workshop on
geometric methods in representation theory in Sanya. In editing the notes, I also tried to incorporate
what I learned during a week-long workshop about mixed Hodge modules and their
applications, held in August 2013 at the Clay Mathematics Institute in Oxford. Both
in my lectures and in the text, I decided to present Saito's theory with a focus on
filtered $\Dmod$-modules and their properties; in my experience, this aspect of the
theory is the one that is most useful in applications.

For those looking for additional information about mixed Hodge modules, 
there are two very readable surveys by Saito himself: a short introduction that
focuses on ``how to use mixed Hodge modules'' \cite{Saito-intro}, and a longer
article with more details \cite{Saito-th}. The technical aspects of the theory are
discussed in a set of lecture notes by Shimizu \cite{RIMS}, and a brief axiomatic
treatment can be found in the book by Peters and Steenbrink \cite[Ch.~14]{PS}. There
are also unpublished lecture notes by Sabbah \cite{Sabbah}, who gives a
beautiful account of the theory on curves.

\subsection{Left and right D-modules}

An issue that always comes up in connection with Saito's theory is whether one
should use left or right $\Dmod$-modules. This is partly a matter of taste, because
the two notions are equivalent on complex manifolds, but each choice brings its
own advantages and disadvantages. Saito himself dealt with this problem by using left
$\Dmod$-modules in the introduction to \cite{Saito-HM,Saito-MHM}, and right
$\Dmod$-modules in the definitions and proofs. After much deliberation, I decided to
follow Saito and to write this article entirely in terms of \emph{right
$\Dmod$-modules}. Since this is sure to upset some readers, let me explain
why.

One reason is that the direct image and duality functors are naturally defined for
right $\Dmod$-modules. Both functors play important roles in the theory: the duality
functor is needed to define polarizations, and the direct image functor is needed for
example to define mixed Hodge modules on singular spaces. The idea is that
$\Dmod$-modules on a singular space $X$ are the same thing as $\Dmod$-modules on some
ambient complex manifold with support in $X$; to make this definition independent of
the choice of embedding, it is better to use right $\Dmod$-modules.  Another reason
is that it makes sense to distinguish, conceptually, between variations of Hodge
structure and Hodge modules: for example, it is easy to define inverse images for
variations of Hodge structure, but not for Hodge modules. Replacing a flat bundle by
the corresponding right $\Dmod$-module further emphasizes this distinction.

\subsection{Brief summary}

Let $X$ be a complex algebraic variety. Although Saito's theory works more generally
on complex manifolds and analytic spaces, we shall begin with the case of algebraic
varieties, because it is technically easier and some of the results are
stronger. Saito constructs the following two categories:%
\footnote{To be precise, $\HM{X}{w}$ should be $\HMp{X}{w}$, and $\MHM{X}$ should
be $\MHMalg{X}$.}
\begin{align*}
\HM{X}{w} &= \text{polarizable Hodge modules of weight $w$} \\
\MHM{X} &= \text{graded-polarizable mixed Hodge modules}
\end{align*}
Both are abelian categories, with an exact and faithful functor
\[
	\rat \colon \MHM{X} \to \PervQ(X)
\]
to the category of perverse sheaves (with coefficients in $\QQ$). All the familiar
operations on perverse sheaves -- such as direct and inverse images along arbitrary
morphisms, Verdier duality, nearby and vanishing cycles -- are lifted to mixed Hodge
modules in a way that is compatible with the functor ``$\rat$''. This means
that whenever some operation on perverse sheaves produces a $\QQ$-vector
space, the corresponding operation on mixed Hodge modules endows it with a mixed
Hodge structure. It also means that every semisimple perverse sheaf of geometric
origin is a direct summand of $\CC \tensor_{\QQ} \rat M$ for some polarizable Hodge
module $M$. After going to the derived category $\Db \MHM{X}$, one has a formalism of
weights, similar to the one for mixed complexes in \cite[\S5.1.5]{BBD}.

The relation between (mixed) Hodge modules and variations of (mixed) Hodge structure
is similar to the relation between perverse sheaves and local systems. 
Every polarizable variation of Hodge structure of weight $w - \dim Z$ on a
Zariski-open subset of an irreducible subvariety $Z \subseteq X$ extends uniquely to
a polarizable Hodge module $M \in \HM{X}{w}$ with strict support $Z$, which means
that $\rat M$ is the intersection complex of the underlying local system of
$\QQ$-vector spaces. Conversely, every polarizable Hodge module on $X$ is isomorphic
to a direct sum of objects of the above type, and this so-called \define{decomposition by
strict support} is unique. The category $\HM{X}{w}$ is semi-simple; the perverse
sheaves that one gets by applying the functor ``$\rat$'' are also semi-simple.

The category $\MHM{X}$ of graded-polarizable mixed Hodge modules is no longer
semi-simple, but every object $M \in \MHM{X}$ comes with a finite increasing
filtration $W_{\bullet} M$, called the \define{weight filtration}, such that 
\[
	\gr_{\ell}^W \! M = W_{\ell} M / W_{\ell-1} M \in \HM{X}{\ell}.
\]
Every mixed Hodge module is therefore an extension of several pure Hodge modules;
the extensions among these are not arbitrary, but are
controlled by an ``admissibility'' condition similar to the one appearing in the
theory of variations of mixed Hodge structure by Steenbrink and Zucker \cite{SZ} and
Kashiwara \cite{Kashiwara-st}. In the same way that a perverse sheaf can be obtained
by gluing together local systems on a stratification \cite{Verdier,Beilinson}, one
can think of a mixed Hodge module as being obtained by gluing together admissible
variations of mixed Hodge structure.

\subsection{Examples}

In order to get some intuition, let us look at a few typical examples of mixed Hodge
modules. The exact definitions will be given in \parref{par:def-HM} and
\parref{par:def-MHM} below; for the time being, we shall describe mixed Hodge
modules by giving the underlying perverse sheaf, the underlying filtered
$\Dmod$-module, and the weight filtration. In more detail, suppose that $M \in \MHM{X}$
is a mixed Hodge module on a smooth algebraic variety $X$, considered as a complex
manifold. The perverse sheaf is $\rat M$; its complexification is
isomorphic to the de Rham complex of a regular holonomic right $\Dmod_X$-module
$\Mmod$, and the additional piece of data is an increasing filtration $F_{\bullet}
\Mmod$ by coherent $\OX$-modules, compatible with the order filtration on $\Dmod_X$.

\begin{example}
The most basic example of a polarizable Hodge module on a nonsingular algebraic
variety $X$ of dimension $n$ is the canonical bundle $\omX$. It is naturally a
right $\Dmod$-module, and together with the filtration
\[
	F_{-n-1} \, \omX = 0 \quad \text{and} \quad F_{-n} \, \omX = \omX
\]
and the perverse sheaf $\QQ_X \decal{n}$, it is an object of the category
$\HM{X}{n}$, usually denoted by $\QQ_X^H \decal{n}$. One can obtain many additional
examples by applying various functors, such as the direct image by a morphism $f
\colon X \to Y$. When $f$ is proper, the filtration on $\shH^i \fl \QQ_X^H \decal{n}$
starts with the coherent $\OY$-module $R^i \fl \omX$; this fact is behind several results
in algebraic geometry, such as Koll\'ar's theorems about
higher direct images of dualizing sheaves (see \parref{par:Kollar}).
\end{example}

\begin{example}
More generally, one can consider an irreducible subvariety $Z \subseteq X$, and a
polarizable variation of Hodge structure of weight $w - \dim Z$ on a Zariski-open
subset of the smooth locus of $Z$. As mentioned above, it extends uniquely to
a polarizable Hodge module $M \in \HM{X}{w}$ with strict support $Z$. In this case,
the perverse sheaf $\rat M$ is simply the intersection complex of the underlying
local system of $\QQ$-vector spaces. The filtration $F_{\bullet} \Mmod$ on the
corresponding $\Dmod$-module is also uniquely determined by the Hodge filtration, but
hard to make explicit (except in special cases, for instance when the variation
is defined on the complement of a normal crossing divisor). 
\end{example}

\begin{example}
One case where one can describe the filtration in geometric terms is the family
of hyperplane sections of a smooth projective variety \cite{residues}. Suppose that
$X$ is a smooth projective variety of dimension $n$, and that $Y \subseteq X$ is a
nonsingular hyperplane section (in a fixed projective embedding). The entire
cohomology of $Y$ is determined by that of $X$, with the exception of the so-called
\define{variable part}
\[
	H_0^{n-1}(Y, \QQ) = 
		\ker \Bigl( H^{n-1}(Y, \QQ) \to H^{n+1} \bigl( X, \QQ(1) \bigr) \Bigr).
\]
Now let $B$ be the projective space parametrizing hyperplane sections of $X$, and let $f
\colon \famY \to B$ be the universal family. Let $B_0 \subseteq B$ be the open subset
corresponding to nonsingular hyperplane sections, and let $\shV$ be the variation of
Hodge structure on the variable part of their cohomology. As in the previous example,
it determines a Hodge module $M$ on $B$, and one can show that $M$ is isomorphic to a
direct summand of $\shH^0 \fl \QQ_{\famY}^H \decal{d+n-1}$, where $d = \dim B$. The
filtration $F_{\bullet} \Mmod$ on the underlying $\Dmod$-module can be
described using residues of meromorphic forms. For a nonsingular hyperplane
section $Y \subseteq X$, a classical theorem by Carlson and Griffiths \cite{CG} says
that the \define{residue mapping} 
\[
	\Res_Y \colon H^0 \bigl( X, \OmX^n(pY) \bigr) \to F^{n-p} H_0^{n-1}(Y, \CC)
\]
is surjective, provided the line bundle $\OX(Y)$ is sufficiently ample. Given an open
subset $U \subseteq B$ and a meromorphic form 
\[
	\omega \in H^0 \Bigl( U \times X, 
		\Omega_{B \times X}^{d+n} \bigl( (n+d+p) \famY \bigr) \Bigr),
\]
one can therefore take the residue along smooth fibers to obtain a holomorphic
section of $\omega_B \tensor \jl F^{-d-p} \shV$, where $j \colon B_0 \into B$ denotes
the inclusion. Then $F_p \Mmod$ is exactly the subsheaf of $\omega_B \tensor \jl
F^{-d-p} \shV$ given by all sections of this kind.
\end{example}

\begin{example}
One can obtain mixed Hodge modules by taking the direct image along an
open embedding $j \colon U \into X$. Suppose that $X$ is a nonsingular algebraic
variety of dimension $n$, and that $U = X \setminus D$ is the complement of a
divisor. Then $\jl \QQ_U^H \decal{n}$ is a mixed Hodge module on $X$; the underlying
perverse sheaf is $\derR \jl \QQ \decal{n}$, and the underlying regular holonomic
$\Dmod$-module is $\omX(\ast D)$, the sheaf of meromorphic $n$-forms on $X$ that are
holomorphic on $U$. When the divisor $D$ is nonsingular, one can also say explicitly
what the Hodge filtration and the weight filtration are. The Hodge filtration
satisfies
\[
	F_{-n-1} \, \omX(\ast D) = 0 \quad \text{and} \quad
	F_p \, \omX(\ast D) = \omX \bigl( (p + n + 1) D \bigr) 
		\quad \text{for $p \geq -n$,}
\]
and is therefore essentially the filtration by pole order; the weight filtration
satisfies
\[
	W_n \jl \QQ_U^H \decal{n} = \QQ_X^H \decal{n} 
		\quad \text{and} \quad
	W_{n+1} \jl \QQ_U^H \decal{n} = \jl \QQ_U^H \decal{n},
\]
and the quotient $\gr_{n+1}^W \jl \QQ_U^H \decal{n}$ is isomorphic to the direct
image of $\QQ_D^H(-1) \decal{n-1}$, with a Tate twist to change the weight from $n-1$
to $n+1$. When the divisor $D$ is singular, both filtrations still exist, but are
hard to describe concretely.
\end{example}

\begin{example}
An interesting special case of the previous construction has been studied by
Cautis and Kamnitzer \cite{Cautis}. Let $G(k,n)$ denote the Grassmannian parametrizing
$k$-dimensional subspaces of $\CC^n$, and consider the divisor
\[
	D = \menge{(V,W) \in G(k,n) \times G(n-k,n)}{V \cap W \neq 0}
\]
inside the product $X = G(k,n) \times G(n-k,n)$; without loss of generality, $k \leq
n/2$. We would like to describe the direct image $\jl \QQ_U^H \decal{d} \in \MHM{X}$,
where $d = 2k(n-k)$ is the dimension of $X$. The open set $U$ is one of the orbits of
the natural $\GL(n)$-action on $X$; the divisor $D$ is the union of the other orbits
\[
	Z_r = \menge{(V,W) \in G(k,n) \times G(n-k,n)}{\dim V \cap W \geq r}
\]
for $1 \leq r \leq k$, and therefore singular once $k \geq 2$. Using the group
action, one can show that for $r = 0, \dotsc, k$, the graded quotient
\[
	\gr_{d+r}^W \jl \QQ_U^H \decal{d} \in \HM{X}{d+r}
\]
of the weight filtration is the Hodge module corresponding to the constant variation
of Hodge structure $\QQ \bigl( -\frac{1}{2}r(r+1) \bigr)$ on the orbit $Z_r$; it has
the correct weight because
\[
	 \dim Z_r + r(r+1) = d - r^2 + r(r+1) = d + r.
\]
One can also compute the graded module $\gr_{\bullet}^F \omX(\ast D)$ and show that
the characteristic variety is the union of the conormal varieties of the orbit
closures.
\end{example}

\subsection{Acknowledgements}

I thank Wilfried Schmid for inviting me to Sanya, and the editors of the workshop
proceedings for giving me the time to rewrite my lecture notes. Morihiko Saito and
Claude Sabbah sent me several pages of detailed comments about the first version;
I am very grateful for their advice. I also take this opportunity to thank
the participants of the Clay workshop for many productive discussions about
Saito's theory. During the preparation of this paper, I have been supported in part by
grant DMS-1331641 from the National Science Foundation.

\section{Pure Hodge modules}

After a brief review of nearby and vanishing cycle functors, we explain the
definition of (polarizable) Hodge modules on complex manifolds. A Hodge module is
basically a certain type of $\Dmod$-module with a filtration and a rational
structure. The goal is understand the need for the various technical conditions that
Saito imposes on such objects in order to define the category $\HMp{X}{w}$ of
polarizable Hodge modules of weight $w$ on a complex manifold $X$.

\subsection{Basic objects}

Saito's theory is an extension of classical Hodge theory, and the objects that
he uses are a natural generalization of variations of Hodge structure. Recall that a
\define{polarized variation of Hodge structure of weight $w$} on a complex manifold
$X$ consists of the following four things: 
\begin{enumerate}
\item A local system $V$ of finite-dimensional $\QQ$-vector spaces.
\item A holomorphic vector bundle $\shV$ with integrable connection
$\nabla \colon \shV \to \OmX^1 \tensor \shV$.
\item A Hodge filtration $F^{\bullet} \shV$ by holomorphic subbundles.
\item A bilinear form $Q \colon V \tensor_{\QQ} V \to \QQ(-w)$, where $\QQ(-w) = (2
\pi i)^{-w} \QQ \subseteq \CC$.
\end{enumerate}
They are related by the condition that the local system of $\nabla$-flat holomorphic
sections of $\shV$ is isomorphic to $V \tensor_{\QQ} \CC$; in particular, $\shV
\simeq V \tensor_{\QQ} \OX$. At every point $x \in X$, the filtration $F^{\bullet}
\shV_x$ and the rational structure $V_x$ should define a Hodge structure of weight
$w$ on the $\CC$-vector space $\shV_x$, and this Hodge structure should be polarized
by the bilinear form $Q_x$. Globally, the Hodge filtration is required to satisfy the
Griffiths transversality relation
\[
	\nabla \bigl( F^p \shV \bigr) \subseteq \OmX^1 \tensor F^{p-1} \shV.
\]
By the holomorphic Poincar\'e lemma, the holomorphic de Rham complex
\[
	\shV \to \OmX^1 \tensor \shV \to \dotsb \to \OmX^{\dim X} \tensor \shV
\]
is a resolution of the locally constant sheaf $V \tensor_{\QQ} \CC$; this gives
another way to describe the relationship between $V$ and $(\shV, \nabla)$. Lastly,
recall that a variation of Hodge structure is called \define{polarizable} if it
admits at least one polarization.  

Saito's idea is to generalize variations of Hodge structure by allowing perverse
sheaves instead of local systems; because of the Riemann-Hilbert correspondence, it
then becomes necessary to use regular holonomic $\Dmod$-modules instead of vector
bundles with integrable connection. This leads to the following definition.

\begin{definition}
A \define{filtered regular holonomic $\Dmod$-module with $\QQ$-structure} is a triple
$M = (\Mmod, F_{\bullet} \Mmod, K)$, consisting of the following
objects:
\begin{enumerate}
\item A constructible complex of $\QQ$-vector spaces $K$.
\item A regular holonomic right $\Dmod_X$-module $\Mmod$ with an isomorphism
\[
	\DR(\Mmod) \simeq \CC \tensor_{\QQ} K.
\]
By the Riemann-Hilbert correspondence, this makes $K$ a perverse sheaf.
\item A good filtration $F_{\bullet} \Mmod$ by $\OX$-coherent subsheaves of $\Mmod$,
such that
\[
	F_p \Mmod \cdot F_k \Dmod \subseteq F_{p+k} \Mmod
\]
and such that $\gr_{\bullet}^F \! \Mmod$ is coherent over $\gr_{\bullet}^F \! \Dmod_X
\simeq \Sym^{\bullet} \shT_X$.
\end{enumerate}
\end{definition}

Here $\shT_X$ is the tangent sheaf of the complex manifold $X$. The \define{de
Rham complex} of a right $\Dmod$-module $\Mmod$ is the following complex:
\[
	\DR(\Mmod) = \Bigl\lbrack 
		\Mmod \tensor \textstyle\bigwedge\nolimits^n \shT_X \to
		\dotsb \to \Mmod \tensor \shT_X \to \Mmod
	\Bigr\rbrack \decal{n}
\]
It is concentrated in degrees $-n, \dotsc, -1, 0$, where $n = \dim X$; and naturally
filtered by the family of subcomplexes
\[
	F_p \DR(\Mmod) = \Bigl\lbrack 
		F_{p - n} \Mmod \tensor \textstyle\bigwedge\nolimits^n \shT_X \to
		\dotsb \to F_{p-1} \Mmod \tensor \shT_X \to F_p \Mmod
	\Bigr\rbrack \decal{n}.
\]
As I mentioned earlier, one can think of $M$ as a perverse sheaf with an
additional filtration on the corresponding regular holonomic $\Dmod$-module; the
functor ``$\rat$'' to the category of perverse sheaves is of course defined by
setting $\rat M = K$.  

\begin{example}
To every variation of Hodge structure, one can associate a filtered regular
holonomic $\Dmod$-module with $\QQ$-structure by the following procedure. The
perverse sheaf is $K = V \decal{n}$, and the regular holonomic $\Dmod$-module is
\[
	\Mmod = \omX \tensor_{\OX} \shV,
\]
with right $\Dmod$-module structure given by the rule $(\omega \tensor m) \cdot \xi =
(\omega \cdot \xi) \tensor m - \omega \tensor \nabla_{\xi} m$ for local holomorphic
sections $\omega \in \omX$, $m \in \Mmod$, and $\xi \in \shT_X$.  It is filtered by
the coherent subsheaves 
\[
	F_p \Mmod = \omX \tensor_{\OX} F^{-p-n} \shV;
\]
we will see later that the shift in the filtration gives $M$ the weight $w + n$. The
definitions are set up in such a way that the de Rham complex $\DR(\Mmod)$ is
isomorphic to the holomorphic de Rham complex of $\shV$, shifted $n$ steps to the
left; because of the holomorphic Poincar\'e lemma, this means that $\DR(\Mmod) \simeq
\CC \tensor_{\QQ} K$.
\end{example}

\begin{example}
A typical example is the constant variation of Hodge structure $\QQ_X$ of weight $0$.
The corresponding triple is $\bigl( \omX, F_{\bullet} \omX, \QQ_X \decal{n} \bigr)$; 
here $\omX$ is naturally a right $\Dmod$-module, and the filtration is such that
$\gr_p^F \omX = 0$ for $p \neq -n$.  
\end{example}

\begin{example}
The \define{Tate twist} of $M$ by an integer $k$ is the new triple
\[
	M(k) = \Bigl( \Mmod, F_{\bullet - k} \Mmod, K \tensor_{\QQ} \QQ(k) \Bigr),
\]
where $\QQ(k) = (2 \pi i)^k \QQ \subseteq \CC$. For variations of Hodge structure,
this definition specializes to the usual notion.
\end{example}

\begin{example}
When $X$ is compact, the cohomology groups $H^i \bigl( X, \DR(\Mmod) \bigr)$ are
finite-dimensional $\CC$-vector spaces; they come with a filtration induced
by $F_{\bullet} \DR(\Mmod)$, and a $\QQ$-structure induced by
the isomorphism with $H^i(X, K) \tensor_{\QQ} \CC$. We will see later that they are
polarizable Hodge structures when $X$ is projective and $M$ is a polarizable Hodge
module.
\end{example}

The class of all filtered regular holonomic $\Dmod$-modules with $\QQ$-structure is
of course much too large for the purposes of Hodge theory; the problem is to find a
subclass of ``Hodge modules'' that is small enough to have good properties, but still
large enough to contain all polarizable variations of Hodge structure. We would also
like the resulting theory to be similar to the theory of perverse sheaves: for
example, there should be a way of extending a polarizable variation of Hodge
structure on a Zariski-open subset of an irreducible subvariety $Z \subseteq X$ to a
polarizable Hodge module on $X$, similar to taking the intersection complex of a
local system.

We shall define polarizable Hodge modules by imposing several additional conditions
on $(\Mmod, F_{\bullet} \Mmod)$ and $K$; these conditions should be strong enough
that, at the end of the day, every polarizable Hodge module $M$ of weight $w$ is of
the form
\[	
	M \simeq \bigoplus_{Z \subseteq X} M_Z,
\]
where $M_Z$ is obtained from a polarizable variation of Hodge structure of
weight $w - \dim Z$ on a Zariski-open subset of the irreducible subvariety $Z$. This is
a reasonable demand for two reasons: (1) The perverse sheaves appearing in the
decomposition theorem are direct sums of intersection complexes. (2) The underlying
local system of a polarizable variation of Hodge structure is semisimple
\cite{Deligne-fin}.

In other words, we know from the beginning what objects we want in the theory; the
problem is to find a good description by conditions of a local nature. Moreover, the
conditions have to be such that we can check them after applying various operations.
Saito's solution is, roughly speaking, to consider only those objects
whose ``restriction'' to every point is a polarized Hodge structure; this idea does not
directly make sense at points where $\Mmod$ is singular, but can be implemented with
the help of the nearby and vanishing cycle functors (which are a replacement for the
naive operation of restricting to a divisor). 

\begin{note}
As Schmid pointed out after my lectures, the $\QQ$-structure is not essential. In
fact, Saito's theory works just as well when $K$ is a constructible complex of
$\RR$-vector spaces; with a little bit of extra effort, one can even get by with
coefficients in $\CC$. This point is discussed in more detail in \cite[Section~3.2]{DS}.
\end{note}

\subsection{Review of nearby and vanishing cycles}
\label{par:review-cycles}

Before we can understand Saito's definition, we need to become sufficiently familiar
with nearby and vanishing cycle functors, both for perverse sheaves and for regular
holonomic $\Dmod$-modules. This topic is somewhat technical -- but it is really at the
heart of Saito's theory, and so we shall spend some time reviewing it. I decided
to state many elementary properties in the form of exercises; working them out
carefully is strongly recommended.

To begin with, suppose that $X$ is a complex manifold, and that $f \colon X \to
\Delta$ is a holomorphic function that is submersive over the punctured unit disk
$\dst = \Delta \setminus \{0\}$.  Remembering that the function $e \colon \HH \to \Delta$,
$e(z) = e^{2 \pi i z}$, makes the upper half-plane into the universal covering space
of $\dst$, we get the following commutative diagram:
\[
\begin{tikzcd}
\Xt \dar \rar{k} & X \dar{f} & X_0 \dar \lar[swap]{i} \\ 
\HH \rar{e} & \Delta & \{0\} \lar
\end{tikzcd}
\]
Here $\Xt$ is the fiber product of $X$ and $\HH$ over $\Delta$. For a constructible
complex of $\CC$-vector spaces $K$ on $X$, we define the complex of \define{nearby cycles}
\[
	\psi_f K = i^{-1} \derR \kl \bigl( k^{-1} K \bigr);
\]
roughly speaking, this means that we pull back $K$ to the ``generic fiber'' of $f$,
and then retract onto the ``special fiber''. Accordingly, $\psi_f K$ contains 
more information about the behavior of $K$ near the divisor $X_0$ than the naive
restriction $i^{-1} K$. There is an obvious morphism $i^{-1} K \to \psi_f K$,
and we define the \define{vanishing cycles}
\[
	\phi_f K = \Cone \Bigl( i^{-1} K \to \psi_f K \Bigr).
\]
By construction, there is a canonical morphism $\can \colon \psi_f K \to \phi_f K$;
it is also possible to construct a morphism $\var \colon \phi_f K \to \psi_f K(-1)$
going in the opposite direction.%
\footnote{For historical reasons, Saito denotes this morphism by the symbol
``$\operatorname{Var}$''.}
Both $\psi_f K$ and $\phi_f K$ are constructible
complexes of $\CC$-vector spaces on $X_0$. Gabber showed that when $K$ is a perverse
sheaf, the shifted complexes
\[
	\ppsi_f K = \psi_f K \decal{-1}
		\quad \text{and} \quad 
	\pphi_f K = \phi_f K \decal{-1}
\]
are again perverse sheaves (see \cite[p.~14]{Brylinski} for more information). By
construction, the complex $\ppsi_f K$ has a monodromy operator $T$, induced by the
automorphism $z \mapsto z+1$ of the upper half-plane $\HH$. Since perverse sheaves
form an abelian category, we can decompose into generalized eigenspaces
\[
	\ppsi_{f, \lambda} K = \ker (T - \lambda \id)^m, \qquad \text{$m \gg 0$}.
\]
In summary, we have a decomposition
\[
	\ppsi_f K = \bigoplus_{\lambda \in \CCst} \ppsi_{f, \lambda} K
\]
in the category of perverse sheaves on $X_0$. There is a similar
decomposition for $\pphi_f K$; by construction, $\ppsi_{f, \lambda} K
\simeq \pphi_{f, \lambda} K$ for every complex number $\lambda \neq 1$. On the
unipotent part $\ppsi_{f,1} K$, the composition $\var \circ \can$ is equal to the
nilpotent operator $N = (2 \pi i)^{-1} \log T$; the same goes for $\can \circ \var$
on $\pphi_{f,1} K$.

As the Riemann-Hilbert correspondence would suggest, there is also a notion of nearby
and vanishing cycles for regular holonomic $\Dmod$-modules. It is due to Malgrange
and Kashiwara, and involves the use of an additional filtration
called the \define{Kashiwara-Malgrange filtration}. Such a filtration only exists
when the divisor $X_0$ is smooth; in general, one uses the graph embedding to reduce
to this situation.

Let us first consider the case where we have a smooth function $t \colon X \to \CC$
and a global vector field $\partial_t$ such that $\lbrack \partial_t, t \rbrack = 1$.
Then the \define{Kashiwara-Malgrange filtration} on a right $\Dmod$-module $\Mmod$ is
an increasing filtration $\VKM_{\bullet} \Mmod$, indexed by $\ZZ$, such that
\begin{enumerate}
\item each $\VKM_k \Mmod$ is coherent over $V_0 \Dmod_X = \menge{P \in \Dmod_X}{P \cdot
I_{X_0} \subseteq I_{X_0}}$,
\item $\VKM_k \Mmod \cdot t \subseteq \VKM_{k-1} \Mmod$ and $\VKM_k \Mmod \cdot \partial_t
\subseteq \VKM_{k+1} \Mmod$,
\item $\VKM_k \Mmod \cdot t = \VKM_{k-1} \Mmod$ for $k \ll 0$,
\item all eigenvalues of $t \partial_t$ on $\gr_k^{\VKM} \Mmod = \VKM_k \Mmod /
\VKM_{k-1} \Mmod$ have real part in $(k-1, k \rbrack$.
\end{enumerate}

Kashiwara proved that a filtration with the above properties exists and is unique
provided that $\Mmod$ is holonomic \cite{Kashiwara-V}. One can also show that when
$\Mmod$ is (regular) holonomic, all the graded quotients $\gr_k^{\VKM} \Mmod$ are
again (regular) holonomic $\Dmod$-modules on the complex submanifold $t^{-1}(0) \subseteq X$.

\begin{exercise} 
Prove the following results about the Kashiwara-Malgrange filtration:
\begin{aenumerate}
\item $t \colon \VKM_k \Mmod \to \VKM_{k-1} \Mmod$ is an isomorphism for $k < 0$.
\item $\partial_t \colon \gr_k^{\VKM} \Mmod \to \gr_{k+1}^{\VKM} \Mmod$ is an
isomorphism for $k \neq -1$.
\item $\Mmod$ is generated, as a $\Dmod_X$-module, by the subsheaf  $\VKM_0 \Mmod$.
\end{aenumerate}
\end{exercise}

\begin{exercise}
Calculate the Kashiwara-Malgrange filtration when $\Supp \Mmod \subseteq t^{-1}(0)$.
\end{exercise}

\begin{exercise}
Prove the uniqueness of the Kashiwara-Malgrange filtration by showing that there can
be at most one filtration $\VKM_{\bullet} \Mmod$ with the above properties.
\end{exercise}

\begin{exercise}
Suppose that $\Mmod$ is the right $\Dmod$-module defined by a vector bundle with
integrable connection. Calculate the Kashiwara-Malgrange filtration, and
show that $\gr_{-1}^{\VKM} \Mmod$ is isomorphic to the restriction of $\Mmod$ to the
submanifold $t^{-1}(0)$.
\end{exercise}

Now suppose that $f \colon X \to \CC$ is an arbitrary non-constant holomorphic
function. To reduce to the smooth case, we consider the graph embedding 
\[
	(\id, f) \colon X \into X \times \CC, \qquad x \mapsto \bigl( x, f(x) \bigr).
\]
Let $t$ be the coordinate on $\CC$, and set $\partial_t = \partial/\partial t$; note
that $t \circ (\id, f) = f$. Instead of the original $\Dmod$-module $\Mmod$, we consider
the direct image
\[
	\Mmodf = (\id, f)_{+} \Mmod = \Mmod \lbrack \partial_t \rbrack
\]
on $X \times \CC$; here the action by $\partial_t$ is the obvious one, and the action
by $t$ is defined by using the relation $\lie{\partial_t}{t} = 1$. Since $\Mmod$ and
$\Mmodf$ uniquely determine each other, it makes sense to consider the
Kashiwara-Malgrange filtration on $\Mmodf$ with respect to the smooth function $t$. 
When $\Mmod$ is regular holonomic, the Kashiwara-Malgrange filtration for $\Mmodf$
exists, and the graded quotients $\gr_k^{\VKM} \Mmodf$ are again regular holonomic
$\Dmod$-modules on $X = t^{-1}(0)$ whose support is contained in the original divisor
$X_0 = f^{-1}(0)$. 

The regular holonomic $\Dmod$-modules $\gr_k^{\VKM} \Mmodf$ are related to the
nearby and vanishing cycles for the perverse sheaf $\DR(\Mmod)$ in the following
manner. For any complex number $\alpha \in \CC$, let us define
\[
	\Mmodfal{\alpha} = \ker (t \partial_t - \alpha \id)^m, \qquad \text{$m \gg 0$,}
\]
as the generalized eigenspace with eigenvalue $\alpha$ for the action of $t
\partial_t$ on $\gr_k^{\VKM} \Mmodf$, where $k$ is the unique integer with $k-1 < \Re
\alpha \leq k$.  Then for $\lambda = e^{2\pi i \alpha}$, one has
\begin{equation} \label{eq:Mmodfal}
\begin{split}
	\DR \bigl( \Mmodfal{\alpha} \bigr) & \simeq
		\ppsi_{f, \lambda} \bigl( \DR(\Mmod) \bigr)
		\quad \text{for $-1 \leq \Re \alpha < 0$}, \\
	\DR \bigl( \Mmodfal{\alpha} \bigr) & \simeq
		\pphi_{f, \lambda} \bigl( \DR(\Mmod) \bigr)
		\quad \text{for $-1 < \Re \alpha \leq 0$},
\end{split}
\end{equation}
and the operator $T$ coming from the monodromy action corresponds to 
\[
	e^{2\pi i t \partial_t} 
	= e^{2 \pi i \alpha} \cdot e^{2 \pi i(t \partial_t - \alpha)}
	= \lambda \cdot e^{2 \pi i(t \partial_t - \alpha)}
\]
under this isomorphism. In the same way, the two morphisms
\[
	\can \colon \ppsi_{f,1} K \to \pphi_{f,1} K \quad \text{and} \quad
		\var \colon \pphi_{f,1} K \to \pphi_{f,1} K(-1)
\]
correspond, on the level of $\Dmod$-modules, to
\[
	\partial_t \colon \Mmodfal{-1} \to \Mmodfal{0} \quad \text{and} \quad 
		t \colon \Mmodfal{0} \to \Mmodfal{-1};
\]
the composition $N = \var \circ \can$ is therefore represented by $\partial_t t = t
\partial_t + 1$ (since we are using right $\Dmod$-modules).

\begin{exercise}
Let $(\shV, \nabla)$ be a flat bundle on the punctured disk $\dst$, and let $\Mmod$
be the right $\Dmod$-module corresponding to the Deligne's canonical meromorphic
extension. Show that the Kashiwara-Malgrange filtration with respect to the
coordinate $t$ on $\Delta$ contains exactly the same information as Deligne's
canonical lattices for $(\shV, \nabla)$.
\end{exercise}

\subsection{Nearby and vanishing cycles in Saito's theory} 

For reasons that will become clear in a moment, Saito does not directly use the
Kashiwara-Malgrange filtration; instead, he uses a refined notion that works better
for filtered $\Dmod$-modules. Let $M = (\Mmod, F_{\bullet} \Mmod, K)$ be a 
filtered regular holonomic $\Dmod$-module with $\QQ$-structure on a complex manifold
$X$. Given a non-constant holomorphic function $f \colon X \to \CC$, we would like to
define the nearby cycles $\psi_f M$ and the vanishing cycles $\phi_f M$ in the
category of filtered regular holonomic $\Dmod$-modules with $\QQ$-structure. 

As before, we use the graph embedding $(\id, f) \colon X \into X \times \CC$ and
replace the original filtered $\Dmod$-module $(\Mmod, F_{\bullet} \Mmod)$ by its
direct image 
\begin{equation} \label{eq:def-Mf}
\begin{split}
	\Mmodf &= (\id, f)_{+} \Mmod = \Mmod \lbrack \partial_t \rbrack, \\
	F_{\bullet} \Mmodf &= F_{\bullet} (\id, f)_{+} \Mmod
		= \bigoplus_{i = 0}^{\infty} F_{\bullet-i} \Mmod \tensor \partial_t^i.
\end{split}
\end{equation}
Let $\VKM_{\bullet} \Mmodf$ denote the Kashiwara-Malgrange filtration with respect to
$t = 0$.

Following Saito, we shall only consider objects with quasi-unipotent local monodromy;
this assumption comes from the theory of variations of Hodge structure
\cite{Schmid}. In our situation, it means that all eigenvalues of the monodromy
operator $T$ on $\ppsi_f K$ are roots of unity.%
\footnote{Saito makes this assumption in \cite{Saito-HM,Saito-MHM}, but not in
certain other papers \cite{Saito-Kaehler}.}
Then all eigenvalues of $t \partial_t$ on $\gr_0^{\VKM} \Mmodf$ are rational
numbers, and one can introduce a refined filtration $V_{\bullet} \Mmodf$ indexed by
$\QQ$. Given $\alpha \in \QQ$, we let $k = \lceil \alpha \rceil$, and define
$V_{\alpha} \Mmodf \subseteq \VKM_k \Mmodf$ as the preimage of 
\[
	\bigoplus_{k-1< \beta \leq \alpha} \Mmodfal{\beta}
		\subseteq \gr_k^{\VKM} \Mmodf
\]
under the projection $\VKM_k \Mmodf \to \gr_k^{\VKM} \Mmodf$; we define $V_{<\alpha}
\Mmodf$ in the same way, but taking the direct sum over $k-1 < \beta < \alpha$.
The resulting filtration $V_{\bullet} \Mmodf$ is called the \define{(rational)
$V\!$-filtration}, to distinguish it from the Kashiwara-Malgrange filtration.

\begin{exercise} \label{ex:V-refined}
Prove the following assertions about the $V\!$-filtration:
\begin{aenumerate}
\item The operator $t \partial_t - \alpha \id$ acts nilpotently on $\gr_{\alpha}^V
\Mmod = V_{\alpha} \Mmod \big/ V_{<\alpha} \Mmod$.
\item $t \colon V_{\alpha} \Mmod \to V_{\alpha-1} \Mmod$ is an isomorphism
for $\alpha < 0$.
\item $\partial_t \colon \gr_{\alpha}^V \Mmod \to \gr_{\alpha+1}^V \Mmod$ is an
isomorphism for $\alpha \neq -1$.
\item $V_{<0} \Mmod$ only depends on the restriction of $\Mmod$ to $X \setminus X_0$.
\end{aenumerate}
\end{exercise}

\begin{note}
More generally, one can consider the case where the eigenvalues of $T$ on $\ppsi_f
K$ have absolute value $1$; the $V\!$-filtration is then naturally indexed by $\RR$.
\end{note}

The correct choice of filtration on the nearby and vanishing cycles is a little bit
subtle. Let me explain what the issue is. The formulas in \eqref{eq:Mmodfal},
together with the obvious isomorphisms $\gr_{\alpha}^V \Mmodf \simeq
\Mmodfal{\alpha}$, yield 
\begin{align*}
	\DR \left( \bigoplus_{-1 \leq \alpha < 0} 
		\gr_{\alpha}^V \Mmodf \right) & \simeq \CC \tensor_{\QQ} \ppsi_f K, \\
	\DR \left( \bigoplus_{-1 < \alpha \leq 0} 
		\gr_{\alpha}^V \Mmodf \right) & \simeq \CC \tensor_{\QQ} \pphi_f K,
\end{align*}
and the two $\Dmod$-modules on the left-hand side are regular holonomic. To make them
into filtered $\Dmod$-modules, we should use a filtration that is compatible with the
decompositions above. There are two reasons for this: (1) In the case of a polarizable
variation of Hodge structure on a punctured disk, the correct choice of filtration on
the nearby cycles is the limit Hodge filtration; according to Schmid's nilpotent
orbit theorem \cite{Schmid}, this filtration is compatible with the decomposition
into generalized eigenspaces. (2) On a more formal level, it turns out that certain
properties of $M$ can be described very naturally in terms of the filtration on the
individual $\Dmod$-modules $\gr_{\alpha}^V \Mmodf$; we shall revisit this point below.

The upshot is that we should endow each $\Dmod$-module $\gr_{\alpha}^V \Mmodf$ with
the filtration induced by $F_{\bullet} \Mmodf$; concretely, for $p \in \ZZ$, we set
\[
	F_p \gr_{\alpha}^V \Mmodf 
		= \frac{F_p \Mmodf \cap V_{\alpha} \Mmodf}{F_p \Mmodf \cap V_{<\alpha} \Mmodf}.
\]
Note that this leads to a different filtration on the $\Dmod$-module
\[
	\gr_0^{\VKM} \Mmodf \simeq \bigoplus_{-1 < \alpha \leq 0} \gr_{\alpha}^V \Mmod 
\]
than if we simply took the filtration induced by $F_{\bullet} \Mmodf$; while this
choice might seem more natural, it would be wrong from the point of
view of Hodge theory.

For any non-constant holomorphic function $f \colon X \to \CC$, we can now set
\begin{equation} 
\begin{split}
	\psi_f M &= \bigoplus_{-1 \leq \alpha < 0}
		\Bigl( \gr_{\alpha}^V \Mmodf, F_{\bullet-1} \gr_{\alpha}^V \Mmodf, 
			\ppsi_{f, e^{2 \pi i \alpha}} K \Bigr) \\
	\psi_{f,1} M &= 
		\Bigl( \gr_{-1}^V \Mmodf, F_{\bullet-1} \gr_{-1}^V \Mmodf, 
			\ppsi_{f, 1} K \Bigr) \\
	\phi_{f,1} M &= 
		\Bigl( \gr_0^V \Mmodf, F_{\bullet} \gr_0^V \Mmodf, \pphi_{f,1} K \Bigr)
\end{split}
\end{equation}
Except for $\lambda = 1$, the individual perverse sheaves $\ppsi_{f, \lambda} K$ are
generally not defined over $\QQ$; in order to have a $\QQ$-structure on
$\psi_f M$, we are forced to keep them together.  Provided that the induced
filtration $F_{\bullet} \gr_{\alpha}^V \Mmodf$ is good for every $\alpha \in [-1,
0]$, all three objects are filtered regular holonomic $\Dmod$-modules with
$\QQ$-structure on $X$; by construction, their support is contained in the divisor
$f^{-1}(0)$. The logarithm of the unipotent part of the monodromy 
\[
	N = \frac{1}{2\pi i} \log T_u \colon \psi_f M \to \psi_f M(-1)
\]
is a nilpotent endomorphism (up to a Tate twist).

\subsection{Decomposition by strict support}

Now we can start thinking about the definition of Hodge modules. Recall that the
category $\HMp{X}{w}$ should be such that every polarizable Hodge module decomposes
into a finite sum
\[
	M \simeq \bigoplus_{Z \subseteq X} M_Z,
\]
where $M_Z$ is supported on an irreducible subvariety $Z$, and in fact comes from a
variation of Hodge structure of weight $w - \dim Z$ on a Zariski-open subset of the
smooth locus of $Z$. On the level of perverse sheaves, this means that $K_Z$ should
be an intersection complex, and should therefore not have nontrivial subobjects or
quotient objects that are supported on proper subvarieties of $Z$. The following
definition captures this property in the case of $\Dmod$-modules.

\begin{definition}
Let $Z \subseteq X$ be an irreducible subvariety. We say that a $\Dmod$-module
$\Mmod$ has \define{strict support} $Z$ if the support of every nonzero subobject
or quotient object of $\Mmod$ is equal to $Z$.
\end{definition}

Note that a regular holonomic $\Dmod$-module has strict support $Z$ if and only if
the corresponding perverse sheaf $\DR(\Mmod)$ is the intersection complex of a local
system on a dense Zariski-open subset of $Z$. It turns out that this property can
also be detected with the help of the $V\!$-filtration.  

\begin{exercise} \label{ex:SS}
Let $f \colon X \to \CC$ be a non-constant holomorphic function, and let $\Mmod$ be a
regular holonomic $\Dmod$-module on $X$.
\begin{aenumerate}
\item Show that $\Mmod$ has no nonzero subobject supported on $f^{-1}(0)$ if and only
if $t \colon \gr_0^V \Mmodf \to \gr_{-1}^V \Mmodf$ is injective.
\item Show that $\Mmod$ has no nonzero quotient object supported on $f^{-1}(0)$ if
and only if $\partial_t \colon \gr_{-1}^V \Mmodf \to \gr_0^V \Mmodf$ is surjective.
\end{aenumerate}
\end{exercise}

More generally, one can use the $V\!$-filtration to test whether or not $\Mmod$
decomposes into a direct sum of $\Dmod$-modules with strict support.

\begin{exercise}
Let $f \colon X \to \CC$ be a non-constant holomorphic function, and let $\Mmod$ be a
regular holonomic $\Dmod$-module on $X$. Show that
\begin{equation} \label{eq:SS-test}
	\gr_0^V \Mmodf = \ker \bigl( t \colon \gr_0^V \Mmodf \to \gr_{-1}^V \Mmodf \bigr)
		\oplus \im \bigl( \partial_t \colon \gr_{-1}^V \Mmodf \to \gr_0^V \Mmodf \bigr)
\end{equation}
if and only if $\Mmod \simeq \Mmod' \oplus \Mmod''$, where $\Supp \Mmod' \subseteq
f^{-1}(0)$, and $\Mmod''$ does not have nonzero subobjects or quotient objects whose
support is contained in $f^{-1}(0)$.
\end{exercise}

Using the result of the preceding exercise, one can easily prove the following local
criterion for the existence of a decomposition by strict support. Note that such a
decomposition is necessarily unique, because there are no nontrivial morphisms
between $\Dmod$-modules with different strict support. 

\begin{proposition} \label{prop:SS-criterion}
Let $\Mmod$ be a regular holonomic $\Dmod$-module on $X$. Then $\Mmod$ decomposes
into a direct sum of $\Dmod$-modules with strict support if and only if
\eqref{eq:SS-test} is true for every $f$.
\end{proposition}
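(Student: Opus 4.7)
The forward direction ($\Rightarrow$) is immediate from the previous exercise. Suppose $\Mmod = \bigoplus_Z \Mmod_Z$ is a decomposition by strict support, and fix a non-constant holomorphic $f \colon X \to \CC$. Partition the summands and set
\[
    \Mmod' = \bigoplus_{Z \subseteq f^{-1}(0)} \Mmod_Z, \qquad
    \Mmod'' = \bigoplus_{Z \not\subseteq f^{-1}(0)} \Mmod_Z.
\]
Then $\Supp \Mmod' \subseteq f^{-1}(0)$. Any nonzero sub- or quotient object of $\Mmod''$ supported in $f^{-1}(0)$ would project onto a nonzero sub- or quotient of some $\Mmod_Z$ with $Z \not\subseteq f^{-1}(0)$, whose support would then lie in $Z \cap f^{-1}(0) \subsetneq Z$, contradicting the strict support of $\Mmod_Z$. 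The previous exercise now yields \eqref{eq:SS-test}.

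For the reverse direction ($\Leftarrow$), I argue by Noetherian induction on $T = \Supp \Mmod$. The base case in which $T$ is a finite set of points is immediate, as $\Mmod$ splits over its support via Kashiwara's equivalence. For the inductive step, pick a top-dimensional irreducible component $Z$ of $T$ and let $W = T \setminus Z^\circ$, where $Z^\circ \subseteq Z$ is the open subset lying outside all other components of $T$. Cover $X$ by open sets $U_\alpha$ on each of which one can choose a holomorphic function $f_\alpha$ vanishing on $W \cap U_\alpha$ but not on $Z \cap U_\alpha$. Applying \eqref{eq:SS-test} (which holds on each $U_\alpha$ since it holds on $X$) produces local decompositions $\Mmod|_{U_\alpha} = \Mmod'_\alpha \oplus \Mmod''_\alpha$, where $\Mmod'_\alpha$ is the maximal subobject of $\Mmod|_{U_\alpha}$ with support in $f_\alpha^{-1}(0)$, and $\Mmod''_\alpha$ has neither sub- nor quotient object supported in $f_\alpha^{-1}(0)$. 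In particular, every irreducible component of $\Supp \Mmod''_\alpha$ meets $U_\alpha \setminus f_\alpha^{-1}(0)$, so $\Supp \Mmod''_\alpha \cap U_\alpha \subseteq Z$.

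The heart of the argument is the gluing step: the local summands must agree on overlaps $U_\alpha \cap U_\beta$, so that they assemble into a global direct summand $\Mmod^{(Z)}$ of strict support $Z$. The key point is that $\Mmod'_\alpha$ admits an intrinsic description in terms of the $V$-filtration — namely, via Kashiwara's equivalence it is constructed from $\ker\!\bigl( t \colon \gr_0^V \Mmod_{f_\alpha} \to \gr_{-1}^V \Mmod_{f_\alpha} \bigr)$, which under the additional hypothesis that $f_\alpha$ vanish on $W \cap U_\alpha$ is canonically the largest subobject of $\Mmod|_{U_\alpha}$ whose generic behavior along $Z$ is trivial. This characterization is independent of the auxiliary choice of $f_\alpha$, so the local summands glue to a global splitting $\Mmod = \Mmod^{(Z)} \oplus \Mmod^{(\neq Z)}$ with $\Mmod^{(Z)}$ of strict support $Z$ and $\Supp \Mmod^{(\neq Z)} \subseteq W$. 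Both summands inherit the criterion \eqref{eq:SS-test} for every $f$, because forming $\Mmod_f$ and $\gr_\alpha^V$ commutes with direct sums. Applying the induction hypothesis to $\Mmod^{(\neq Z)}$, whose support has strictly fewer top-dimensional components than $T$, finishes the proof.

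The main obstacle is precisely the gluing step — verifying that the $\Mmod'_\alpha$ depend only on $\Mmod|_{U_\alpha}$ and the subvariety $W$, rather than on the auxiliary function $f_\alpha$. Once this canonical (hence gluable) nature of the local summand is established through the $V$-filtration, the rest of the argument is routine Noetherian descent.
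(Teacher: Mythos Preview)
Your forward direction is fine. The reverse direction has a genuine gap, and it is exactly where you suspect.

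The local summand $\Mmod''_\alpha$ is, by the preceding exercise, the kernel of the maximal quotient of $\Mmod|_{U_\alpha}$ supported in $f_\alpha^{-1}(0)$; equivalently, $\Mmod'_\alpha = \Gamma_{f_\alpha^{-1}(0)}\Mmod|_{U_\alpha}$ is the maximal submodule supported there. Both depend on $f_\alpha^{-1}(0)\cap\Supp\Mmod$, which is $W\cup\bigl(f_\alpha^{-1}(0)\cap Z\bigr)$, and the second piece genuinely varies with $f_\alpha$. Your $V$-filtration argument does not help: the kernel of $t$ on $\gr_0^V\Mmod_{f_\alpha}$ recovers precisely $\Gamma_{f_\alpha^{-1}(0)}\Mmod|_{U_\alpha}$, so it is no more intrinsic than the splitting itself. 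Concretely, on an overlap the projection $\Mmod''_\alpha \hookrightarrow \Mmod \twoheadrightarrow \Mmod'_\beta$ is a map from a module with support $Z$ to one with support in $f_\beta^{-1}(0)$; its image is a quotient of $\Mmod''_\alpha$ supported in $Z\cap f_\beta^{-1}(0)$, which need not lie in $f_\alpha^{-1}(0)$, so you cannot conclude it vanishes. For the same reason, even if your $\Mmod^{(Z)}$ existed it would only be known to have no sub- or quotient object supported in the particular divisors $f_\alpha^{-1}(0)\cap Z$, not in \emph{every} proper closed subset of $Z$; the asserted strict support is therefore also unjustified.

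The clean fix is to postpone gluing until you have the \emph{full} local decomposition by strict support. Work on a small open set and induct on the length of $\Mmod$ (finite, since $\Mmod$ is holonomic): if $\Mmod$ does not already have strict support, choose $f$ so that the exercise produces a splitting into two \emph{nonzero} summands, each of strictly smaller length and each inheriting \eqref{eq:SS-test}; recurse. This yields, locally, a decomposition by strict support. Such a decomposition is \emph{unique} --- the paper notes this immediately before the proposition, since there are no nonzero morphisms between $\Dmod$-modules with different strict support --- and uniqueness forces the local decompositions to agree on overlaps, so they glue.
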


This is the first indication that nearby and vanishing cycle functors are useful in
describing global properties of $\Dmod$-modules in terms of local conditions.

\subsection{Compatibility with the filtration}

Let $(\Mmod, F_{\bullet} \Mmod, K)$ be a filtered regular holonomic $\Dmod$-module
with $\QQ$-structure on a complex manifold $X$. Saito realized that the nearby and
vanishing cycle functors are also good for imposing restrictions on the filtration
$F_{\bullet} \Mmod$. To see why this might be the case, let us suppose for a moment
that $\Mmod$ has strict support $Z \subseteq X$. Let $f \colon X \to \CC$ be a
holomorphic function whose restriction to $Z$ is not constant; the most interesting
case is when $X_0$ contains the singular locus of $\Mmod$, or in other words,
when $K$ is the intersection complex of a local system on $Z \setminus Z \cap X_0$. 

To simplify the notation, suppose that we are actually dealing with a smooth function
$t \colon X \to \CC$, and that there is a global vector field $\partial_t$ with
$\lbrack \partial_t, t \rbrack = 1$. Clearly, we can always put ourselves into this
situation by considering the $\Dmod$-module $\Mmodf$ on the product $X \times \CC$:
it will have strict support $i_f(Z)$, and the two filtrations $F_{\bullet} \Mmod$ and
$F_{\bullet} \Mmodf$ determine each other by \eqref{eq:def-Mf}. 
We know from \exerciseref{ex:SS} that
\begin{align*}
	t &\colon \gr_0^V \Mmod \to \gr_{-1}^V \Mmod \quad \text{is injective,} \\
	\partial_t &\colon \gr_{-1}^V \Mmod \to \gr_0^V \Mmod \quad \text{is surjective.}
\end{align*}
Because of the properties of the $V\!$-filtration, one then has
\[
	\Mmod = \sum_{i = 0}^{\infty} \bigl( V_{<0} \Mmod \bigr) \partial_t^i,
\]
and so $V_{<0} \Mmod$ generates $\Mmod$ as a right $\Dmod$-module; recall from
\exerciseref{ex:V-refined} that this
is the part of the $V\!$-filtration that only depends on the restriction of $\Mmod$ to
$X \setminus X_0$. We would like to make sure that the filtration $F_{\bullet} \Mmod$
is also determined by its restriction to $X \setminus X_0$. First, we need a
condition that allows us to recover the coherent sheaves
\[
	F_p V_{<0} \Mmod = V_{<0} \Mmod \cap F_p \Mmod.
\]
from information on $X \setminus X_0$.

\begin{exercise}
Denote by $j \colon X \setminus X_0 \into X$ the inclusion. Show that one has
\[
	F_p V_{<0} \Mmod = V_{<0} \Mmod \cap \jl \ju F_p \Mmod
\]
if and only if $t \colon F_p V_{\alpha} \Mmod \to F_p V_{\alpha - 1} \Mmod$ is
surjective for every $\alpha < 0$.
\end{exercise}

Next, we observe that $F_p \Mmod$ contains $\bigl( F_{p-i} V_{<0} \Mmod \bigr)
\partial_t^i$ for every $i \geq 0$. The following exercise gives a criterion for when
these subsheaves generate $F_p \Mmod$.

\begin{exercise}
Suppose $\partial_t \colon \gr_{-1}^V \Mmod \to \gr_0^V \Mmod$ is surjective. Show
that one has
\[
	F_p \Mmod = \sum_{i = 0}^{\infty} \bigl( F_{p-i} V_{<0} \Mmod \bigr) \partial_t^i
\] 
if and only if $\partial_t \colon F_p \gr_{\alpha}^V \Mmod \to F_{p+1} \gr_{\alpha+1}^V
\Mmod$ is surjective for every $\alpha \geq -1$.  
\end{exercise}

This criterion is one reason for considering the induced filtration on each
$\gr_{\alpha}^V \Mmod$. Almost exactly the same condition also turns out to be useful
for describing the filtration in the case where the support of $\Mmod$ is contained
in the divisor $X_0$.

\begin{exercise}
Suppose that $\Supp \Mmod \subseteq X_0$, and define a filtered $\Dmod$-module on
$X_0$ by setting $\Mmod_0 = \ker(t \colon \Mmod \to \Mmod)$ and $F_p \Mmod_0 = F_p
\Mmod \cap \Mmod_0$. Show that 
\[
	\Mmod \simeq \Mmod_0 \lbrack \partial_t \rbrack \quad \text{and} \quad
		F_p \Mmod \simeq \bigoplus_{i = 0}^{\infty} F_{p-i} \Mmod_0 \tensor \partial_t^i
\]
if and only if $\partial_t \colon F_p \gr_{\alpha}^V \Mmod \to F_{p+1} \gr_{\alpha+1}^V
\Mmod$ is surjective for every $\alpha > -1$.  
\end{exercise}

We now return to the case of an arbitrary filtered regular holonomic $\Dmod$-module
$(\Mmod, F_{\bullet} \Mmod)$. Let $f \colon X \to \CC$ be a non-constant holomorphic
function. The three exercises above motivate the following definition.

\begin{definition} \label{def:strict-spec}
We say that $(\Mmod, F_{\bullet} \Mmod)$ is \define{quasi-unipotent along $f = 0$} if
all eigenvalues of the monodromy operator on $\ppsi_f K$ are roots of unity, and if 
the $V\!$-filtration $V_{\bullet} \Mmodf$ satisfies the following two
additional conditions:
\begin{nenumerate}
\item 
$t \colon F_p V_{\alpha} \Mmodf \to F_p V_{\alpha-1} \Mmodf$ is surjective for
$\alpha < 0$.
\item
$\partial_t \colon F_p \gr_{\alpha}^V \Mmodf \to F_{p+1} \gr_{\alpha+1}^V \Mmodf$
is surjective for $\alpha > -1$.
\end{nenumerate}
We say that $(\Mmod, F_{\bullet} \Mmod)$ is \define{regular along $f=0$} if
$F_{\bullet} \gr_{\alpha}^V \Mmodf$ is a good filtration for every $-1 \leq \alpha \leq 0$.
\end{definition}

The properties of the $V\!$-filtration guarantee that the two morphisms in the
definition are isomorphisms in the given range. Note that we do not include $\alpha =
-1$ on the second line, because we want a notion that also makes sense when
$\partial_t \colon \gr_{-1}^V \Mmodf \to \gr_0^V \Mmodf$ is not surjective. The
regularity condition ensures that $\psi_f M$ and $\phi_{f,1} M$ are filtered regular
holonomic $\Dmod$-modules with $\QQ$-structure.

To connect the definition with the discussion above, suppose that $(\Mmod,
F_{\bullet} \Mmod)$ has strict support $Z$, and that it is quasi-unipotent and
regular along $f = 0$ for a holomorphic function $f \colon X \to \CC$ whose
restriction to $Z$ is not constant. Then
\begin{equation} \label{eq:F-formula}
	F_p \Mmodf = \sum_{i = 0}^{\infty} 
		\bigl( V_{<0} \Mmodf \cap \jl \ju F_{p-i} \Mmodf \bigr) \partial_t^i,
\end{equation}
provided that $\partial_t \colon F_p \gr_{-1}^V \Mmodf \to F_{p+1} \gr_0^V \Mmodf$ is
also surjective. This last condition will be automatically satisfied for Hodge
modules: the recursive definition (in \parref{par:def-HM}) implies that the morphism
\[
	\partial_t \colon \gr_{-1}^V \Mmodf \to \gr_0^V \Mmodf
\]
is strictly compatible with the filtrations (up to a shift by $1$), and
\eqref{eq:F-formula} therefore follows from the surjectivity of $\partial_t$. What
this means is that the filtered $\Dmod$-module $(\Mmod, F_{\bullet} \Mmod)$ is
uniquely determined by its restriction to $Z \setminus Z \cap X_0$.

Another good feature of the conditions in \definitionref{def:strict-spec} is that
they say something interesting even when the support of $\Mmod$ is contained in the
divisor $f^{-1}(0)$.

\begin{exercise} \label{ex:support}
Suppose that $\Supp \Mmod \subseteq f^{-1}(0)$. Show that $(\Mmod, F_{\bullet}
\Mmod)$ is quasi-unipotent and regular along $f = 0$ if and only if
the filtration satisfies 
\[
	(F_p \Mmod) \cdot f \subseteq F_{p-1} \Mmod
\]
for every $p \in \ZZ$.
\end{exercise}

We can also upgrade the criterion in \propositionref{prop:SS-criterion} to filtered
regular holonomic $\Dmod$-modules with $\QQ$-structure. In the statement, note that
the filtration on the image of $\can \colon \psi_{f,1} M \to \phi_{f,1} M$ is induced
by that on $\psi_{f,1} M$.  

\begin{theorem} \label{thm:SS-criterion}
Let $M$ be a filtered regular holonomic $\Dmod$-module with $\QQ$-structure, and
suppose that $(\Mmod, F_{\bullet} \Mmod)$ is quasi-unipotent and regular along
$f = 0$ for every locally defined holomorphic function $f$. Then $M$ admits a
decomposition 
\[
	M \simeq \bigoplus_{Z \subseteq X} M_Z
\]
by strict support, in which each $M_Z$ is again a filtered regular holonomic
$\Dmod$-module with $\QQ$-structure, if and only if one has
\[
	\phi_{f,1} M = \ker \bigl( \var \colon \phi_{f,1} M \to \psi_{f,1} M(-1) \bigr)
		\oplus \im \bigl( \can \colon \psi_{f,1} M \to \phi_{f,1} M \bigr)
\]
for every locally defined holomorphic function $f$.
\end{theorem}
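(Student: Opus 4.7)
The plan is to reduce to the underlying unfiltered $\Dmod$-module level, where \propositionref{prop:SS-criterion} applies, and then upgrade the resulting splitting to the filtered/$\QQ$-structure setting using the hypotheses of quasi-unipotence and regularity.

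Using the dictionary from \parref{par:review-cycles}, under which $\psi_{f,1} M$ corresponds to $\gr_{-1}^V \Mmodf$ and $\phi_{f,1} M$ to $\gr_0^V \Mmodf$, with $\can \leftrightarrow \partial_t$ and $\var \leftrightarrow t$, the condition on $\phi_{f,1} M$ becomes precisely \eqref{eq:SS-test} for the underlying $\Dmod$-module $\Mmod$, together with compatibility with the induced filtration. Applied locally for every $f$, \propositionref{prop:SS-criterion} therefore yields a strict support decomposition $\Mmod = \bigoplus_Z \Mmod_Z$ of the underlying regular holonomic $\Dmod$-module, unique because there are no nontrivial morphisms between $\Dmod$-modules with distinct strict supports. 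The $\QQ$-structure $K$ inherits a parallel decomposition $K = \bigoplus_Z K_Z$ through the isomorphism $\DR(\Mmod) \simeq \CC \tensor_{\QQ} K$, since perverse sheaves admit an analogous unique decomposition by strict support.

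For the forward direction, suppose $M = \bigoplus_Z M_Z$. Fix a locally defined $f$ and consider each summand. If $Z \subseteq f^{-1}(0)$, then $\psi_{f,1} M_Z = 0$, so $\ker(\var) = \phi_{f,1} M_Z$ and $\im(\can) = 0$, and the decomposition is trivial. If $Z \not\subseteq f^{-1}(0)$, then strict support $Z$ combined with \exerciseref{ex:SS} forces $\var \leftrightarrow t$ to be injective and $\can \leftrightarrow \partial_t$ to be surjective, so $\ker(\var) = 0$ and $\im(\can) = \phi_{f,1} M_Z$. Either way the required decomposition holds on each summand, hence on $M$.

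The substance of the reverse direction, and the main obstacle, is showing that the Hodge filtration is compatible with the $\Dmod$-module decomposition: $F_p \Mmod = \bigoplus_Z \bigl( F_p \Mmod \cap \Mmod_Z \bigr)$, with each $F_{\bullet} \Mmod_Z$ again good and with each $M_Z$ itself quasi-unipotent and regular along every local $f$. The plan is to propagate the splitting through the $V\!$-filtration. By hypothesis, the decomposition of $\gr_0^V \Mmodf$ respects the induced filtration for every $f$; combined with the surjectivity and injectivity assertions in \definitionref{def:strict-spec}, this shows that $V_{\alpha} \Mmodf$, $F_p V_{\alpha} \Mmodf$, and $F_p \gr_{\alpha}^V \Mmodf$ all split along the strict-support decomposition. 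For summands whose strict support is not contained in $f^{-1}(0)$, formula \eqref{eq:F-formula} then recovers the restriction of $F_p \Mmodf$ to that summand from $V_{<0}$-data and iterated $\partial_t$-action; for summands supported in $f^{-1}(0)$, \exerciseref{ex:support} yields the analogous description in terms of $\ker(t)$. Running this argument with sufficiently many local functions $f$ to separate strata reconstructs $F_{\bullet} \Mmod$ as a direct sum of the $F_{\bullet} \Mmod_Z$. Finally, goodness of each $F_{\bullet} \Mmod_Z$ is inherited from that of $F_{\bullet} \Mmod$ as a direct summand, and the quasi-unipotence and regularity conditions for each $M_Z$ follow from those for $M$ because every operation involved commutes with direct sums.
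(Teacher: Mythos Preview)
The paper does not give a proof of this theorem; it is stated as the filtered-with-$\QQ$-structure upgrade of \propositionref{prop:SS-criterion}, so your overall strategy---reduce to the unfiltered case and then check that the filtration and $\QQ$-structure split compatibly---is exactly what is intended.

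There is, however, a genuine gap in your forward direction. For a summand $M_Z$ with $Z \not\subseteq f^{-1}(0)$, you assert that $\im(\can) = \phi_{f,1} M_Z$. At the $\Dmod$-module level this is correct (\exerciseref{ex:SS} gives surjectivity of $\partial_t$), but the filtered statement requires that $\partial_t \colon F_{p-1} \gr_{-1}^V (\Mmod_Z)_f \to F_p \gr_0^V (\Mmod_Z)_f$ be surjective, i.e.\ that $\can$ is strict. This is the case $\alpha = -1$, which is explicitly \emph{excluded} from the quasi-unipotence condition in \definitionref{def:strict-spec}; the paper flags this immediately after \eqref{eq:F-formula}, noting that the $\alpha = -1$ surjectivity is an additional condition (automatic for Hodge modules, but not in general). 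Since the theorem is about arbitrary filtered regular holonomic $\Dmod$-modules with $\QQ$-structure satisfying only quasi-unipotence and regularity, you cannot simply invoke it. You must either argue separately that it holds for a strict-support summand of a quasi-unipotent object, or recognize that the filtered equality $\phi_{f,1} M = \ker(\var) \oplus \im(\can)$ is genuinely an extra constraint even in the forward direction.

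The same issue resurfaces in your reverse direction: you invoke \eqref{eq:F-formula} for summands with $Z \not\subseteq f^{-1}(0)$, but that formula is valid only when $\partial_t$ is filtered-surjective at $\alpha = -1$. Here you \emph{can} extract this from the hypothesis---the filtered splitting $F_p \gr_0^V \Mmodf = (F_p \gr_0^V \Mmodf \cap \ker t) \oplus \partial_t(F_{p-1} \gr_{-1}^V \Mmodf)$ implies it---but you should say so explicitly rather than folding it into a general appeal to \definitionref{def:strict-spec}. The subsequent claim that $F_p V_\alpha \Mmodf$ splits along the strict-support decomposition for all $\alpha$ also needs more care: you are trying to prove that $F_p \Mmod$ splits, so you cannot use the induced filtration on the summands as if the splitting were already known.
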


\subsection{Definition of pure Hodge modules}
\label{par:def-HM}

After these technical preliminaries, we are now ready to define the category of pure
Hodge modules. The basic objects are filtered regular holonomic $\Dmod$-modules with
$\QQ$-structure; we have to decide when a given $M = (\Mmod, F_{\bullet} \Mmod, K)$
should be called a Hodge module. Saito uses a recursive procedure to define a family
of auxiliary categories 
\[
	\HMs{d}{X}{w} = \left\{ \, \text{\parbox{5.2cm}{Hodge modules of weight $w$ on $X$ \\
			whose support has dimension $\leq d$}} \, \right\},
\]
indexed by $d \geq 0$. This procedure has the advantage that results can then be
proved by induction on the dimension of the support.%
\footnote{Saito says that he found the definition by axiomatizing certain
arguments that he used to prove \theoremref{thm:direct-image} in the case $M = \QQ_X^H
\decal{\dim X}$.}
Since the definition involves the nearby and vanishing cycle functors, we impose the
following condition: 
\begin{equation} \label{eq:HM-def-1}
	\text{\parbox{10cm}{%
		The pair $(\Mmod, F_{\bullet} \Mmod)$ is quasi-unipotent and regular along $f=0$ \\ 
		for every locally defined holomorphic function $f \colon U \to \CC$.}}
\end{equation}
Both $\psi_f M$ and $\phi_{f,1} M$ are then well-defined filtered regular holonomic
$\Dmod$-modules with $\QQ$-structure on $U$, whose support is contained in
$f^{-1}(0)$. Since we are only interested in objects that admit a
decomposition by strict support, we require:
\begin{equation} \label{eq:HM-def-2}
	\text{\parbox{10cm}{$M$ admits a decomposition by strict support,
		in the category \\ of regular holonomic $\Dmod$-modules with $\QQ$-structure.}}
\end{equation}
Recall that this can also be tested with the help of the nearby and vanishing cycle
functors, using the criterion in \theoremref{thm:SS-criterion}.  Now the problem is
reduced to defining, for each irreducible closed subvariety $Z \subseteq X$, a
suitable category
\[
	\HMZ{Z}{X}{w} = \left\{ \, \text{\parbox{5cm}{Hodge modules of weight $w$ on $X$ \\
			with strict support equal to $Z$}} \, \right\};
\]
we can then take for $\HMs{d}{X}{w}$ the direct sum of all $\HMZ{Z}{X}{w}$ with $\dim
Z \leq d$.

The first case to consider is obviously when $Z = \{x\}$ is a point. Here the
perverse sheaf $K$ is the direct image of a $\QQ$-vector space by the morphism
$i \colon \{x\} \into X$, and this suggests defining
\begin{equation} \label{eq:HM-def-3}
	\HMZ{\{x\}}{X}{w} = \menge{\il H}{%
		\text{$H$ is a $\QQ$-Hodge structure of weight $w$}}.
\end{equation}
Now comes the most important part of the definition: for
arbitrary $Z \subseteq X$, we say that $M \in \HMZ{Z}{X}{w}$ if and only if, for
every locally defined holomorphic function $f \colon U \to \CC$ that does not vanish
identically on $Z \cap U$, one has
\begin{equation} \label{eq:HM-def-4}
	\gr_{\ell}^{W(N)}(\psi_f M) \in \HMs{d-1}{X}{w-1+\ell}.
\end{equation}
Here $W(N)$ is the monodromy filtration of the nilpotent operator $N = (2\pi i)^{-1}
\log T_u$ on the nearby cycles $\psi_f M$. One can deduce from this last condition
that 
\[
	\gr_{\ell}^{W(N)}(\phi_{f,1} M) \in \HMs{d-1}{X}{w+\ell},
\]
by using the isomorphism in \theoremref{thm:SS-criterion}. Of course, the motivation
for using the monodromy filtration comes from Schmid's $\SL(2)$-orbit theorem
\cite{Schmid}: in the case of a polarizable variation of Hodge structure on the
punctured disk, the nearby cycles carry a mixed Hodge structure whose Hodge
filtration is the limit Hodge filtration and whose weight filtration is the monodromy
filtration of $N$, up to a shift by the weight of the variation.

\begin{definition}
The category of \define{Hodge modules of weight $w$} on $X$ has objects
\[
	\HM{X}{w} = \bigcup_{d \geq 0} \HMs{d}{X}{w} 
		= \bigoplus_{Z \subseteq X} \HMZ{Z}{X}{w};
\]
its morphisms are the morphisms of regular holonomic $\Dmod$-modules with
$\QQ$-structure.
\end{definition}

One can show that every morphism between two Hodge modules is strictly compatible
with the filtrations, and that $\HM{X}{w}$ is therefore an abelian category.
Moreover, one can show that there are no nontrivial morphisms from one Hodge module
to another Hodge module of strictly smaller weight.

An important point is that all the conditions in the definition are local; Saito's
insight is that they are strong enough to have global consequences, such as the
decomposition theorem. Of course, the recursive nature of the definition makes it
hard to prove that a given $M$ is a Hodge module: in fact, it takes considerable work
to establish that even a very basic object like
\[
	\bigl( \omX, F_{\bullet} \omX, \QQ_X \decal{n} \bigr)
\]
is a Hodge module of weight $n = \dim X$. It is also not clear that Hodge modules are
stable under various operations such as direct or inverse images -- about the only
thing that is obvious from the definition is that 
\[
	 \HM{pt}{w} = \bigl\{ \text{$\QQ$-Hodge structures of weight $w$} \bigr\}. 
\]
On the positive side, Hodge modules are by definition stable under the application of
nearby and vanishing cycle functors: once we know that something is a Hodge module,
we immediately get a large collection of other Hodge modules by taking nearby and
vanishing cycles.

\begin{example}
Suppose $M$ is a Hodge module on a smooth curve $X$. Then for every local coordinate
$t$, the nearby cycles $\psi_t M$ carry a mixed Hodge structure.
\end{example}

\subsection{Polarizations}

In order to define polarizable Hodge modules, we also need to introduce the concept
of a polarization. Let $M = (\Mmod, F_{\bullet} \Mmod, K)$ be a filtered regular
holonomic $\Dmod$-module with $\QQ$-structure on an $n$-dimensional complex manifold
$X$; of course, we will be mostly interested in the case $M \in \HM{X}{w}$.

Recall that in the case of a variation of Hodge structure $V$ of weight $w-n$, a
polarization is a bilinear form $V \tensor_{\QQ} V \to \QQ(-w+n)$
whose restriction to every point $x \in X$ polarizes the Hodge structure
$V_x$; equivalently, it is a $\QQ$-linear mapping $V(w-n) \to V^{\ast}$ with the same
property. The natural analogue for perverse sheaves is to consider
morphisms of the form
\[
	K(w) \to \DD K,
\]
where $\DD K = \derR \shHom \bigl( K, \QQ_X(n) \bigr) \decal{2n}$ is the Verdier dual
of the perverse sheaf $K$, with a Tate twist to conform to Saito's
conventions for weights. This suggests that a polarization should be a
morphism $M(w) \to \DD M$ \dots\ except that there is no duality functor for
arbitrary filtered $\Dmod$-modules. The holonomic dual
\[
	R^n \shHom_{\Dmod_X} \bigl( \Mmod, \omX \tensor_{\OX} \Dmod_X \bigr)
\]
of $\Mmod$ is a regular holonomic $\Dmod$-module, whose de Rham complex is isomorphic
to $\DD \bigl( \DR(\Mmod) \bigr)$; but this operation does not interact well with the
filtration $F_{\bullet} \Mmod$. In fact, a necessary and sufficient condition is that
$(\Mmod, F_{\bullet} \Mmod)$ is Cohen-Macaulay, meaning that $\gr_{\bullet}^F \!
\Mmod$ should be a Cohen-Macaulay module over $\gr_{\bullet}^F \!  \Dmod_X$.
Fortunately, Saito has shown that Hodge modules always have this property, and that
every $M \in \HM{X}{w}$ has a well-defined dual $\DD M \in \HM{X}{-w}$; see
\parref{par:duality}.

With this issue out of the way, here is the definition. A \define{polarization} on a
Hodge module $M \in \HM{X}{w}$ is a morphism $K(w) \to \DD K$ with the following
properties:
\begin{enumerate}
\item It is nondegenerate and compatible with the filtration, meaning that it extends to
an isomorphism $M(w) \simeq \DD M$ in the category of Hodge modules.
\item For every summand $M_Z$ in the decomposition of $M$ by strict support, and for
every locally defined holomorphic function $f \colon U \to \CC$ that is not
identically zero on $U \cap Z$, the induced morphism 
\[
	\ppsi_f K_Z(w) \to \DD(\ppsi_f K_Z)
\]
is a polarization of Hodge-Lefschetz type (= on primitive parts for $N$).
\item If $\dim \Supp M_Z = 0$, then $K_Z(w) \to \DD K_Z$ is induced by
a polarization of Hodge structures in the usual sense.
\end{enumerate}

The second condition uses the compatibility of the duality functor with nearby
cycles, and the fact that $K(w) \to \DD K$ is automatically compatible with the
decomposition by strict support (because there are nonontrivial morphisms
between perverse sheaves with different strict support). Since $\ppsi_f K_Z$ is by
construction supported in a subset of smaller dimension, the
definition is again recursive.

\begin{example}
Let $M$ be the filtered regular holonomic $\Dmod$-module with $\QQ$-structure
associated with a variation of Hodge structure of weight $w$. Provided one is
sufficiently careful with signs, a polarization $V \tensor_{\QQ} V \to \QQ(-w)$
determines a morphism $M(w + \dim X) \to \DD M$; we will see later that it is a
polarization in the above sense.
\end{example}

Note that we do not directly require any ``positivity'' of the morphism giving the
polarization; instead, we are asking that once we apply sufficiently many nearby
cycle functors to end up with a vector space, certain induced Hermitian forms on this
vector space are positive definite. Because of the indirect nature of this definition,
the problem of signs is quite nontrivial, and one has to be extremely careful in
choosing the signs in various isomorphisms. For that reason, Saito actually defines
polarizations not as morphisms $K(w) \to \DD K$, but as pairings $K \tensor_{\QQ} K
\to \QQ(-w) \decal{2n}$; this is equivalent to our definition, but makes it
easier to keep track of signs.

\begin{definition}
We say that a Hodge module is \define{polarizable} if it admits at least one
polarization, and we denote by 
\[
	\HMp{X}{w} \subseteq \HM{X}{w} \quad \text{and} \quad
		\HMZp{Z}{X}{w} \subseteq \HMZ{Z}{X}{w}
\]
the full subcategories of polarizable Hodge modules.
\end{definition}

\subsection{Kashiwara's equivalence and singular spaces}
\label{par:Kashiwara}

To close this chapter, let me say a few words about the definition of Hodge modules
on analytic spaces; a systematic treatment of $\Dmod$-modules on analytic spaces can
be found in \cite{Saito-D}.  The idea is similar to how one defines holomorphic
functions on analytic spaces: embed a given analytic space $X$ into a complex
manifold, and consider objects on the ambient manifold that are supported on
$X$. Of course, such embeddings may only exist locally, and so in the most general
case, one has to cover $X$ by embeddable open subsets and impose conditions on the
pairwise intersections.

To simplify the discussion, we shall only consider those analytic spaces $X$ that are
globally embeddable into a complex manifold; this class includes, for example, all
quasi-projective algebraic varieties. By definition, a right $\Dmod$-module on
$X$ is a right $\Dmod$-module on the ambient manifold whose support is contained in
$X$. This definition is independent of the choice of embedding because of the
following fundamental result by Kashiwara \cite{Kashiwara-th}.

\begin{proposition}[Kashiwara's Equivalence] \label{prop:Kashiwara}
Let $X$ be a complex manifold, and $i \colon Z \into X$ the inclusion of a
closed submanifold. Then the direct image functor
\[
	\ip \colon \Mcoh(\Dmod_Z) \to \Mcoh(\Dmod_X)
\]
gives an equivalence between the category of coherent right $\Dmod_Z$-modules
and the category of coherent right $\Dmod_X$-modules whose support is contained in $Z$.
\end{proposition}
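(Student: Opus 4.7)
The plan is to reduce to a local model and then construct an explicit quasi-inverse to $\ip$. Since the statement is local on $X$, I may work on a coordinate chart where $Z = \{z_1 = \cdots = z_r = 0\}$. Factoring $i$ as a sequence of codimension-one closed embeddings and using the compatibility $(i \circ j)_+ \simeq \ip \circ j_+$ reduces the problem to the case of a smooth hypersurface, so I fix a coordinate $t$ on $X$ with $Z = \{t = 0\}$ and set $\partial = \partial/\partial t$.

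In this setting a coherent right $\Dmod_Z$-module $\Nmod$ has direct image given explicitly by
\[
	\ip \Nmod = \bigoplus_{k \geq 0} \Nmod \otimes \partial^k,
\]
with the $\Dmod_X$-action determined by the $\Dmod_Z$-action on $\Nmod$ together with the commutation $[\partial, t] = 1$ (so that $(n \otimes \partial^k) \cdot t = k \cdot n \otimes \partial^{k-1}$). The proposed quasi-inverse sends a right $\Dmod_X$-module $\Mmod$ supported on $Z$ to
\[
	i^{\sharp} \Mmod = \ker \bigl( t \colon \Mmod \to \Mmod \bigr),
\]
which inherits a natural right $\Dmod_Z$-structure. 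The identification $i^{\sharp} \ip \Nmod \simeq \Nmod$ is immediate, since the kernel of multiplication by $t$ on the explicit formula is precisely the summand $\Nmod \otimes 1$.

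The substantive step is to show that for a coherent $\Mmod$ supported on $Z$, the summation map
\[
	\bigoplus_{k \geq 0} (i^{\sharp} \Mmod) \otimes \partial^k \longrightarrow \Mmod, \qquad m \otimes \partial^k \mapsto m \cdot \partial^k,
\]
is an isomorphism. Two ingredients enter. First, local nilpotency of $t$: every local section $m$ generates an $\OX$-coherent cyclic submodule $m \cdot \OX$ whose set-theoretic support lies in $Z$, so the analytic Nullstellensatz for coherent sheaves gives $m \cdot t^N = 0$ for some $N$, and hence $\Mmod = \bigcup_N \ker(t^N)$. Second, a Jordan-type decomposition with respect to the Euler operator $\theta = \partial t$: from the identity $\partial^k t = t\partial^k + k \partial^{k-1}$ one reads off that $\theta$ preserves each $\ker(t^k)$ and acts on the quotient $\ker(t^k)/\ker(t^{k-1})$ as the scalar $k$. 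Consequently each $\ker(t^N)$ splits as a direct sum of generalized $\theta$-eigenspaces with eigenvalues in $\{1, \ldots, N\}$, and passing to the union yields $\Mmod = \bigoplus_{k \geq 1} V_k$ with $V_k$ identified with $(i^{\sharp} \Mmod) \cdot \partial^{k-1}$ via a direct calculation.

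The main obstacle is $\Dmod_Z$-coherence of $i^{\sharp} \Mmod$, which does not follow formally from $\Dmod_X$-coherence of $\Mmod$. To handle it, I pick finitely many $\Dmod_X$-generators of $\Mmod$ locally, decompose each into its $\theta$-eigenspace components using the splitting above, and then apply suitable powers of $t$; the identity $(m \partial^{k-1}) \cdot t^{k-1} = (k-1)! \, m$, valid for $m \in i^{\sharp} \Mmod$, shows that right multiplication by $t^{k-1}$ maps each higher eigenspace $V_k$ surjectively onto $i^{\sharp} \Mmod$, producing finitely many $\Dmod_Z$-generators. Once coherence is in hand, functoriality and naturality of both constructions, together with the two isomorphisms already established, imply that $\ip$ and $i^{\sharp}$ are mutually quasi-inverse. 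Finally, because $i^{\sharp}$ admits the coordinate-free description as the subsheaf of sections annihilated by the ideal of $Z$, the local equivalences patch together to give the global statement.
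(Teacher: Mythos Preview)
Your proof is correct and follows essentially the same approach that the paper suggests: the paper does not give a proof but leaves the codimension-one case as an exercise with the hint to use the decomposition into kernels of $t\partial_t - i$ for $i \geq 0$. Your Euler operator $\theta = \partial t = t\partial_t + 1$ differs from the paper's $t\partial_t$ only by a shift, so your eigenspaces $V_k$ for $k \geq 1$ coincide with the paper's $\ker(t\partial_t - i)$ for $i = k-1 \geq 0$, and the argument is the same.
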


\begin{exercise} \label{ex:Kashiwara-inverse}
Suppose that $Z = t^{-1}(0)$ for a holomorphic function $t \colon X \to \CC$. Prove
Kashiwara's equivalence in this case, by using the decomposition given by the kernels
of the operators $t \partial_t - i$ for $i \geq 0$.
\end{exercise}

Since the direct image functor for left $\Dmod$-modules involves an additional twist
by the relative canonical bundle $\omega_Z \tensor \omX^{-1}$, both Kashiwara's
equivalence and the definition of $\Dmod$-modules on singular spaces are more natural
for right $\Dmod$-modules.

\begin{definition}
Let $X$ be an analytic space that can be globally embedded into a complex manifold
$Y$. Then we define 
\[
	\HM{X}{w} \subseteq \HM{Y}{w} \quad \text{and} \quad
	\HMp{X}{w} \subseteq \HMp{Y}{w}
\]
by taking all (polarizable) Hodge modules on $Y$ whose support is contained in $X$.
\end{definition}

One subtle point is that Kashiwara's equivalence is not true for filtered
$\Dmod$-modules, because a coherent sheaf with support in $X$ is not the same thing
as a coherent sheaf on $X$.  To prove that the category $\HM{X}{w}$ is independent of
the choice of embedding, one has to use the fact that the filtered $\Dmod$-modules in
Saito's theory are always quasi-unipotent and regular.

\begin{exercise} \label{ex:Kashiwara-MF}
In the notation of \propositionref{prop:Kashiwara}, suppose that $\Supp \Mmod
\subseteq Z$, and that $(\Mmod, F_{\bullet} \Mmod)$ is quasi-unipotent and regular
along $f = 0$ for every locally defined holomorphic function in the ideal of $Z$.
Show that 
\[
	(\Mmod, F_{\bullet} \Mmod) \simeq \ip(\Mmod_Z, F_{\bullet} \Mmod_Z)
\]
for a filtered $\Dmod$-module $(\Mmod_Z, F_{\bullet} \Mmod_Z)$ on $Z$, unique up to
isomorphism.
\end{exercise}

Now let $M \in \HM{X}{w}$. The underlying regular holonomic right $\Dmod$-module
$\Mmod$, as well as the coherent sheaves $F_{\bullet} \Mmod$ in the Hodge filtration,
are really defined on the ambient complex manifold $Y$; they are not objects on $X$,
although the support of $\Mmod$ is contained in $X$.  On the other hand, the result in
\exerciseref{ex:support} can be used to show that each $\gr_k^F \Mmod$ is a coherent sheaf
on $X$.

\begin{exercise} \label{ex:DR}
Let $M \in \HM{X}{w}$. Deduce from \exerciseref{ex:Kashiwara-MF} that the graded
quotients $\gr_k^F \DR(\Mmod)$ of the de Rham complex are well-defined complexes of
coherent sheaves on $X$, independent of the choice of embedding.  
\end{exercise}

\section{Two important theorems}

In this chapter, we describe two of Saito's most important results: the structure
theorem and the direct image theorem. The structure theorem relates polarizable Hodge
modules and polarizable variations of Hodge structure; the direct image theorem says
that polarizable Hodge modules are stable under direct images by projective
morphisms.  Taken together, they justify the somewhat complicated definition of the
category $\HMp{X}{w}$.

\subsection{Structure theorem}

One of the main results in Saito's second paper \cite{Saito-MHM} is that polarizable
Hodge modules on $X$ with strict support $Z$ are the same thing as generically
defined polarized variations of Hodge structure on $Z$. 

\begin{theorem}[Structure Theorem] \label{thm:structure}
Let $X$ be a complex manifold, and $Z \subseteq X$ an irreducible closed
analytic subvariety.
\begin{enumerate}
\item Every polarizable variation of $\QQ$-Hodge structure of weight $w - \dim Z$ on a
Zariski-open subset of $Z$ extends uniquely to an object of $\HMZp{Z}{X}{w}$.
\item Every object of $\HMZp{Z}{X}{w}$ is obtained in this way.
\end{enumerate}
\end{theorem}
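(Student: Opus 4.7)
The plan is to prove both parts by induction on $\dim Z$. Uniqueness in (1) comes essentially for free from the strict support condition; existence in (1) is the substantial content and requires a reduction to the normal crossing case via resolution; direction (2) reduces to the definitions by passing to the smooth locus.

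For uniqueness in (1), suppose $M, M' \in \HMZp{Z}{X}{w}$ both extend a polarizable variation $\shV$ on a Zariski-open $U \subseteq Z_{\mathrm{reg}}$. The strict support hypothesis identifies the underlying perverse sheaves with the intersection complex $\IC_Z(V)$ and the underlying regular holonomic $\Dmod$-modules with the minimal extension $\jlsl \shV$. The Hodge filtration is then determined recursively by \eqref{eq:F-formula}: for a local holomorphic $f$ whose restriction to $Z$ is not constant and whose zero set contains $Z \setminus U$, the formula recovers $F_p \Mmodf$ from $F_p V_{<0} \Mmodf$ --- which by \exerciseref{ex:V-refined} depends only on the restriction of $\Mmod$ away from $\{f = 0\}$ --- and from the induced filtrations on the $\gr_\alpha^V \Mmodf$, which live on a subvariety of strictly smaller dimension and so are fixed by the inductive hypothesis. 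For direction (2), take $M \in \HMZp{Z}{X}{w}$ and restrict to a small Zariski-open $U \subseteq Z_{\mathrm{reg}}$ on which $\Mmod$ becomes a filtered flat bundle: Griffiths transversality is the compatibility of $F_\bullet \Mmod$ with the $\Dmod_X$-action, and pointwise polarizability at $x \in U$ follows by iterating nearby cycles along a system of general linear functions cutting $U$ down to $\{x\}$ and invoking condition (3) in the definition of polarization at the final step.

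For existence in (1), the plan is as follows. Choose a projective resolution $\pi \colon \widetilde{Z} \to Z$ that is an isomorphism over $U$ and for which $D = \widetilde{Z} \setminus \pi^{-1}(U)$ is a simple normal crossing divisor, and embed $\widetilde{Z}$ into an ambient complex manifold $\widetilde{X}$. I would construct $\widetilde{M} \in \HMZp{\widetilde{Z}}{\widetilde{X}}{w}$ directly in this NCD model: the underlying $\Dmod$-module is the minimal extension of Deligne's canonical meromorphic extension of $(\shV, \nabla)$ across $D$, and the Hodge filtration is assembled stratum by stratum from Schmid's limit Hodge filtration. Then I would invoke the direct image theorem for the proper morphism $\pi$ (composed with a suitable closed embedding), producing a pure polarizable Hodge module whose decomposition by strict support contains a summand with strict support $Z$; that summand is the required extension in $\HMZp{Z}{X}{w}$.

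The main obstacle is verifying the axioms of \parref{par:def-HM} for $\widetilde{M}$. Quasi-unipotence and regularity along a local component $f$ of $D$ can be read off Deligne's extension once one identifies the $V$-filtration with the residue eigenspace filtration of $\nabla$; the strict support property reduces to the fact that a minimal extension has no sub- or quotient-module supported in $D$. The crucial recursive requirement is
\[
	\gr_\ell^{W(N)} \psi_f \widetilde{M} \in \HMs{d-1}{\widetilde{X}}{w - 1 + \ell},
\]
and here I would appeal to Schmid's $\SL(2)$-orbit theorem and, in the several-variable case, the Cattani-Kaplan-Schmid and Kashiwara-Kawai degeneration theorems: these supply, on each stratum of $D$, a polarized limiting mixed Hodge structure whose weight filtration is the monodromy filtration of $N$ and whose $W(N)$-graded pieces are polarizable variations of pure Hodge structure on the strata, to which the inductive hypothesis applies to produce polarizable Hodge modules of the correct weight $w - 1 + \ell$. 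The polarizations on primitive parts under $N$ are given by composing the polarization of $\shV$ with powers of $N$, and the requisite positivity is exactly the content of the CKS polarized mixed Hodge structure theorem. Beyond this conceptual skeleton, the real work lies in the bookkeeping --- matching sign conventions, weight shifts, and normalizations of Tate twists --- so that the outputs of the Schmid--CKS formalism assemble into an object satisfying Saito's precise axioms, and so that the resulting morphism $K(w) \to \DD K$ is a polarization in the exact sense of \parref{par:def-HM}.
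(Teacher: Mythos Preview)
Your overall architecture matches Saito's: uniqueness via \eqref{eq:F-formula}, part~(2) by restriction, and existence by building the extension in a normal crossing model on a resolution $\widetilde{Z}$ and then pushing forward by the direct image theorem. The ingredients you name (Deligne's extension, Schmid's nilpotent and $\SL(2)$-orbit theorems, CKS/KK) are exactly the ones the paper invokes.

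There is, however, a real gap in your verification that $\widetilde{M}$ is a Hodge module. You check quasi-unipotence, regularity, and the recursive condition \eqref{eq:HM-def-4} only ``along a local component $f$ of $D$''. But the definition in \parref{par:def-HM} demands these conditions for \emph{every} locally defined holomorphic function $f$, not just for the coordinate hyperplanes of the normal crossing divisor. This is the main technical difficulty, and it is not bypassed by the CKS/KK theory alone: that theory controls the combinatorial data attached to the $n$ coordinate $V$-filtrations, but says nothing directly about $\psi_g \widetilde{M}$ for a general $g$.

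Saito's solution, which your sketch omits, has two layers. First, in the local model $\Delta^n$ he introduces the class of objects of \emph{normal crossing type} (compatible $F$, $V^1, \dotsc, V^n$ filtrations plus the conditions of \definitionref{def:strict-spec} along each $t_i$) and proves that for such objects $(\Mmod, F_\bullet \Mmod)$ is quasi-unipotent and regular along any \emph{monomial} $g = t_1^{m_1} \dotsm t_n^{m_n}$, with $\psi_g M$ again of normal crossing type and with explicit combinatorial formulas; the CKS/KK input then lets him verify \eqref{eq:HM-def-4} for monomials. Second, for an \emph{arbitrary} locally defined $g$, he uses a further resolution of singularities (making $g$ into a monomial) together with technical results from the proof of the direct image theorem to reduce to the monomial case. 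Without this second reduction your argument does not establish that $\widetilde{M}$ satisfies the definition of a Hodge module, and the subsequent application of \theoremref{thm:direct-image} is not justified.
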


Together with the condition \eqref{eq:HM-def-2} in the definition of Hodge modules,
this result implies that every polarizable Hodge $M \in \HM{X}{w}$ is of the form
\[
	M = \bigoplus_{Z \subseteq X} M_Z,
\]
where $M_Z$ is obtained from a polarizable variation of Hodge structure of
weight $w - \dim Z$ on a Zariski-open subset of the smooth locus of $Z$; conversely,
every object of this type is a polarizable Hodge module. We have therefore achieved
our goal, which was to describe this class in terms of local conditions.

\begin{example}
On every complex manifold $X$,
\[	
	\QQ_X^H \decal{n} = \bigl( \omX, F_{\bullet} \omX, \QQ_X \decal{n} \bigr) 
		\in \HMZp{X}{X}{n}
\]
is a polarizable Hodge module of weight $n = \dim X$.
\end{example}

\begin{example}
More generally, we can consider the constant variation of Hodge structure on the
smooth locus of an irreducible analytic subvariety $Z \subseteq X$. By
\theoremref{thm:structure}, it determines a polarizable Hodge module of weight $\dim
Z$ on $X$; the underlying perverse sheaf is the intersection complex of $Z$.
\end{example}

\begin{example}
Another consequence of the structure theorem is that the inverse image of a polarizable
Hodge module under a smooth morphism $f \colon Y \to X$ is again a polarizable Hodge
module (with a shift in weight by $\dim Y - \dim X$); see \parref{par:inverse-image}.
This statement looks innocent enough, but trying to prove it directly from the
definition is hopeless, because there are too many additional functions on $Y$.
\end{example}

The proof of the second assertion in \theoremref{thm:structure} is not hard. In
fact, given $M \in \HMZp{Z}{X}{w}$, there is a Zariski-open subset of the smooth locus
of $Z$ where $\rat M$ is a local system (up to a shift), and one has all the data
necessary to define a polarizable variation of Hodge structure of weight $w - \dim
Z$. One then uses the definition to argue that the variation of Hodge structure uniquely
determines the original $M$: for example, \eqref{eq:F-formula} shows how to recover
$F_{\bullet} \Mmod$. The real content is therefore in the first
assertion; we shall say more about the proof in \parref{par:structure-proof} below.

\subsection{Direct image theorem}
\label{par:direct-image-thm}

Given the local nature of the definition, it is also not clear that the category of Hodge
modules is preserved by the direct image functor. The main result in Saito's first
paper \cite{Saito-HM} is that this is true for direct images by projective morphisms.
Along the way, Saito also proved the decomposition theorem for those perverse sheaves
that underlie polarizable Hodge modules. 

Let $f \colon X \to Y$ be a projective morphism between complex manifolds, and let
\[
	M = (\Mmod, F_{\bullet} \Mmod, K)
\]
be a filtered regular holonomic $\Dmod$-module with $\QQ$-structure. In a suitable
derived category, one can define the direct image $\fl M$: the underlying
constructible complex is $\derR \fl K$, and the underlying complex of filtered
$\Dmod$-modules is $\fp(\Mmod, F_{\bullet} \Mmod)$; see \parref{par:direct-image} for
more details. The content of the direct image theorem is, roughly speaking, that $M
\in \HMp{X}{w}$ implies $\shH^i \fl M \in \HMp{Y}{w+i}$. A tricky point is that the
cohomology sheaves $\shH^i \fp(\Mmod, F_{\bullet} \Mmod)$ are in general not filtered
$\Dmod$-modules, but live in a larger abelian category (see \parref{par:DF} below).
Unless the complex $\fp(\Mmod, F_{\bullet} \Mmod)$ is what is called ``strict'', it
is therefore not possible to define $\shH^i \fl M$ as a filtered regular
holonomic $\Dmod$-module with $\QQ$-structure.

Before we can give the precise statement of the direct image theorem, we need to
introduce the Lefschetz operator; it plays an even greater role here than in
classical Hodge theory. Let $\ell \in H^2 \bigl( X, \ZZ(1) \bigr)$ be the first Chern
class of a relatively ample line bundle on $X$. It gives rise to a morphism 
$\ell \colon K \to K(1) \decal{2}$ in the derived category of constructible complexes
on $X$; using the fact that $\ell$ can also be represented by a closed $(1,1)$-form,
one can lift this morphism to a morphism $\ell \colon M \to M(1) \decal{2}$. Now we
apply the direct image functor; provided that $\fp(\Mmod, F_{\bullet} \Mmod)$ is strict, 
we obtain a collection of morphisms
\[
	\ell \colon \shH^i \fl M \to \shH^{i+2} \fl M(1),
\]
which together constitute the \define{Lefschetz operator}.

\begin{theorem}[Direct Image Theorem] \label{thm:direct-image}
Let $f \colon X \to Y$ be a projective morphism between two complex manifolds, and
let $M \in \HMp{X}{w}$. Then one has:
\begin{enumerate}
\item The complex $\fp(\Mmod, F_{\bullet} \Mmod)$ is strict, and $\shH^i \fl M \in
\HMp{Y}{w+i}$.
\item For every $i \geq 0$, the morphism
\[
	\ell^i \colon \shH^{-i} \fl M \to \shH^i \fl M(i)
\]
is an isomorphism of Hodge modules.
\item Any polarization on $M$ induces a polarization on $\bigoplus_i \shH^i \fl M$ in
the Hodge-Lefschetz sense (= on primitive parts for $\ell$).
\end{enumerate}
\end{theorem}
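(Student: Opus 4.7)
The plan is to reduce to the projection $\pi: X \times \PP^1 \to X$ and argue by induction on $\dim \Supp M$ via the compatibility of filtered direct images with nearby cycles. First, I factor $f$ through its graph $(\id, f): X \hookrightarrow X \times Y$ together with a relative projective embedding, presenting $f$ as a closed embedding followed by an iterated sequence of $\PP^1$-bundle projections. For closed embeddings, Kashiwara's equivalence (Proposition~\ref{prop:Kashiwara}) together with Exercise~\ref{ex:Kashiwara-MF} implies that $i_+$ is strict and sends $\HMp{X}{w}$ to $\HMp{Y}{w}$, so the problem reduces to the single case $\pi: X \times \PP^1 \to X$.

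In that case, by the decomposition by strict support \eqref{eq:HM-def-2} I may assume $M$ has strict support on some irreducible $Z \subseteq X \times \PP^1$. If $Z$ lies in a fiber of $\pi$, the result again follows from the closed-embedding case; otherwise, by the structure theorem (Theorem~\ref{thm:structure}), $M$ is the unique extension of a polarizable variation of Hodge structure on a Zariski-open subset of the smooth locus of $Z$. I set up a double induction on $\dim \Supp M$ and on $\dim X$, with base case $\dim \Supp M = 0$ being a tautological statement about $\QQ$-Hodge structures on a point.

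The main obstacle is showing that the complex $\pi_+(\Mmod, F_\bullet \Mmod)$ is strict. The key tool is the compatibility of the filtered direct image with nearby and vanishing cycle functors: for any locally defined holomorphic function $g$ on $X$, one has natural isomorphisms
\[
\psi_g \bigl( \shH^i \pi_\ast M \bigr) \simeq \shH^i \pi_\ast \bigl( \psi_{g \circ \pi} M \bigr) \quad \text{and} \quad \phi_{g,1} \bigl( \shH^i \pi_\ast M \bigr) \simeq \shH^i \pi_\ast \bigl( \phi_{g \circ \pi, 1} M \bigr).
\]
Now $\psi_{g \circ \pi} M$ is supported on the proper subvariety $(g \circ \pi)^{-1}(0) \cap \Supp M$, and by axiom \eqref{eq:HM-def-4} its $W(N)$-graded pieces $\gr_\ell^{W(N)} \psi_{g \circ \pi} M$ are polarizable Hodge modules, so the inductive hypothesis gives strictness of their direct images. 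A spectral-sequence argument based on the $V$-filtration then transfers strictness to $\pi_+(\Mmod, F_\bullet \Mmod)$ itself. With strictness in hand, verifying that each $\shH^i \pi_\ast M$ satisfies Saito's recursive axioms \eqref{eq:HM-def-1}, \eqref{eq:HM-def-2}, and \eqref{eq:HM-def-4} reduces, via the same compatibility and the criterion of Theorem~\ref{thm:SS-criterion}, to the inductive hypothesis applied to the nearby-cycle graded pieces.

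For parts (2) and (3), the relatively ample class $\ell$ induces a morphism $\ell: M \to M(1)\decal{2}$ of filtered $\Dmod$-modules with $\QQ$-structure, and hence morphisms $\ell^i: \shH^{-i} \pi_\ast M \to \shH^i \pi_\ast M(i)$. On the Zariski-dense locus where $M$ underlies a polarizable variation of Hodge structure, these are isomorphisms by the classical relative Hard Lefschetz theorem for polarized variations; by the rigidity of semisimple polarizable Hodge modules (a morphism determined by its generic behavior) they extend to global isomorphisms. The polarization on $\bigoplus_i \shH^i \pi_\ast M$ is assembled from the polarization on $M$ together with the Lefschetz class, and positivity on the primitive parts follows from the generic variation case combined with the positivity of the limit Hodge-Lefschetz structure on polarized nearby cycles. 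Finally, Deligne's splitting argument yields a decomposition $\derR \pi_\ast M \simeq \bigoplus_i \shH^i \pi_\ast M \decal{-i}$, completing the proof.
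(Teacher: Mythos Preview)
Your argument has a genuine gap at its base. You claim that if $Z$ lies in a fiber of $\pi \colon X \times \PP^1 \to X$, ``the result again follows from the closed-embedding case''; but a fiber is $\{x\} \times \PP^1$, not a point, so this case is exactly the direct image theorem for the map $\PP^1 \to \pt$ applied to a Hodge module with strict support $\PP^1$. That is not a closed-embedding statement at all---it is the one place in the whole proof where genuine analytic input is required, namely Zucker's theorem that the $L^2$-cohomology of a polarized variation of Hodge structure on a punctured Riemann surface carries a polarized Hodge structure and is computed by the filtered de~Rham complex of the canonical extension. Without this, your induction on $\dim \Supp M$ never gets off the ground: the step from dimension~$0$ (trivial) to dimension~$1$ already needs it, and nothing in your outline supplies it. Saito's own proof isolates this as Part~1 and builds everything else on top of it.

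There is also a structural error in the reduction step: the projection $Y \times \PP^N \to Y$ is \emph{not} an iterated $\PP^1$-bundle for $N \geq 2$ (for instance $\PP^2$ has $b_2 = 1$). Saito does not attempt such a factorization; instead he treats an arbitrary projective $f$ directly, splitting into the cases $\dim f(Z) \geq 1$ (handled by the nearby-cycle spectral sequence you describe, which is his Part~2) and $\dim f(Z) = 0$ (handled by choosing a Lefschetz pencil and blowing up to get a map to $\PP^1$, then invoking Part~2 and the $\PP^1 \to \pt$ case). Your nearby-cycle argument is essentially Saito's Part~2 and is fine in spirit, but it only works when the image has positive dimension; the case $\dim f(Z) = 0$---in particular the absolute case $Y = \pt$---cannot be reached by pulling back functions from the target.

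Finally, your treatment of (2) and (3) is too quick. Appealing to ``rigidity'' to propagate hard Lefschetz from a generic locus presupposes that $\shH^i \fl M$ is already a semisimple Hodge module, which is part of what you are proving. In Saito's argument the Lefschetz isomorphism and the polarization are established simultaneously with the Hodge-module property, by analyzing the joint action of $\ell$ and $N$ on the $E_1$-page of the nearby-cycle spectral sequence and using weak Lefschetz; positivity ultimately traces back to Zucker's result and the bilinear-form estimates in Schmid's orbit theorems, not to a generic-extension principle.
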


\begin{example}
In the case where $f \colon X \to \pt$ is a morphism from a smooth projective variety
to a point, the direct image theorem says that the $i$-th cohomology group $H^i(X,
K)$ of the perverse sheaf $K$ carries a polarized Hodge structure of
weight $w + i$, and that the hard Lefschetz theorem holds.
\end{example}

The most notable consequence of \theoremref{thm:direct-image} is the decomposition
theorem for perverse sheaves underlying polarizable Hodge modules. This result, which
contains \cite[Th\'eor\`eme~6.2.5]{BBD} as a special case, is one of the major
accomplishments of Saito's theory.%
\footnote{De Cataldo and Migliorini \cite{dCM} later found a more elementary
Hodge-theoretic proof for the decomposition theorem in the special case $K = \QQ_X
\decal{\dim X}$.}

\begin{corollary} \label{cor:decomposition-K}
Let $f \colon X \to Y$ be a projective morphism between complex manifolds, and
$K$ a perverse sheaf that underlies a polarizable Hodge module. Then
\[
	\derR \fl K \simeq \bigoplus_{i \in \ZZ} \bigl( \pshH^i \fl K \bigr) \decal{-i}
\]
in the derived category of perverse sheaves on $Y$.
\end{corollary}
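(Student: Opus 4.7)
The plan is to combine \theoremref{thm:direct-image} with Deligne's formal observation \cite{Deligne-Lef} that hard Lefschetz forces a splitting: once the iterated Lefschetz map gives isomorphisms $\ell^i\colon \pshH^{-i} \derR\fl K \to \pshH^i \derR\fl K(i)$, the complex $\derR\fl K$ is automatically quasi-isomorphic to the direct sum of its shifted perverse cohomology sheaves.

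First I would unpack the input. Since $K$ underlies some $M \in \HMp{X}{w}$, \theoremref{thm:direct-image}(1) ensures that $\fp(\Mmod, F_\bullet \Mmod)$ is strict, so each $\pshH^i \derR\fl K$ coincides with $\rat \shH^i \fl M$ and is therefore a genuine perverse sheaf on $Y$; this provides the objects on the right-hand side of the desired decomposition. Next, a relatively ample class $\ell \in H^2(X, \QQ(1))$, represented by a closed $(1,1)$-form, lifts to a morphism $\ell \colon \derR\fl K \to \derR\fl K(1)\decal{2}$, and \theoremref{thm:direct-image}(2) supplies the hard Lefschetz isomorphisms $\ell^i \colon \pshH^{-i}\derR\fl K \to \pshH^i\derR\fl K(i)$ for every $i \geq 0$.

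At this point I would invoke Deligne's lemma in the derived category of the abelian category $\Perv(Y)$: any bounded object $C$ equipped with a morphism $\ell \colon C \to C\decal{2}$ whose iterates induce isomorphisms on cohomology is non-canonically isomorphic to $\bigoplus_i H^i(C) \decal{-i}$. Concretely, one splits the perverse truncation triangles ${}^p\tau_{\leq i-1}\derR\fl K \to {}^p\tau_{\leq i}\derR\fl K \to \pshH^i\derR\fl K\decal{-i}$ inductively, using the hard Lefschetz isomorphisms together with the primitive decomposition for $\ell$ to produce a section of the boundary map at each stage.

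The main obstacle is to run Deligne's argument in a setting where $\derR\fl K$ really is an object and where perverse cohomology plays the role of ordinary cohomology. For this I would appeal to Beilinson's realization equivalence $\Db(\Perv(Y)) \simeq \Dbc(Y)$, so that $\derR\fl K$ may be regarded as an object of $\Db(\Perv(Y))$ and the Lefschetz operator lifts accordingly; once this is arranged the splitting is purely formal, and all of the deep geometric input is packaged inside \theoremref{thm:direct-image}.
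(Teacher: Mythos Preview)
Your proposal is correct and follows essentially the same route as the paper: hard Lefschetz from \theoremref{thm:direct-image}(2) feeds into Deligne's splitting criterion \cite{Deligne-Lef}, and that is the whole argument. One small point: the detour through Beilinson's realization equivalence in your final paragraph is unnecessary, since Deligne's criterion (as formulated in \cite{Deligne-Lef} and adapted in \cite{BBD}) applies directly in any triangulated category equipped with a t-structure, in particular to $\Dbc(\QQ_Y)$ with the perverse t-structure.
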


\begin{proof}
We have a  morphism $\ell \colon \derR \fl K \to \derR \fl K(1) \decal{2}$ with the
property that
\[
	\ell^i \colon \pshH^{-i} \fl K \to \pshH^i \fl K(i)
\]
is an isomorphism for every $i \geq 0$. We can now apply a result by Deligne
\cite{Deligne-Lef} to obtain the desired splitting; note that it typically depends
on $\ell$. 
\end{proof}

The decomposition theorem also holds for the underlying filtered $\Dmod$-modules,
because of the strictness of the complex $\fp(\Mmod, F_{\bullet} \Mmod)$.

\begin{corollary} \label{cor:decomposition-MF}
Under the same assumptions as above, let $(\Mmod, F_{\bullet} \Mmod)$ be a filtered
$\Dmod$-module that underlies a polarizable Hodge module. Then
\[
	\fp(\Mmod, F_{\bullet} \Mmod) 
		\simeq \bigoplus_{i \in \ZZ} 
			\bigl( \shH^i \fp(\Mmod, F_{\bullet} \Mmod) \bigr) \decal{-i}
\]
in the derived category of filtered $\Dmod$-modules on $Y$.
\end{corollary}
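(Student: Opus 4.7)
The plan is to imitate the argument for \corollaryref{cor:decomposition-K}, but carried out one level up, in the derived category of filtered $\Dmod$-modules on $Y$. By part~(1) of \theoremref{thm:direct-image}, the complex $\fp(\Mmod, F_{\bullet}\Mmod)$ is strict. Strictness is the essential input: it implies that each cohomology $\shH^i \fp(\Mmod, F_{\bullet}\Mmod)$ is an honest filtered $\Dmod$-module (rather than only an object of the larger abelian category into which filtered $\Dmod$-module cohomology a priori lives), and in fact underlies the polarizable Hodge module $\shH^i \fl M \in \HMp{Y}{w+i}$. Thus both sides of the desired isomorphism make sense in the derived category of filtered $\Dmod$-modules.

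Next, I would lift the Lefschetz operator to the filtered setting. Since $\ell \in H^2(X, \ZZ(1))$ is representable by a closed $(1,1)$-form on $X$, it induces a morphism of filtered right $\Dmod$-modules $\ell \colon (\Mmod, F_{\bullet}\Mmod) \to (\Mmod, F_{\bullet-1}\Mmod)\decal{2}$, which the direct image functor $f_{+}$ turns into a morphism $\ell \colon \fp(\Mmod, F_{\bullet}\Mmod) \to \fp(\Mmod, F_{\bullet-1}\Mmod)\decal{2}$. By part~(2) of \theoremref{thm:direct-image} its iterates
\[
\ell^i \colon \shH^{-i} \fp(\Mmod, F_{\bullet}\Mmod) \to \shH^{i} \fp(\Mmod, F_{\bullet-i}\Mmod)
\]
are isomorphisms of Hodge modules, hence in particular isomorphisms of filtered $\Dmod$-modules.

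With strictness and hard Lefschetz available at the filtered level, I would invoke Deligne's splitting theorem \cite{Deligne-Lef}. This is a purely formal construction: given a bounded complex in a triangulated category equipped with a $t$-structure, together with an endomorphism whose iterates realize hard Lefschetz on the cohomology objects, one produces a (non-canonical) isomorphism between the complex and the direct sum of the shifts of its cohomology. Applied to $\fp(\Mmod, F_{\bullet}\Mmod)$ with the Lefschetz operator $\ell$, this yields precisely the splitting asserted in the corollary.

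The main obstacle, to my mind, is verifying that Deligne's inductive construction goes through verbatim in the derived category of filtered $\Dmod$-modules; that category is less friendly than the derived category of perverse sheaves used in \corollaryref{cor:decomposition-K}, because arbitrary morphisms of filtered $\Dmod$-modules need not be strict and need not fit into distinguished triangles in the naive way. Strictness of $\fp(\Mmod, F_{\bullet}\Mmod)$ and of the $\ell^i$ is exactly what prevents this difficulty: at each stage of Deligne's induction, the auxiliary morphisms one constructs are built from the strict morphisms $\ell^i$, and so remain strict, guaranteeing that the resulting splitting is genuine rather than only holding after forgetting the filtration.
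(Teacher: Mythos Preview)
Your proposal is correct and follows the same approach the paper indicates: the paper does not give a separate proof of this corollary, only the remark that ``the decomposition theorem also holds for the underlying filtered $\Dmod$-modules, because of the strictness of the complex $\fp(\Mmod, F_{\bullet} \Mmod)$,'' and your argument spells out exactly what this means---lift $\ell$ to the filtered level, use strictness and part~(2) of \theoremref{thm:direct-image} to get hard Lefschetz for the filtered cohomology objects, then apply Deligne's formal splitting criterion. One small comment: your worry about Deligne's construction failing in $\DbcohF(\Dmod_Y)$ is slightly misplaced, since (as discussed in \parref{par:DF}) this category is equivalent to $\DbcohG(\Rmod_Y)$ and carries a genuine t-structure, so Deligne's theorem applies directly; strictness is needed not to make the triangulated-category argument work, but to guarantee that the cohomology objects lie in $\MF(\Dmod_Y)$ and that the Lefschetz isomorphisms, being isomorphisms of Hodge modules, are in particular isomorphisms of filtered $\Dmod$-modules.
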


\subsection{Proof of the direct image theorem}

Both \theoremref{thm:structure} and \theoremref{thm:direct-image} are ultimately
consequences of results about polarized variations of Hodge structure. In the case of
the structure theorem, the main input are the results of Cattani, Kaplan, and Schmid
\cite{Schmid,CKS,CKS-L2} and Kashiware and Kawai \cite{KK} about degenerating polarized
variations of Hodge on the complement of a normal crossing divisor. In the case of
the direct image theorem, the main input are the results of Zucker \cite{Zucker}
about the cohomology of a polarized variation of Hodge structure
on a punctured Riemann surface. Note that the classical theory only deals with
objects that are mildly singular; Saito's theory can thus be seen as a mechanism that
reduces problems about polarizable variations of Hodge structure with arbitrary
singularities to ones with mild singularities.

Let me begin by explaining the proof of the direct image theorem
\cite[\S5.3]{Saito-HM}. Given the recursive nature of the definitions, it is clear
that \theoremref{thm:direct-image} has to be proved by induction on $\dim
Z$. The argument has three parts:
\begin{enumerate}
\item Establish the theorem for $\dim X = 1$.
\item Prove the theorem in the case $\dim f(Z) \geq 1$.
\item Prove the theorem in the case $\dim f(Z) = 0$.
\end{enumerate}

In the third part, we shall allow ourselves to use the structure theorem; one can
avoid this by using some ad-hoc arguments, but it simplifies the presentation. This
may look like a circular argument, because the proof of the structure theorem relies
on the direct image theorem -- but in fact it is not: the direct image theorem is only
used for resolutions of singularities, which fall into the case $\dim f(Z)
\geq 1$. Although I decided to keep the two proofs separate, one can actually make
everything work out by proving both theorems together by induction on $\dim Z$.

\paragraph{Part 1}
Saito first proves \theoremref{thm:direct-image} for the mapping from a compact
Riemann surface to a point; in fact, only the case $X = \PP^1$ is
needed. Let $M \in \HMZp{Z}{X}{w}$ be a polarized Hodge module with strict
support $Z \subseteq X$. Then $Z$ is either a point, in which case $M$ comes from a
polarized Hodge structure of weight $w$; or $Z$ is equal to $X$, in which case $M$
comes from a polarized variation of Hodge structure of weight $w-1$ on the complement
of finitely many points $x_1, \dotsc, x_m$. The first case is trivial; in the second
case, the direct image theorem follows from Zucker's work. In \cite{Zucker}, which
predates the theory of Hodge modules, Zucker proves that the $L^2$-cohomology groups
of the variation of Hodge structure carry a polarized Hodge structure. Although he
does not explicitly mention $\Dmod$-modules, he also proves that the filtered
$L^2$-complex of the variation of Hodge structure is quasi-isomorphic to the filtered
de Rham complex of $(\Mmod, F_{\bullet} \Mmod)$. The key point is that the asymptotic
behavior of the Hodge norm near each $x_i$ is controlled by the $V\!$-filtration on
$\Mmod$ and by the weight filtration on the nearby cycles; this is a consequence of
Schmid's results \cite{Schmid}. Zucker's construction therefore leads to the same
filtration and to the same pairing as in Saito's theory, and hence it
implies the direct image theorem. The interested reader can find a detailed
account of the proof in \cite[Chapter~3]{Sabbah}.

\paragraph{Part 2}
The next step is to prove the direct image theorem in the case $\dim f(Z) \geq 1$. By
induction, we can assume that we already know the result for all polarized Hodge
modules with strict support of dimension less than $\dim Z$. The key point in the
case $\dim f(Z) \geq 1$ is that applying a nearby or vanishing cycle functor on $Y$
will reduce the dimension of $Z$, and can therefore be handled by induction. 

To show that each $\shH^i \fl M$ is a Hodge module of weight $w+i$, we have to verify
the conditions in the definition; the same goes for showing that the polarization on
$M$ induces a polarization on the primitive part $\Pell \shH^{-i} \fl M = \ker
\ell^{i+1}$. This involves applying the functors $\psi_g$ and $\phi_{g,1}$, where $g$
is a locally defined holomorphic function on $Y$. Some care is needed because $\shH^i
\fl M$ is a priori not a filtered regular holonomic $\Dmod$-module with
$\QQ$-structure, but only lives in a larger abelian category (see
\parref{par:direct-image}).

Set $h = g \circ f$, and consider first the case where $f(Z) \not\subseteq
g^{-1}(0)$, or equivalently, $Z \not\subseteq h^{-1}(0)$. In the abelian category
mentioned above, one has a spectral sequence
\[
	E_1^{p,q} = \shH^{p+q} \fl \bigl( \gr_{-p}^W \psi_h M \bigr)
		\Longrightarrow \shH^{p+q} \fl \psi_h M.
\]
Each $\gr_{-p}^W \psi_h M$ is a polarized Hodge module of weight $w-1-p$ on
$h^{-1}(0) \cap Z$; by induction, $E_1^{p,q}$ is therefore a polarized Hodge module
of weight $w-1+q$. Note that we only have a polarization on the primitive part of
$\gr_{-p}^W \psi_h M$ with respect to $N$, and that \theoremref{thm:direct-image}
only produces a polarization on the primitive part with respect to $\ell$; this
makes it necessary to study the simultaneous action of $\ell$ and $N$. 
In any case, it follows that the entire spectral sequence is taking place in the
category of Hodge modules, and so it degenerates at $E_2$ for weight reasons. Now
some general results about the compatibility of the direct image functor with nearby
and vanishing cycles imply that
\begin{equation} \label{eq:psi-gh}
	\psi_g \shH^i \fl M \simeq \shH^i \fl \psi_h M \quad \text{and} \quad
		\phi_{g,1} \shH^i \fl M \simeq \shH^i \fl \phi_{h,1} M.
\end{equation}
This already shows that the objects on the left-hand side are Hodge modules on $Y$.
Another key point is that the filtration induced by the spectral sequence is the
monodromy filtration for the action of $N$ on $\psi_g \shH^i \fl M$; this is proved
by showing that the Lefschetz property for $(\ell, N)$ on the $E_1$-page of the
spectral sequence continues to hold on the $E_2$-page.

At this point, we can finally prove that the complex $\fp(\Mmod, F_{\bullet} \Mmod)$
is strict, and hence that each $\shH^i \fl M$ is a filtered regular holonomic
$\Dmod$-module with $\QQ$-structure. By controlling $\psi_g M$ and $\phi_{g,1} M$ in
the case when $f(Z) \not\subseteq g^{-1}(0)$, we actually know the whole associated
graded of $\Mmod$ with respect to the $V\!$-filtration. In sufficiently negative
degrees, the $V\!$-filtration is equivalent to the $g$-adic filtration, and so we
obtain information about $\fp(\Mmod, F_{\bullet} \Mmod)$ in a small analytic 
neighborhood of $g^{-1}(0)$. Since $\dim f(Z) \geq 1$, such neighborhoods cover
$f(Z)$; this shows that $\fp(\Mmod, F_{\bullet} \Mmod)$ is strict on all of $Y$.
Similar reasoning is used to prove the Lefschetz isomorphism.

Now we can start checking the conditions in the definition. Since $(\Mmod, F_{\bullet}
\Mmod)$ is quasi-unipotent and regular along $h = 0$, the arguments leading to
\eqref{eq:psi-gh} show that $\shH^i \fp(\Mmod, F_{\bullet} \Mmod)$ is quasi-unipotent
and regular along $g = 0$, too. This proves \eqref{eq:HM-def-1} in the
case $f(Z) \not\subseteq g^{-1}(0)$; the case $f(Z) \subseteq g^{-1}(0)$ is
more or less trivial. The recursive condition in \eqref{eq:HM-def-4}, as well as the
assertions about the polarization, then follow from the analysis of the spectral
sequence above.

The only remaining condition is \eqref{eq:HM-def-3}, namely that $\shH^i \fl M$ admits a
decomposition by strict support. Somewhat surprisingly, the proof of this fact uses
the polarization in an essential way. The idea is that the criterion in
\theoremref{thm:SS-criterion} reduces the problem to an identity among Hodge modules,
which can be checked pointwise after decomposing by strict support; in the end, it
becomes a linear algebra problem about certain families of polarized Hodge
structures. (The same result is used again during the proof of the structure theorem.)

\paragraph{Part 3}
It remains to prove \theoremref{thm:direct-image} in the case $\dim f(Z) = 0$. We may
clearly assume that $X$ is projective space and $Y$ is a point; now the idea is to
use a pencil of hyperplane sections to get into a situation where one can apply the
inductive hypothesis. Let $\pi \colon \Xt \to X$ be the blowup of $X$ at a generic
linear subspace of codimension $2$; as shown in the diagram
\[
\begin{tikzcd}[column sep=large,row sep=large]
\Xt \dar{p} \arrow{dr}{\ft} \rar{\pi} & X \dar{f} \\
\PP^1 \rar & \pt
\end{tikzcd}
\]
we also get a new morphism $p \colon \Xt \to \PP^1$. To apply the inductive
hypothesis, we need to construct a polarizable Hodge module $\Mt$ on $\Xt$; this can
easily be done with the help of the structure theorem. Let $\Zt \subseteq
\pi^{-1}(Z)$ be the irreducible component birational to $Z$. From
\theoremref{thm:structure}, we get an equivalence of categories
\[
	\HMZp{Z}{X}{w} \simeq \HMZp{\Zt}{\Xt}{w},
\]
and so $M$ determines a Hodge module $\Mt \in \HMZp{\Zt}{\Xt}{w}$ with a
polarization. We already know that the direct image theorem is true for $\Mt$ and
$\pi$, and so $\shH^0 \pil \Mt$ is a polarizable Hodge module of weight $w$; it is
clear from the construction that the summand with strict support $X$ is isomorphic to
$M$ (with the original polarization).

Now we apply the direct image theorem to $\Mt$ and $p \colon \Xt \to \PP^1$, and then
again to the Hodge modules $\shH^i \pl \Mt$ on $\PP^1$; the result is that the complex
$\ftp (\Mmodt, F_{\bullet} \Mmodt)$ is strict, and that $H^i(\Xt, \Mt) = \shH^i \ftl
\Mt$ is a Hodge structure of weight $w+i$. Both properties are inherited by direct
summands, and so it follows that $\fp(\Mmod, F_{\bullet} \Mmod)$ is also strict, and
that $H^i(X, M) = \shH^i \fl M$ is also a Hodge structure of weight $w+i$.

The remainder of the argument consists in proving the Lefschetz isomorphism and the
assertion about the polarization. Here Saito's method is to use the weak Lefschetz
theorem as much as possible, rather than trying to compare Lefschetz operators and
polarizations for $M$ and $\Mt$ directly; the point is that $\ell \simeq \il \circ
\iu$, where $i \colon Y \into X$ is the inclusion of a general hyperplane. This takes
care of all cases except for proving that the polarization on $M$ induces a
polarization of the Hodge structure $\Pell H^0(X, M) = \ker \ell$. In this remaining
case, one has
\[
	\Pell H^0(X, M) \into H^0 \bigl( \PP^1, \Pellt \shH^0 \pl \Mt \bigr),
\]
with a corresponding identity for the polarizations; since we control the
polarization on $\Pellt \shH^0 \pl \Mt$ by induction, an appeal to the theorem on
$\PP^1$ finishes the proof.

\subsection{Proof of the structure theorem}
\label{par:structure-proof}

Now let me explain how Saito proves the difficult half \theoremref{thm:structure},
namely that a generically defined polarized variation of Hodge structure on $Z$ of
weight $w - \dim Z$ extends uniquely to an object of $\HMZp{Z}{X}{w}$. Since it is
easy to deduce from the definition that there can be at most one extension, we shall
concentrate on the problem of constructing it. 

We first consider a simplified local version of the problem to which we can apply the
theory of degenerating variations of Hodge structure. Let $X = \Delta^n$ be a product
of disks, with coordinates $t_1, \dotsc, t_n$, and let $(\shV, F^{\bullet} \shV, V,
Q)$ be a polarized variation of Hodge structure on the complement of the divisor $t_1
\dotsb t_n = 0$. We shall do the following:
\begin{enumerate}
\item Extend the variation of Hodge structure to a filtered regular holonomic
$\Dmod$-module with $\QQ$-structure $M$, and construct a candidate for a
polarization.  
\item Show that $M$ is of normal crossing type, and that many of its properties are
therefore determined by combinatorial data.
\item Verify the conditions in the special case of a monomial function $g$.
\end{enumerate}
Afterwards, we can use resolution of singularities and the direct image
theorem to show that this special case is enough to solve the problem in general.

\paragraph{Part 1}
For the time being, we assume that we have a polarized variation of Hodge structure of
weight $w-n$, defined on the complement of the divisor $t_1 \dotsm t_n = 0$ in
$X = \Delta^n$, with quasi-unipotent local monodromy. We have to construct an
extension $M = (\Mmod, F_{\bullet} \Mmod, K)$ to a 
filtered regular holonomic $\Dmod$-module with $\QQ$-structure. Define $K$ as the
intersection complex of the local system $V$, and $\Mmod$ as the corresponding
regular holonomic $\Dmod$-module; both have strict support $X$. Since
$(\Mmod, F_{\bullet} \Mmod)$ is supposed to be quasi-unipotent and regular
along $t_1 \dotsm t_n = 0$, we are forced to define the filtration $F_{\bullet}
\Mmod$ as in \eqref{eq:F-formula}. More concretely,
\[
	\Mmod = \Bigl( \omX \tensor \shVt^{(-1,0]} \Bigr) \cdot \Dmod_X 
		\subseteq \omX \tensor \shVt
\]
is the $\Dmod$-submodule generated by the lattice $\shVt^{(-1,0]}$ in Deligne's
meromorphic extension $\shVt$ of the flat bundle $(\shV, \nabla)$, and the filtration
is given by the formula
\[
	F_p \Mmod = \sum_{i = 0}^{\infty} 
		\Bigl( \omX \tensor F^{i-p-n} \shVt^{(-1,0]} \Bigr) \cdot F_i \Dmod_X.
\]
From the polarization on $V$, one can also construct a morphism $K(w) \to \DD K$ that
is a candidate for being a polarization on $M$. 

\paragraph{Part 2}
To prove that $M$ is a polarized Hodge module, we use induction on $n \geq 0$. Here
it is useful to work inside a larger class of objects that is preserved by various
operations. From the nilpotent orbit theorem \cite{Schmid}, one can deduce that $M$
has the following properties:
\begin{aenumerate}
\item The perverse sheaf $K$ is quasi-unipotent, and the natural stratification
of $\Delta^n$ is adapted to $K$.
\item The $n+1$ filtrations $F_{\bullet} \Mmod, V_{\bullet}^1 \Mmod, \dotsc, V_{\bullet}^n
\Mmod$ are compatible, where $V_{\bullet}^i \Mmod$ denotes the $V\!$-filtration with
respect to the function $t_i$.
\item Set $\partial_i = \partial/\partial t_i$ for every $i = 1, \dotsc, n$; then one has
\begin{align*}
	F_p V_{\alpha}^i \Mmod \cdot t_i &= F_p V_{\alpha-1}^i \Mmod 
		\quad \text{for $\alpha < 0$,} \\
	F_p \gr_{\alpha}^{V^i} \Mmod \cdot \partial_i &= F_{p+1} \gr_{\alpha+1}^{V^i} \Mmod
		\quad \text{for $\alpha > -1$,}
\end{align*}
which is a special case of the conditions in \definitionref{def:strict-spec}.
\end{aenumerate}
In general, we say that an object with these properties is of \define{normal crossing
type}. 

This definition is closely related to the combinatorial description of regular holonomic
$\Dmod$-modules \cite{GGM}. If the first condition is satisfied,
then $\Mmod$ is determined by the following \define{combinatorial data}: the
collection of vector spaces
\[
	\gr_{\alpha}^V \Mmod = \gr_{\alpha_1}^{V^1} \dotsb \gr_{\alpha_n}^{V^n} \Mmod,
		\quad \text{for $\alpha \in (\QQ \cap [-1, 0])^n$,}
\]
and the linear mappings induced by $t_1, \dotsc, t_n$ and $\partial_1, \dotsc,
\partial_n$ and $N_1, \dotsc, N_n$. As in \definitionref{def:strict-spec}, Saito
introduces the other two conditions in order to deal with the filtration $F_{\bullet}
\Mmod$ (which is not itself combinatorial). For objects of normal crossing type, the
combinatorial data controls the properties of morphisms: for example, if $\varphi
\colon M_1 \to M_2$ is a morphism between objects of normal crossing type, then $\ker
\varphi$ and $\im \varphi$ are again of normal crossing type, provided that the
induced morphisms on combinatorial data are strictly compatible with the Hodge
filtration.

Saito also shows that if $M$ is of normal crossing type, then the filtered
$\Dmod$-module $(\Mmod, F_{\bullet} \Mmod)$ is Cohen-Macaulay, and quasi-unipotent
and regular along any monomial $g = t_1^{m_1} \dotsm t_n^{m_n}$;
moreover, the vanishing cycles $\psi_g M$ are again of normal crossing type. Part of 
the argument consists in finding formulas for the combinatorial data of $\psi_g M$, and
for the nilpotent operator $N$, in terms of the combinatorial data of $M$ itself.

\paragraph{Part 3}
Now we can verify all the conditions in the case of a monomial $g = t_1^{m_1}
\dotsm t_n^{m_n}$. Since we got $M$ from a polarized variation of Hodge
structure, the combinatorial data lives in the category of mixed Hodge
structures: for nearby cycles, this follows from the $\SL(2)$-orbit theorem
\cite{CKS}, and for vanishing cycles from the ``vanishing cycle theorem'' \cite{KK}
respectively ``descent lemma'' \cite{CKS-L2}. In particular, $t_1, \dotsc, t_n$ and
$\partial_1, \dotsc, \partial_n$ are morphisms of mixed Hodge structure, and
therefore strictly compatible with the Hodge filtration.

Since $M$ is of normal crossing type, $(\Mmod, F_{\bullet} \Mmod)$ is quasi-unipotent
and regular along $g = 0$, and $\psi_g M$ is again an object of normal crossing type.
From the combinatorial data for $M$, one can write down that for $\psi_g M$, and 
show that it also lives in the category of mixed Hodge structures. Because $N$ is a
morphism of mixed Hodge structures, and therefore strictly compatible with the Hodge
filtration, it follows that the monodromy filtration $W(N)_{\bullet} \psi_g M$, as well
as the primitive decomposition for $N$, are also of normal crossing type. In particular,
the primitive part $P_N \gr_{\ell}^{W(N)} \psi_g M$ is again an object of normal crossing
type.

The next step is to prove that $P_N \gr_{\ell}^{W(N)} \psi_g M$ admits a decomposition by
strict support. It is enough to check the condition in \theoremref{thm:SS-criterion}
for the functions $t_1, \dotsc, t_n$; because we are dealing with objects of normal
crossing type, the problem further reduces to a statement about polarized Hodge
structures (which has already been used during the proof of the direct image theorem).
Note that every summand in the decomposition by strict support is again of normal
crossing type.

Now let $M'$ be any summand in the decomposition; after permuting
the coordinates, its support will be of the form $\Delta^m \times \{0\} \subseteq
\Delta^n$. To conclude the argument, we have to show that $M'$ is again an
extension of a polarized variation of Hodge structure: by induction, this will
guarantee that $M'$ is a Hodge module of weight $w - 1 + \ell$, and that the induced
morphism $K'(w - 1 + \ell) \to \DD K'$ is a polarization. Because $M'$ is an object
of normal crossing type, $(\Mmod', F_{\bullet} \Mmod')$ is quasi-unipotent and
regular along $t_1 \dotsm t_m = 0$, and so $M'$ must be the extension of a variation of
Hodge structure $(\shV', F^{\bullet} \shV', V')$; the problem is to show that the
induced bilinear form $Q'$ on $V'$ is a polarization. Here Saito appeals to the
$\SL(2)$-orbit theorem and its consequences \cite{CKS}: by looking at the
combinatorial data, one sees that $\psi_{t_1} \dotsm \psi_{t_m} M'$ is a nilpotent
orbit; this implies that $Q'$ polarizes the variation of Hodge structure in a small
neighborhood of the origin in $\Delta^m$.

\begin{proof}[Proof of \theoremref{thm:structure}]
Suppose first that $V$ is defined on the complement of a normal crossing divisor $D$
in a complex manifold $X$. The same construction as above produces an extension $M$
to a filtered regular holonomic $\Dmod$-module with $\QQ$-structure and strict
support $X$. Now we have to check that all the conditions are satisfied for an arbitrary
locally defined holomorphic function $g$. Resolution of singularities, together with
some technical results from the proof of the direct image theorem, reduces this to
the case where $X$ is a product of disks and $g$ is a monomial; since we have already
proved the result in that case, we conclude that $M$ is a polarized Hodge module.

To deal with the general case, we again use resolution of singularities and the
direct image theorem. Choose an embedded resolution of singularities $f \colon \Zt
\to X$ that is an isomorphism over the subset of $Z$ where the variation of Hodge
structure is defined, and that makes the complement into a normal crossing divisor.
We already know that we can extend the variation of Hodge structure to an object $\Mt
\in \HMZp{\Zt}{\Zt}{w}$; since we can choose $f$ to be projective, we can then apply
\theoremref{thm:direct-image} to conclude that $\shH^0 \fl \Mt \in \HMp{X}{w}$. Now
the direct summand with strict support $Z$ is the desired extension.
\end{proof}

\section{Mixed Hodge modules}

The cohomology groups of algebraic varieties that are not smooth and projective still
carry mixed Hodge structures. To include those cases into his theory, Saito was
led to consider mixed objects. In fact, the definition of pure Hodge modules already
hints at the existence of a more general theory: one of the conditions says that
$\gr_{\ell}^{W(N)}(\psi_f M)$ is a Hodge module of weight $w-1+\ell$ for every $\ell
\in \ZZ$; this suggests that the nearby cycles $\psi_f M$, together with the
monodromy filtration for $N$ shifted by $w-1$ steps, should actually be a ``mixed''
Hodge module.  

\subsection{Weakly mixed Hodge modules}
\label{par:MHW}

To define mixed Hodge modules, Saito first introduces an auxiliary category $\MHW{X}$ of
\define{weakly mixed Hodge modules}. Its objects are pairs $(M, W_{\bullet} M)$,
where $M$ is a filtered regular holonomic $\Dmod$-module with $\QQ$-structure, and 
$W_{\bullet} M$ is a finite increasing filtration with the property that
\[
	\gr_{\ell}^W \! M \in \HM{X}{\ell}.
\]
A weakly mixed Hodge module is called \define{graded-polarizable} if the individual
Hodge modules $\gr_{\ell}^W \! M$ are polarizable; we denote this category by
the symbol $\MHWp{X}$. 

Certain results from the pure case carry over to this setting without additional
effort. For example, suppose that $(M, W_{\bullet} M) \in \MHWp{X}$, and that we have
a projective morphism $f \colon X \to Y$. Then the spectral sequence
\[
	E_1^{p,q} = \shH^{p+q} \fl \bigl( \gr_{-p}^W M \bigr) 
		\Longrightarrow \shH^{p+q} \fl M
\]
degenerates at $E_2$, and each $\shH^i \fl M$, together with the filtration induced
by the spectral sequence, again belongs to $\MHWp{Y}$. This is an easy
consequence of \theoremref{thm:direct-image} and the fact that there are no
nontrivial morphisms between polarizable Hodge modules of different weights. One can also
show that every morphism in the category $\MHW{X}$ is strictly compatible with the
filtrations $F_{\bullet} \Mmod$ and $W_{\bullet} \Mmod$ (in the strong sense).

In order to have a satisfactory theory of mixed Hodge modules, however, we will need
to impose restrictions on how the individual pure Hodge modules can be put together.
In fact, the same problem already appears in the study of variations of mixed Hodge
structure. The work of Cattani, Kaplan, and Schmid \cite{CKS} shows that every
polarizable variation of Hodge structure degenerates in a controlled way; but for
variations of mixed Hodge structure, this is no longer the case. To get a theory
similar to the pure case, one has to restrict to \define{admissible} variations of
mixed Hodge structure; this notion was introduced by Steenbrink and Zucker \cite{SZ},
and further developed by Kashiwara \cite{Kashiwara-st}.

\begin{example}
When we assemble a mixed Hodge structure from $\ZZ(0)$ and $\ZZ(1)$, the
result is determined by a non-zero complex number, because
\[
	\Ext_{\MHS}^1 \bigl( \ZZ(0), \ZZ(1) \bigr) \simeq \CCst.
\]
A variation of mixed Hodge structure of this type, say on the punctured disk,
is therefore the same thing as a holomorphic function $f \colon \dst \to \CCst$;
note that $f$ may have an essential singularity at the origin. In this simple
example, the admissibility condition amounts to allowing only meromorphic functions $f$.
\end{example}

Note that admissibility is always defined relative to a partial compactification;
two partial compactifications that are not bimeromorphically equivalent may lead to
different notions of admissibility. Only in the case of algebraic varieties,
where any two compactifications are automatically birationally equivalent, can we
speak of admissibility without specifying the compactification.

\subsection{Definition of mixed Hodge modules}
\label{par:def-MHM}

In this section, we present a simplified definition of the category of mixed
Hodge modules that Saito has developed over the years.%
\footnote{I learned the details of this new definition from a lecture by Mochizuki.}
The point is that there is some redundancy in the original definition
\cite[\S2.d]{Saito-MHM}, which can be eliminated by a systematic use of the stability
of $\MHMp{X}$ under taking subquotients.

We shall only define mixed Hodge modules on complex manifolds; more general analytic
spaces are again dealt with by local embeddings into complex manifolds. Let $(M,
W_{\bullet} M)$ be a weakly mixed Hodge module on a complex manifold $X$. We have to
decide when $M$ should be called a mixed Hodge module; the idea is to adapt the
recursive definition from the pure case, by also imposing a condition similar to
admissibility in terms of nearby and vanishing cycle functors. 

Suppose then that $f \colon X \to \CC$ is a non-constant holomorphic function on $X$. By
induction on the length of the weight filtration, one can easily show that the
underlying filtered $\Dmod$-module $(\Mmod, F_{\bullet} \Mmod)$ is quasi-unipotent
and regular along $f = 0$; recall that this means the existence of a rational
$V\!$-filtration $V_{\bullet} \Mmodf$ that interacts well with the Hodge filtration
$F_{\bullet} \Mmodf$. Together with the weight filtration, we now have three
filtrations on $\Mmodf$, and so we require that
\begin{equation} \label{eq:MHM-def-1}
	\text{the three filtrations $F_{\bullet} \Mmodf$, $V_{\bullet} \Mmodf$, and
		$W_{\bullet} \Mmodf$ are compatible.}
\end{equation}
Roughly speaking, compatibility means that when we compute the associated graded, the
order of the three filtrations does not matter; this is automatic in the case of two
filtrations, but not usually true in the case of three or more.%
\footnote{This problem has been studied at length in \cite[\S1]{Saito-MHM}.}

Assuming the condition above, the nearby and vanishing cycles of each $W_i M$ are
defined, and so we can introduce the naive limit filtrations
\[
	L_i(\psi_f M) = \psi_f \bigl( W_{i+1} M \bigr) 
		\quad \text{and} \quad
	L_i(\phi_{f,1} M) = \phi_{f,1} \bigl( W_i M \bigr).
\]
Now recall that both $\psi_f M$ and $\phi_{f,1} M$ are equipped with a nilpotent
endomorphism $N = (2\pi i)^{-1} \log T_u$; since $N$ preserves the weight
filtration, it also preserves the two limit filtrations. As in the admissibility
condition of Steenbrink and Zucker, we ask for the existence of the so-called
\define{relative monodromy filtration}: 
\begin{equation} \label{eq:MHM-def-2}
	\text{\parbox{10.2cm}{%
		the relative monodromy filtrations $W_{\bullet}(\psi_f M) 
			= W_{\bullet} \bigl( N, L_{\bullet}(\psi_f M) \bigr)$ \\
		and $W_{\bullet}(\phi_{f,1} M) = W_{\bullet} \bigl( N, 
		L_{\bullet}(\phi_{f,1} M) \bigr)$ for the action of $N$ exist.}}
\end{equation}
This is the natural replacement for the monodromy filtration of $N$ in the pure case;
but while the existence of the monodromy filtration is automatic, the existence of
the relative monodromy filtration is a nontrivial requirement. We say that $(M, W_{\bullet}
M)$ is \define{admissible along $f = 0$} if both of the conditions in
\eqref{eq:MHM-def-1} and \eqref{eq:MHM-def-2} are satisfied; one should think of this
as saying that the restriction of $(M, W_{\bullet} M)$ to the open subset $X
\setminus f^{-1}(0)$ is admissible relative to $X$.

\begin{exercise}
Consider a weakly mixed Hodge module $(M, W_{\bullet} M)$ on the disk $\Delta$,
whose restriction to $\dst$ is a variation of mixed Hodge structure with unipotent
monodromy. Show that $(M, W_{\bullet} M)$ is admissible along $t = 0$ if and only if
the variation is admissible in the sense of Steenbrink and Zucker \cite[\S3.13]{SZ}.
\end{exercise}

Now we can define the category of mixed Hodge modules; as in the pure case, the
definition is recursive. A weakly mixed Hodge module $(M, W_{\bullet} M) \in \MHW{X}$
is called a \define{mixed Hodge module} if, for every locally defined
holomorphic function $f \colon U \to \CC$, the pair $(M, W_{\bullet} M)$ is
admissible along $f = 0$, and 
\begin{equation} \label{eq:MHM-def-3}
	\text{$\bigl( \psi_f M, W_{\bullet}(\psi_f M) \bigr)$ and 
		$\bigl( \phi_{f,1} M, W_{\bullet}(\phi_{f,1} M) \bigr)$ are mixed Hodge modules}
\end{equation}
whenever $f^{-1}(0)$ does not contain any irreducible components of $U \cap \Supp
\Mmod$. This definition makes sense because $\psi_f M$ and $\phi_{f,1} M$ are then
supported in a subset of strictly smaller dimension. Note that the shifts by $w-1$
and $w$ from the pure case are now built into the definition of the relative
monodromy filtration.

\begin{definition}
We denote by 
\[
	\MHM{X} \subseteq \MHW{X} \quad \text{and} \quad
		\MHMp{X} = \MHM{X} \cap \MHWp{X}
\]
the full subcategories of all (graded-polarizable) mixed Hodge modules. 
\end{definition}

A morphism between two Hodge modules is simply a morphism between the underlying
filtered regular holonomic $\Dmod$-modules with $\QQ$-structure that is compatible
with the weight filtrations. As in the case of $\MHW{X}$, one can show that every
morphism is strictly compatible with the filtrations $F_{\bullet} \Mmod$ and
$W_{\bullet} \Mmod$ (in the strong sense).

\subsection{Properties of mixed Hodge modules}

In this section, we collect a few important properties of mixed Hodge modules. First,
$\MHM{X}$ and $\MHMp{X}$ are abelian categories; here the main point is that every
subquotient of a mixed Hodge module in the category $\MHW{X}$ is again a
mixed Hodge module. Almost by definition, (graded-polarizable) mixed Hodge modules
are stable under the application of nearby and vanishing cycle functors. Extending
the other standard functors from perverse sheaves to mixed Hodge modules is more
involved. Given a projective morphism $f \colon X \to Y$, one has a collection of
cohomological functors
\[
	\shH^i \fl \colon \MHMp{X} \to \MHMp{Y};
\]
see the discussion near the beginning of \parref{par:MHW}. As explained in
\parref{par:inverse-image}, one can also define the cohomological inverse image functors
\[
	\shH^i \fu \colon \MHMp{X} \to \MHMp{Y} \quad \text{and} \quad 
		\shH^i \fus \colon \MHMp{X} \to \MHMp{Y}
\]
for an arbitrary morphism $f \colon Y \to X$. Lastly, one can define a duality
functor 
\[
	\DD \colon \MHMp{X} \to \MHMp{X}^{\opp}, 
\]
compatible with Verdier duality for the underlying perverse sheaves (see
\parref{par:duality}).

There is also a version of \theoremref{thm:structure}, relating mixed Hodge modules
and admissible variations of mixed Hodge structure \cite[Theorem~3.27]{Saito-MHM}.
Suppose that $(M, W_{\bullet}
M)$ is a graded-polarizable mixed Hodge module on a complex manifold $X$, and let
$Z \subseteq X$ be an irreducible component of the support of $M$. After restricting
to a suitable Zariski-open subset of $Z$, we obtain a graded-polarizable variation
of mixed Hodge structure; it is not hard to deduce from the definition that it must be
admissible relative to $Z$. The converse is also true, but much harder to prove.

\begin{theorem} \label{thm:extendable}
Let $X$ be a complex manifold, and $Z \subseteq X$ an irreducible closed analytic
subvariety of $X$. A graded-polarizable variation of mixed Hodge structure on a
Zariski-open subset of $Z$ can be extended to a mixed Hodge module on $X$ if and only
if it is admissible relative to $Z$.
\end{theorem}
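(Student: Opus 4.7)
The plan is to mirror the proof of \theoremref{thm:structure} in the mixed setting, carrying the weight filtration and the admissibility data through each step. The ``only if'' direction is essentially definitional: for $M \in \MHM{X}$ with support component $Z$ and a smooth Zariski-open $Z^{\circ} \subseteq Z$ on which $\rat M$ becomes a local system, conditions \eqref{eq:MHM-def-1}--\eqref{eq:MHM-def-2} applied to local equations of $Z \setminus Z^{\circ}$ specialize to the Steenbrink--Zucker admissibility axioms for the underlying VMHS.

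For the ``if'' direction, I would first reduce to a normal crossing model. Choose an embedded resolution $\pi \colon \Zt \to X$ that is an isomorphism over the locus $U \subseteq Z$ where the admissible VMHS $(\shV, F^{\bullet}\shV, W_{\bullet} V)$ is defined, and such that $D = \Zt \setminus U$ is a simple normal crossing divisor. A direct image theorem for mixed Hodge modules, obtained by combining the strictness from \theoremref{thm:direct-image} applied weight-graded-piece by weight-graded-piece with the $E_2$-degeneration of \parref{par:MHW} and an admissibility check for the pushforward, then reduces the global problem to extending the admissible VMHS to an object of $\MHMp{\Zt}$; the desired extension on $X$ is the summand of $\shH^0 \pi_{\ast}$ with strict support $Z$.

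On $\Zt$, I would imitate the construction of \parref{par:structure-proof}: take $\Mmod$ to be the $\Dmod_{\Zt}$-submodule of Deligne's meromorphic extension $\shVt$ generated by the lattice $\shVt^{(-1,0]}$, equip it with the same Hodge filtration formula as in the pure case, and define $W_{\bullet}\Mmod$ via the relative monodromy filtration of the admissible VMHS, which by \cite{Kashiwara-st} exists simultaneously for all commuting nilpotents $N_1,\dots,N_n$ attached to the branches of $D$. Verification that this assembles into an object of $\MHMp{\Zt}$ then proceeds by induction on the length of $W_{\bullet}$ and the dimension of the support: each $\gr_{\ell}^W\! M$ is a polarizable VHS and so extends to $\HMp{\Zt}{\ell}$ by \theoremref{thm:structure}, while stability of $\MHMp{\Zt}$ under subquotients in $\MHWp{\Zt}$ -- which rests on the strictness of morphisms with respect to both $F$ and $W$ -- propagates membership across extensions.

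The main obstacle, and the analytic heart of the argument, is verifying \eqref{eq:MHM-def-1}--\eqref{eq:MHM-def-3} along every locally defined holomorphic function on $\Zt$. Via the normal-crossing-type formalism of \parref{par:structure-proof}, this reduces to the case of a monomial $f = t_1^{m_1}\dotsm t_n^{m_n}$, where the combinatorial data for $\psi_f M$ and $\phi_{f,1} M$ can be written down explicitly in terms of the commuting nilpotents $N_i$ and the limit mixed Hodge structures supplied by admissibility. The nontrivial point is that the relative monodromy filtration for $N_f = \sum_i m_i N_i$ acting on the naive limit filtration exists and that its graded pieces again carry mixed Hodge structures; this is precisely Kashiwara's theorem on relative weight filtrations for admissible nilpotent orbits \cite{Kashiwara-st}, and it is the input that cannot be made purely formal. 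Once it is in hand, the recursive definition of $\MHM{\Zt}$ propagates the conclusion to arbitrary locally defined $f$, and the reduction via $\pi$ closes the proof.
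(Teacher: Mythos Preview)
The paper does not prove this theorem; it only states it and cites \cite[Theorem~3.27]{Saito-MHM}. So there is no in-text argument to compare against, and what follows measures your sketch against Saito's actual route.

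Your overall plan---reduce to a normal crossing model, construct the extension there, push forward---is in the right spirit, but there is a concrete error in the local construction on $\Zt$. You write ``define $W_{\bullet}\Mmod$ via the relative monodromy filtration of the admissible VMHS, which \dots\ exists simultaneously for all commuting nilpotents $N_1,\dots,N_n$''. This conflates two different filtrations living on two different objects. The relative monodromy filtration is a filtration on the \emph{nearby cycles} $\psi_f M$---that is exactly its role in \eqref{eq:MHM-def-2}---and the nilpotents $N_i$ act on the limit at the boundary, not on the local system over $U$; there is no ``relative monodromy filtration of the VMHS'' to speak of. The weight filtration you actually need on $M$ must restrict over $U$ to the given $W_{\bullet}V$, with $\gr_{\ell}^W\! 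M$ equal to the pure Hodge module that \theoremref{thm:structure} produces from the polarizable VHS $\gr_{\ell}^W\! V$. Your next sentence (``each $\gr_{\ell}^W\! M$ is a polarizable VHS and so extends \dots'') already presupposes this correct $W$, which your stated definition does not supply.

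Producing such a $W_{\bullet}M$ is precisely where the mixed case diverges from the pure one: the intermediate extension $j_{!\ast}$ is not exact, so one cannot simply set $W_k M = j_{!\ast}(W_kV)$, and your IC-type construction of $\Mmod$ gives no obvious candidate. Saito's approach is instead to build $j_{\ast}$ and $j_!$ first, using Beilinson's maximal extension and the gluing formalism (the paper sketches this immediately after the theorem), where the weight filtration \emph{can} be written down, and then to recover other extensions as subquotients using the stability of $\MHMp{X}$ under subquotients in $\MHWp{X}$. Your sketch bypasses this step and is left without a well-defined weight filtration on the extension.
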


Another important result is the description of mixed Hodge modules by a ``gluing''
procedure, similar to the case of perverse sheaves \cite{Verdier,Beilinson}. Suppose
that $X$ is a complex manifold, and $Z \subseteq X$ a closed analytic subvariety; we
denote by $j
\colon X \setminus Z \into X$ the inclusion of the open complement. Saito shows that
a mixed Hodge module $M \in \MHMp{X}$ can be reconstructed from the following
information: its restriction to $X \setminus Z$; a mixed Hodge module on $Z$; and
some gluing data. Let me explain how this works in the special case where
$Z = f^{-1}(0)$ is the zero locus of a holomorphic function $f \colon X \to \CC$.
Here one can show that $M \in \MHMp{X}$ is uniquely determined by the quadruple
\[
	\bigl( j^{-1} M, \phi_{f,1} M, \can, \var \bigr).
\]
Recall that $\can \colon \psi_{f,1} M \to \phi_{f,1} M$ and $\var \colon \phi_{f,1} M \to
\psi_{f,1} M(-1)$ are morphisms of mixed Hodge modules, and that the unipotent nearby
cycles $\psi_{f,1} M$ only depend on $j^{-1} M$. This suggests considering all
quadruples of the form
\[
	\bigl( M', M'', u, v \bigr),
\]
where $M' \in \MHMp{X \setminus Z}$ is \define{extendable} to $X$; where $M'' \in
\MHMp{Z}$; and where
\[
	u \colon \psi_{f,1} M' \to M'' \quad \text{and} \quad
		v \colon M'' \to \psi_{f,1} M'(-1)
\]
are morphisms of mixed Hodge modules with the property that $v \circ u = N$.
We denote the category of such quadruples (with the obvious
morphisms) by the symbol $\MHMpex{X \setminus Z}{Z}$. One has the following result
\cite[Proposition~2.28]{Saito-MHM}.

\begin{theorem}
The functor
\[
	\MHMp{X} \to \MHMpex{X \setminus Z}{Z}, \quad
		M \mapsto \bigl( j^{-1} M, \phi_{f,1} M, \can, \var \bigr),
\]
is an equivalence of categories.
\end{theorem}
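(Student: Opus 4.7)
The strategy is to lift Beilinson's gluing construction for perverse sheaves \cite{Beilinson} to the setting of graded-polarizable mixed Hodge modules. Since the underlying-perverse-sheaf functor $\rat$ is faithful and exact, faithfulness of the functor in the theorem reduces to Beilinson's classical statement; the substance of the argument is to produce a quasi-inverse that respects the Hodge structure.

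The first step is to upgrade the unipotent nearby cycle functor $\psi_{f,1}$ and Beilinson's maximal extension functor $\Xi_f$ to the setting of extendable objects of $\MHMp{X \setminus Z}$ with values in $\MHMp{X}$ supported along $Z$. The construction of $\Xi_f M'$ in this setting proceeds as in the classical case by a limit procedure over the canonical morphisms between nearby cycles of twisted modules $M' \otimes \shE^s$: one exhibits $\Xi_f M'$ as an iterated extension of unipotent nearby cycles fitting into a canonical short exact sequence of mixed Hodge modules
\[
0 \to \psi_{f,1} M'(-1) \to \Xi_f M' \to \psi_{f,1} M' \to 0,
\]
whose outer terms carry the Hodge-theoretic data corresponding to $\var$ and $\can$. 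That the result lies in $\MHMp{X}$ (and not merely in $\MHWp{X}$) is nontrivial and rests on the admissibility conditions \eqref{eq:MHM-def-1} and \eqref{eq:MHM-def-2} built into the definition of mixed Hodge modules, applied to $M'$ and its twists.

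Given $\Xi_f$, define the quasi-inverse $G$ on a quadruple $(M', M'', u, v)$ as the middle cohomology of the Beilinson complex
\[
\psi_{f,1} M' \xrightarrow{(\can,\, u)} \Xi_f M' \oplus M'' \xrightarrow{(\var,\, -v)} \psi_{f,1} M'(-1),
\]
which is indeed a complex thanks to the identity $v \circ u = N = \var \circ \can$. The main obstacle is proving that $G(M', M'', u, v)$ is a mixed Hodge module, that is, that it satisfies \eqref{eq:MHM-def-1}--\eqref{eq:MHM-def-3} along every locally defined holomorphic function, not only along $f$. Along $f$ this is immediate from the construction, which reproduces the prescribed unipotent nearby and vanishing data. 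For a general function $g$, one invokes the compatibility of $\psi_g$ and $\phi_{g,1}$ with $\psi_{f,1}$, $\phi_{f,1}$ and $\Xi_f$, reducing the admissibility of $G(M', M'', u, v)$ along $g$ to that of $M'$ and $M''$ along suitable functions on $X \setminus Z$ and $Z$. The stability of $\MHMp{X}$ under kernels and cokernels within $\MHWp{X}$ then propagates the conclusion from the three terms of the Beilinson complex to the middle cohomology.

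Once $G$ is established as a functor into $\MHMp{X}$, the equivalence is formal. For $M \in \MHMp{X}$, the Beilinson complex built from $(j^{-1}M, \phi_{f,1}M, \can, \var)$ is a canonical resolution of $M$ on the level of perverse sheaves; since every arrow involved lifts to a morphism of mixed Hodge modules, the resulting isomorphism $M \simeq G(j^{-1}M, \phi_{f,1}M, \can, \var)$ is an isomorphism in $\MHMp{X}$. Conversely, applying $(j^{-1}, \phi_{f,1}, \can, \var)$ to the middle cohomology of the Beilinson complex returns the original $M'$, because $j^{-1}\psi_{f,1} M' = 0$ and $j^{-1}\Xi_f M' \simeq M'$, and returns $M''$ together with the prescribed $u$ and $v$, because $\phi_{f,1}$ annihilates $\Xi_f M'$ (once one accounts for its structure via the defining short exact sequence) and isolates $M''$ in the complex. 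This yields the second natural isomorphism and completes the equivalence.
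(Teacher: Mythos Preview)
Your overall strategy---lifting Beilinson's gluing construction via the maximal extension functor $\Xi_f$ to mixed Hodge modules---is exactly what the paper indicates: its proof consists of the single sentence ``The proof is essentially the same as in the case of perverse sheaves \cite{Beilinson}, and makes use of Beilinson's maximal extension functor.'' So at the level of approach there is nothing to compare.

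However, two of the concrete properties you attribute to $\Xi_f$ are wrong, and since the argument hinges on them you should fix them. First, the short exact sequence you display,
\[
0 \to \psi_{f,1} M'(-1) \to \Xi_f M' \to \psi_{f,1} M' \to 0,
\]
cannot be right: both outer terms are supported on $Z$, whereas $\Xi_f M'$ is by construction an object on $X$ with $j^{-1}\Xi_f M' \simeq M'$. The correct exact sequences in the Hodge-module setting are the lifts of Beilinson's,
\[
0 \to \jls M' \to \Xi_f M' \to \psi_{f,1} M' \to 0
\qquad\text{and}\qquad
0 \to \psi_{f,1} M'(-1) \to \Xi_f M' \to \jl M' \to 0,
\]
and it is these that supply the canonical morphisms $\alpha,\beta$ used in the Beilinson complex. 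Second, the claim that ``$\phi_{f,1}$ annihilates $\Xi_f M'$'' is false; rather $\phi_{f,1}\Xi_f M' \simeq \psi_{f,1} M'$ (with $\can$ and $\var$ becoming the identity and $N$, or vice versa, depending on conventions). It is precisely this identification, not vanishing, that makes the computation of $\phi_{f,1}$ of the middle cohomology return $M''$ with the prescribed $u$ and $v$. Once you replace these two statements by the correct ones, the rest of your outline goes through.
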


The proof is essentially the same as in the case of perverse sheaves
\cite{Beilinson}, and makes use of Beilinson's maximal extension functor. There is a
similar theorem for more general $Z \subseteq X$, in terms of Verdier specialization.
This result can be used to define the two functors
\[
	\jl j^{-1} \colon \MHMp{X} \to \MHMp{X} \quad \text{and} \quad
		\jls j^{-1} \colon \MHMp{X} \to \MHMp{X},
\]
in a way that is compatible with the corresponding functors for perverse sheaves.
As the notation suggests, $\jl j^{-1} M$ and $\jls j^{-1} M$ only depend on the
restriction of $M$ to $X \setminus Z$, and can thus be considered as functors
from extendable mixed Hodge modules on $X \setminus Z$ to mixed Hodge modules on $X$.

\begin{example}
In the special case considered above, $\jl j^{-1} M$ is the mixed Hodge
module corresponding to the quadruple $\bigl( j^{-1} M, \psi_{f,1} M(-1), N, \id
\bigr)$, and $\jls j^{-1} M$ the one corresponding to $\bigl( j^{-1} M, \psi_{f,1} M,
\id, N \bigr)$.
\end{example}

Note that an arbitrary graded-polarizable mixed Hodge module on $X \setminus Z$ may
not be extendable to $X$: for graded-polarizable variations of mixed Hodge structure,
for example, extendability is equivalent to admissibility (by
\theoremref{thm:extendable}). This is why one cannot define $\jl$ and $\jls$ as
functors from $\MHMp{X \setminus Z}$ to $\MHMp{X}$.

\subsection{Algebraic mixed Hodge modules}

In the case where $X$ is a complex algebraic variety, it is more natural to consider
only algebraic mixed Hodge modules on $X$. Choose a \define{compactification} of $X$,
meaning a proper algebraic variety $\Xb$ containing $X$ as a dense Zariski-open
subset. Then according to Serre's G.A.G.A.~theorem, a coherent sheaf on the
associated analytic space $\Xan$ is algebraic if and only if it is the restriction of
a coherent sheaf from $\Xban$; this condition is independent of the choice of
$\Xb$. A similar result holds for coherent filtered $\Dmod$-modules.
This suggests the following definition.

\begin{definition} \label{def:MHM-algebraic}
Let $X$ be a complex algebraic variety. The category $\MHMalg{X}$ of 
\define{algebraic mixed Hodge modules} is defined as the image of $\MHMp{\Xban}$ under
the natural restriction functor $\MHMp{\Xban} \to \MHMp{\Xan}$.
\end{definition}

One can show that the resulting category is independent of the choice of $\Xb$; the
reason is that any two complete algebraic varieties containing $X$ as a dense
Zariski-open subset are birationally equivalent. When $M$ is an algebraic mixed Hodge
module, the perverse sheaf $\rat M$ on $\Xan$ is constructible with respect to an
algebraic stratification of $X$, and the coherent sheaves $F_{\bullet} \Mmod$ are
algebraic; this justifies the name. It is important to remember that algebraic mixed
Hodge modules are, by definition, polarizable and extendable.

\begin{exercise}
Use the structure theorem to prove that every polarizable variation of Hodge
structure on an algebraic variety $X$ is an object of $\MHMalg{X}$.
\end{exercise}

A drawback of the above definition is that it involves choosing a complete variety
containing $X$. Fortunately, Saito \cite{Saito-new} has recently published an
elegant intrinsic description of the category $\MHMalg{X}$. We shall state it as a
theorem here; in an alternative treatment of the theory, it could perhaps become
the definition.

\begin{theorem} \label{thm:simplified}
A weakly mixed Hodge module $(M, W_{\bullet} M) \in \MHW{\Xan}$ belongs to the category
$\MHMalg{X}$ if and only if $X$ can be covered by Zariski-open subsets $U$ with the
following four properties:
\begin{enumerate}
\item There exists $f \in \OX(U)$ such that $U \setminus f^{-1}(0)$ is smooth and dense
in $U$.
\item The restriction of $(M, W_{\bullet} M)$ to the open subset $\Uan \setminus f^{-1}(0)$
is a graded-polarizable admissible variation of mixed Hodge structure.
\item The pair $(M, W_{\bullet} M)$ is admissible along $f = 0$.
\item The pair $\bigl( \psi_f M, W_{\bullet}(\psi_f M) \bigr)$ belongs to 
$\MHMalgg{f^{-1}(0)}$.
\end{enumerate}
\end{theorem}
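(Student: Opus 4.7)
The plan is to establish both directions by working locally and invoking the gluing theorem stated just before \theoremref{thm:simplified}, combined with \theoremref{thm:extendable}. For the easy direction, suppose $M \in \MHMalg{X}$ and fix a global extension $\bar M \in \MHMp{\Xban}$. Given any point of $X$, choose an affine open $U$ containing it together with $f \in \OX(U)$ whose zero locus contains both the non-smooth locus of $U$ and the locus where the underlying perverse sheaf $\rat M$ fails to be a local system; condition (1) is then automatic. Condition (2) follows from \theoremref{thm:extendable}: on $\Uan \setminus f^{-1}(0)$ the perverse sheaf is a local system, and $M$ there is the mixed Hodge module associated with a graded-polarizable variation, which must be admissible in the algebraic sense because $M$ extends all the way to the compact $\Xban$. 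Condition (3) is part of the definition of mixed Hodge module on $\Uan$. For (4), after replacing $\Xban$ by a proper modification so that $f$ extends to a regular function $\bar f$ on an open subset containing the closure of $f^{-1}(0)$, the nearby cycles $\psi_{\bar f} \bar M$ form a mixed Hodge module on a proper subvariety containing $f^{-1}(0)$ as a dense open and restricting to $\psi_f M$; this places $\psi_f M$ in $\MHMalgg{f^{-1}(0)}$ directly from the definition.

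For the converse, assume conditions (1)--(4) hold on a cover $\{U_i, f_i\}$ of $X$. One first verifies that $(M, W_\bullet M) \in \MHMp{\Xan}$: the recursive conditions of \parref{par:def-MHM} must be checked for every locally defined function $g$, but by a standard stratification argument it suffices to verify them along each $f_i$, where they follow from (3), from the mixed Hodge module structure on $\psi_{f_i} M$ supplied by (4), and from the fact that $M|_{U_i \setminus f_i^{-1}(0)}$ is an admissible variation of mixed Hodge structure by (2). The main task is then to produce an extension $\bar M \in \MHMp{\Xban}$ restricting to $M$. Near each point $x \in \Xban$, choose a Zariski-open neighborhood $V \ni x$ and an $i$ with $V \cap X \subseteq U_i$; after replacing $\Xban$ by a further proper modification, extend $f_i$ to a regular function $\tilde f$ on $V$ whose vanishing locus contains $V \setminus X$. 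Then $V \setminus \tilde f^{-1}(0) \subseteq X \setminus f_i^{-1}(0)$, so $M|_{V \setminus \tilde f^{-1}(0)}$ is a graded-polarizable admissible variation, and \theoremref{thm:extendable} produces a unique extension $N \in \MHMp{V}$. By (4), the algebraic mixed Hodge module $\psi_{f_i} M$ extends, after possibly enlarging the modification, to a mixed Hodge module $Q$ on $\tilde f^{-1}(0) \cap V$. Feeding $N$, $Q$, and the canonical and variation morphisms supplied by conditions (3) and (4) into the gluing equivalence
\[
  \MHMp{V} \simeq \MHMpex{V \setminus \tilde f^{-1}(0)}{\tilde f^{-1}(0)}
\]
yields a local extension $\bar M_V \in \MHMp{V}$ with $\bar M_V|_{V \cap X} = M|_{V \cap X}$. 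The uniqueness of the extensions supplied by \theoremref{thm:extendable} guarantees that the $\bar M_V$ agree on overlaps and glue to the desired $\bar M \in \MHMp{\Xban}$.

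The principal obstacle is the local gluing step at the boundary: one must check that the quadruple $(N, Q, \can, \var)$, with the canonical and variation morphisms extended from $V \cap X$ to $V$, genuinely produces a mixed Hodge module on $V$ and not merely a weakly mixed one. Concretely, the relative monodromy filtration condition \eqref{eq:MHM-def-2} along the added boundary divisor must be preserved, and the compatibility $\var \circ \can = N$ must continue to hold across $\tilde f^{-1}(0) \cap (V \setminus X)$. Both follow from the admissibility assumption in (3) combined with Schmid's $\SL(2)$-orbit theorem, which controls the asymptotic behavior of the Hodge and weight filtrations, and from the compatibility of Beilinson's maximal extension functor with algebraic data. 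The subsidiary point, that the extension $Q$ is independent of the choices of $V$ and $\tilde f$ so that the local pieces really glue, is handled by the uniqueness part of \theoremref{thm:extendable} applied to the admissible variation underlying $Q$ on a dense open of $\tilde f^{-1}(0) \cap X$.
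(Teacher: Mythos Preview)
The paper does not actually prove \theoremref{thm:simplified}; it is stated with a reference to Saito's recent preprint \cite{Saito-new}, and the only indication of the argument is the remark that ``the point behind \theoremref{thm:simplified} is precisely that every graded-polarizable admissible variation of mixed Hodge structure on $\Xan$ can be extended to an object of $\MHMp{\Xban}$''. So there is no detailed proof in the paper to compare against, only this hint that \theoremref{thm:extendable} is the essential input.

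Your proposal has the right overall architecture---use \theoremref{thm:extendable} together with gluing---but there are genuine gaps. The most serious is the sentence ``by a standard stratification argument it suffices to verify them along each $f_i$''. This is not standard at all: the recursive definition in \parref{par:def-MHM} demands that admissibility and the mixed-Hodge-module property of nearby and vanishing cycles hold for \emph{every} locally defined holomorphic function $g$, not just for one carefully chosen $f_i$ per chart. Reducing the check from all $g$ to a single $f_i$ is essentially the content of the theorem, and it requires the full force of \theoremref{thm:extendable} and the structure theorem, not a stratification bookkeeping argument. You cannot simply assert it.

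A second issue is that your gluing step is muddled about nearby versus vanishing cycles. The equivalence $\MHMp{V} \simeq \MHMpex{V \setminus \tilde f^{-1}(0)}{\tilde f^{-1}(0)}$ takes as input the quadruple $(j^{-1} M, \phi_{f,1} M, \can, \var)$, with the \emph{vanishing} cycles in the second slot; condition (4) hands you the \emph{nearby} cycles $\psi_f M$. You would need to produce $\phi_{f,1}$ and the morphisms $\can, \var$ from this, which is possible but not automatic, and you have not explained how. Relatedly, your claim that \theoremref{thm:extendable} ``produces a unique extension $N \in \MHMp{V}$'' is false as stated: admissible variations admit many extensions (e.g.\ $\jl$ versus $\jls$), and the gluing equivalence uses the restriction $j^{-1} M$ itself, not a chosen extension of it. These confusions would have to be sorted out before the argument could be made rigorous.
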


Note that if one takes the conditions in the theorem as the definition $\MHMalg{X}$,
then every graded-polarizable admissible variation of mixed Hodge structure on $X$ is
automatically an algebraic mixed Hodge module (because $f^{-1}(0)$ is allowed to be
empty). In fact, the point behind \theoremref{thm:simplified} is precisely that
every graded-polarizable admissible variation of mixed Hodge structure on $\Xan$ can be
extended to an object of $\MHMp{\Xban}$. 

\subsection{Derived categories and weights}

As mentioned in the introduction, Saito's theory is more satisfactory in the case of
algebraic varieties. The reason is that admissibility is independent of the
compactification (because any two compactifications are birationally
equivalent); and that one has direct image functors for arbitrary morphisms
(because every morphism can be factored into the composition of an open embedding and
a proper morphism).

We denote by $\Db \MHMalg{X}$ the bounded derived category of the abelian category
$\MHMalg{X}$; its objects are bounded complexes of algebraic mixed Hodge modules on
$X$. By construction, we have an exact functor
\[
	\rat \colon \Db \MHMalg{X} \to \Dbc(\QQ_{\Xan})
\]
to the bounded derived category of algebraically constructible complexes. The functor
associates to a mixed Hodge module the underlying perverse sheaf; it is
faithful, essentially because a morphism between two mixed Hodge modules is zero if
and only if the corresponding morphism on perverse sheaves is zero.

% \begin{example}
% One never looks at the derived category of pure Hodge modules, because the morphism
% spaces are too large. For example, given two Hodge modules $M_1$ and $M_2$ of
% weights $w_1 > w_2$, morphisms of the form $M_1 \to M_2 \decal{1}$ are the same thing
% as weakly mixed Hodge modules $(M, W_{\bullet} M)$ with $\gr_{w_2}^W M \simeq M_2$ and
% $\gr_{w_1}^W M \simeq M_1$; most objects of this type are not algebraic.
% \end{example}
 
The advantage of working with algebraic varieties is that $\Db \MHMalg{X}$ satisfies 
the same six-functor formalism as in the case of constructible complexes \cite{BBD}.
Setting up this formalism is, however, not a trivial task. The easiest case is that
of the duality functor (see \parref{par:duality}): because
\[
	\DD \colon \MHMalg{X} \to \MHMalg{X}^{\opp}
\]
is exact, it extends without ado to the derived category. As explained in
\parref{par:inverse-image}, we have a collection of cohomological inverse image functors
\[
	\shH^j \fu \colon \MHMalg{Y} \to \MHMalg{X} \quad \text{and} \quad
		\shH^j \fus \colon \MHMalg{Y} \to \MHMalg{X}
\]
for an arbitrary morphism $f \colon X \to Y$ between algebraic varieties; the
functors $\fu$ and $\fus$ on the derived category are defined by working with
\v{C}ech complexes for suitable affine open coverings. One also has a collection of
cohomological direct image functors
\[
	\shH^j \fl \colon \MHMalg{X} \to \MHMalg{Y} \quad \text{and} \quad
		\shH^j \fls \colon \MHMalg{X} \to \MHMalg{Y}.
\]
In the case where $f$ is proper, this follows from \theoremref{thm:direct-image} and
Chow's lemma; in the general case, one factors $f$ into an open embedding $j$ followed by a
proper morphism, and uses the fact that the two functors $\jl$ and $\jls$ are
well-defined for algebraic mixed Hodge modules. The functors $\fl$ and $\fls$ are then again
constructed with the help of affine open coverings, following the method introduced
in \cite{Beilinson}. After all the functors have been constructed, one then has to
prove that they satisfy all the standard compatibility and adjointness relations.

\begin{example}
Once the whole theory is in place, one can use it to put mixed Hodge structures on
the cohomology groups of algebraic varieties. Let $f \colon X \to \pt$ denote the
morphism to a point; then one has a complex of algebraic mixed Hodge modules 
\[
	\QQ_X^H = \fu \QQ(0) \in \Db \MHMalg{X}. 
\]
When $X$ is smooth and $n$-dimensional, $\QQ_X^H \decal{n}$ is concentrated in degree
$0$, and is isomorphic to the pure Hodge module we were denoting by $\QQ_X^H
\decal{n}$ earlier. Now
\[
	H^i(X, \QQ_X^H) = H^i \fl \QQ_X^H \quad \text{and} \quad
		H_c^i(X, \QQ_X^H) = H^i \fls \QQ_X^H
\]
are graded-polarizable mixed Hodge structures on the $\QQ$-vector spaces $H^i(X,
\QQ)$ and $H_c^i(X, \QQ)$; with considerable effort, Saito managed to show that these
mixed Hodge structures are the same as the ones defined by Deligne \cite{Saito-MHC}.
\end{example}

Let me end this chapter by briefly discussing the weight formalism on the derived
category of algebraic mixed Hodge modules; it works in exactly the same way as in the
case of mixed complexes \cite[\S5.1.5]{BBD}.

\begin{definition}
We say that a complex $M \in \Db \MHMalg{X}$ is 
\begin{aenumerate}
\item \define{mixed of weight $\leq w$} if $\gr_i^W \shH^j(M) = 0$ for
$i > j + w$;
\item \define{mixed of weight $\geq w$} if $\gr_i^W \shH^j(M) = 0$ for
$i < j + w$;
\item \define{pure of weight $w$} if $\gr_i^W \shH^j(M) = 0$ for $i \neq j + w$.
\end{aenumerate}
\end{definition}

Saito shows that when $M$ is mixed of weight $\leq w$, both $\fls M$ and $\fu M$ are
again mixed of weight $\leq w$; when $M$ is mixed of weight $\geq w$, both $\fl M$
and $\fus M$ are again mixed of weight $\geq w$. It is also easy to see that $M$ is
mixed of weight $\leq w$ if and only if $\DD M$ is mixed of weight $\geq -w$. In
particular, pure complexes are stable under direct images by proper morphisms, and
under the duality functor. If $M_1$ is mixed of weight $\leq w_1$, and $M_2$ is mixed
of weight $\geq w_2$, then $\Ext^i(M_1, M_2) = 0$ for $i > w_1 - w_2$. This implies
formally that every pure complex splits into the direct sum of its cohomology
modules. The resulting non-canonical isomorphism
\[
	M \simeq \bigoplus_{j \in \ZZ} \shH^j(M) \decal{-j} \in \Db \MHMalg{X}
\]
gives another perspective on the decomposition theorem.

\section{Two applications}

The two results in this chapter are among the first applications of Saito's theory;
the interested reader can find more details in an article by Masahiko Saito
\cite{RIMS}.

\subsection{Saito's vanishing theorem}

In \cite[\S2.g]{Saito-MHM}, Saito proved the following vanishing theorem
for mixed Hodge modules. It contains many familiar results in algebraic geometry as
special cases: for example, if $X$ is an $n$-dimensional smooth projective variety,
and if we take $M = \QQ_X^H \decal{n}$, then 
\[
	\gr_{-p}^F \DR(\omX) \simeq \OmX^p \decal{n-p}, 
\]
and Saito's result specializes to the Kodaira-Nakano vanishing theorem.

\begin{theorem}[Vanishing Theorem] \label{thm:vanishing}
Suppose that $(\Mmod, F_{\bullet} \Mmod)$ underlies a mixed Hodge module $M$ on a
projective algebraic variety $X$. Then one has
\begin{align*}
	H^i \Bigl( X, \gr_k^F \DR(\Mmod) \tensor L \Bigr) = 0 \quad &\text{if $i>0$,} \\
	H^i \Bigl( X, \gr_k^F \DR(\Mmod) \tensor L^{-1} \Bigr) = 0 \quad &\text{if $i<0$,}
\end{align*}
where $L$ is an arbitrary ample line bundle on $X$.
\end{theorem}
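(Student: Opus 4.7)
The plan is to combine Saito's direct image theorem (\theoremref{thm:direct-image}) with a cyclic covering trick and an induction on the dimension of the support of $M$, reducing ultimately to classical Kodaira-type vanishing for polarizable variations of Hodge structure.

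First I would reduce the two vanishings to one: under the duality functor $\DD$ on $\MHMalg{X}$, the graded de Rham complex transforms compatibly with Grothendieck--Serre duality, so by Serre duality the two claimed vanishings are equivalent, and it suffices to prove $H^i(X, \gr_k^F \DR(\Mmod) \otimes L) = 0$ for $i > 0$. Then via the weight filtration I would reduce to the case where $M$ is pure: the short exact sequences $0 \to W_{\ell-1} M \to W_\ell M \to \gr_\ell^W M \to 0$ and the associated long exact hypercohomology sequences transfer the problem to the graded pieces. By \theoremref{thm:structure}, a pure polarizable $M$ with strict support $Z$ is the canonical extension of a polarizable variation of Hodge structure on a Zariski-open subset of $Z$.

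Next I would induct on $d = \dim Z$. For $d = 0$ the statement is trivial. For $d > 0$, choose $N$ so that $L^N$ is very ample and (by Bertini) possesses a smooth divisor $D$ meeting $Z$ transversally; form the degree-$N$ cyclic cover $\pi \colon Y \to X$ branched along $D$. Then $Y$ is smooth projective, $\pi$ is finite and flat with $\pi_\ast \shO_Y \simeq \bigoplus_{i=0}^{N-1} L^{-i}$, and $A \defeq \pi^\ast L$ is ample on $Y$. The direct image theorem, applied to $\pi$, makes the pushforward of the filtered $\Dmod$-module underlying $\pi^\ast M$ strict, and the projection formula gives
\[
	\pi_\ast \bigl( \gr_k^F \DR(\pi^\ast M) \otimes A \bigr) \simeq \bigoplus_{i=0}^{N-1} \gr_k^F \DR(\Mmod) \otimes L^{1-i},
\]
exhibiting $H^j(X, \gr_k^F \DR(\Mmod) \otimes L)$ as a direct summand of $H^j(Y, \gr_k^F \DR(\pi^\ast M) \otimes A)$. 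Combined with a Lefschetz-style restriction to a smooth hyperplane section $H \cap Z$---implemented via Saito's filtered inverse image functor and the long exact sequence for $H \hookrightarrow X$ and its complement---this drops $d$ by one; the base of the induction, a polarizable variation of Hodge structure on a smooth projective variety, is then handled by classical Kodaira-type vanishing in the style of Koll\'ar, Kawamata, and Kashiwara.

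The main obstacle is controlling the compatibility among the various graded de Rham complexes that arise when passing between $M$, $\pi^\ast M$, and the restriction to $H$. Saito's theory guarantees strictness of the direct image and the existence of a rational $V$-filtration, but tracking $\gr_k^F \DR$ under restriction requires careful bookkeeping of the pole-order filtration near $D$ and the nearby and vanishing cycle decompositions encoded in \definitionref{def:strict-spec}. Without the $E_1$-degeneration provided by the direct image theorem, these comparisons would fail to yield clean cohomological summand decompositions, and the induction would collapse.
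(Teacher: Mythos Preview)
Your cyclic-cover setup is the right opening move and matches Saito's (and Ramanujam's) method, but the argument has a genuine gap at the step where you try to extract the vanishing. The projection formula you wrote exhibits $H^j\bigl(X, \gr_k^F \DR(\Mmod) \otimes L\bigr)$ as a direct summand of $H^j\bigl(Y, \gr_k^F \DR(\piu M) \otimes A\bigr)$, but the latter group is of exactly the same shape: a mixed Hodge module on a projective variety of the \emph{same} dimension, tensored with an ample line bundle. Nothing has been reduced, and the subsequent ``Lefschetz-style restriction'' is too vague to close the circle. Your stated base case, ``a polarizable variation of Hodge structure on a smooth projective variety handled by classical Kodaira-type vanishing,'' is both inconsistent with the induction you set up (it has full-dimensional support, not $\dim Z = 0$) and essentially assumes the special case $M = \QQ_X^H\decal{n}$ of the theorem you are proving.

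What makes the cyclic cover work in the paper is a different mechanism. The summand one actually isolates inside $\pil\piu M$ is not $\Mmod \otimes L^{-1}$ but $\Mmod(\ast H) \otimes L^{-1}$, the localization along the branch divisor. The point is that $\Mmod(\ast H)$ underlies the mixed Hodge module $\jl j^{-1} M$ for $j \colon X \setminus H \into X$, and since $Y \setminus \pi^{-1}(H \cap X)$ is \emph{affine}, Artin's vanishing theorem for self-dual perverse sheaves forces the hypercohomology of $\gr_k^F \DR\bigl(\Mmod(\ast H)\bigr) \otimes L^{-1}$ to live only in degree zero. One then passes from $\Mmod(\ast H)$ back to $\Mmod$ via the short exact sequence
\[
0 \to \Mmod \to \Mmod(\ast H) \to \ip\bigl(\Mmod\restr{H}\bigr) \to 0,
\]
and this is where the induction on dimension enters: the third term is supported on $H$. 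So the missing ingredients are (i) the localization $\Mmod(\ast H)$ rather than $\Mmod$ itself, (ii) Artin's vanishing on the affine complement as the actual source of the vanishing, and (iii) the localization exact sequence as the inductive device.
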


Recall from the discussion in \parref{par:Kashiwara} that $(\Mmod, F_{\bullet}
\Mmod)$ lives on some ambient projective space; nevertheless, each
$\gr_k^F \DR(\Mmod)$ is a well-defined complex of coherent sheaves on $X$,
independent of the choice of embedding (by \exerciseref{ex:DR}).

The proof of \theoremref{thm:vanishing} is similar to Ramanujam's method for deducing
the Kodaira vanishing theorem from the weak Lefschetz theorem \cite{Ramanujam}.
Choose a sufficiently large integer $d \geq 2$ that makes $L^d$ very ample, and let
$i \colon X \into \PP^N$ denote the corresponding embedding into projective space; we
can assume that $M \in \MHM{\PP^N}$, with $\Supp M \subseteq X$. Let $H \subseteq
\PP^N$ be a generic hyperplane; then $H \cap X$ is the divisor of a section of $L^d$,
and therefore determines a branched covering $\pi \colon Y \to X$ of degree $d$ to
which one can pull back $M$. Then one can show that the $\Dmod$-module underlying
$\pil \piu M$ contains a summand isomorphic to $\Mmod(\ast H) \tensor L^{-1}$; this
implies the vanishing of
\[
	H^i \Bigl( X, \gr_k^F \DR \bigl( \Mmod(\ast H) \bigr) \tensor L^{-1} \Bigr)
\]
for $i \neq 0$. In fact, the formalism of mixed Hodge modules reduces this to Artin's
vanishing theorem: on the affine variety $Y \setminus \pi^{-1}(H \cap X)$, the
cohomology of a self-dual perverse sheaf vanishes in all degrees $\neq 0$. One can
now deduce the desired vanishing for $\gr_k^F \DR(\Mmod) \tensor L^{-1}$ from the
exact sequence
\[
	0 \to \Mmod \to \Mmod(\ast H) \to \ip \Bigl( \Mmod \restr{H} \Bigr) \to 0
\]
by induction on the dimension. A recent article by Popa \cite{Popa} explains the
details of the proof, as well as some applications of Saito's theorem to algebraic
geometry. An alternative proof of \theoremref{thm:vanishing} can be found in
\cite{saito-vanishing}.

\subsection{Koll\'ar's conjecture}
\label{par:Kollar}

In \cite{KollarI,KollarII}, Koll\'ar proved several striking results about the higher
direct images of dualizing sheaves: when $f \colon X \to Y$ is a surjective morphism
from a smooth projective variety $X$ to an arbitrary projective variety $Y$,
the sheaves $R^i \fl \omX$ are torsion-free and satisfy a vanishing theorem, and the
complex $\derR \fl \omX$ splits in the derived category $\Dbcoh(\OY)$. He also
conjectured that the same results should hold in much greater generality
\cite[Section~5]{KollarII}. More precisely, he predicted that, for every polarizable
variation of Hodge structure $V$ on a Zariski-open subset of a projective variety
$X$, there should be a coherent sheaf $S(X, V)$ on $X$ with the following
properties:%
\footnote{This is a simplified version; the actual conjecture is more precise.}
\begin{nenumerate}
\item \label{en:Kollar-1}
If $X$ is smooth and $V$ is defined on all of $X$, then $S(X, V) = \omX
\tensor F^p \shV$, where $F^{p+1} \shV = 0$ and $F^p \shV \neq 0$. 
\item \label{en:Kollar-2}
If $L$ is an ample line bundle on $X$, then $H^j \bigl( X, S(X, V) \tensor L
\bigr) = 0$ for $j > 0$.
\item  \label{en:Kollar-4}
If $f \colon X \to Y$ is a surjective morphism to another projective variety $Y$,
then the sheaves $R^i \fl S(X, V)$ are torsion-free.
\item \label{en:Kollar-3}
In the same situation, one has a decomposition
\[
	\derR \fl S(X, V) \simeq \bigoplus_{i=0}^r 
		\bigl( R^i \fl S(X, V) \bigr) \decal{-i},
\]
where $r = \dim X - \dim Y$.
\end{nenumerate}

In fact, Koll\'ar's conjecture follows quite easily from the theory of mixed Hodge
modules. Saito announced the proof with the laconic remark: ``This implies
some conjecture by Koll\'ar, combined with the results in \S3.''
\cite[p.~276]{Saito-MHM}. Fortunately, he later published a more detailed proof \cite{Saito-K}.

The idea is to extend the polarizable variation of Hodge structure $V$ to a
polarizable Hodge module $M = (\Mmod, F_{\bullet} \Mmod, K) \in \HMZp{X}{X}{w}$ with
strict support $X$, by appealing to \theoremref{thm:structure}, and then to define 
\[
	S(X, V) = F_{p(M)} \Mmod,
\]
where $p(M) = \min \menge{p \in \ZZ}{F_p \Mmod \neq 0}$. Recall from
\exerciseref{ex:DR} that $F_{p(M)} \Mmod$ is a well-defined coherent sheaf on $X$,
even though $(\Mmod, F_{\bullet} \Mmod)$ only lives on some ambient projective space.
With this definition of $S(X, V)$, Koll\'ar's predictions become consequences of
general results about polarizable Hodge modules: the formula in \ref{en:Kollar-1}
follows from \theoremref{thm:structure}; the vanishing in \ref{en:Kollar-2} follows
from \theoremref{thm:vanishing}; the splitting of the complex $\derR \fl S(X, V)$
in \ref{en:Kollar-3} follows from \corollaryref{cor:decomposition-MF}; etc. Saito
also found a very pretty argument for showing that the higher direct
image sheaves $R^i \fl S(X, V)$ are torsion-free when $f \colon X \to Y$ is
surjective \cite[Proposition~2.6]{Saito-K}. It is based on the following observation.

\begin{proposition}
Let $f \colon X \to Y$ be a projective morphism between complex manifolds, and let
$M$ be a polarizable Hodge module with strict support $Z \subseteq X$. If $M'$ is any
summand of $\shH^i \fl M$, then $p(M') > p(M)$ unless $\Supp M' = f(Z)$. 
\end{proposition}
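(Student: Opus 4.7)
The plan is to prove the contrapositive: assuming $F_{p(M)} \Mmod' \neq 0$, I will deduce that $\Supp M' = f(Z)$. Write $p_0 = p(M)$. By the direct image theorem (\theoremref{thm:direct-image}) together with its refinement to filtered $\Dmod$-modules (\corollaryref{cor:decomposition-MF}), the complex $\fp(\Mmod, F_\bullet \Mmod)$ is strict, so $\shH^i \fl M$ is a polarizable Hodge module whose decomposition by strict support is inherited on the level of the Hodge filtration; hence
\[
	F_{p_0} \shH^i \fp \Mmod \;=\; \bigoplus_{W \subseteq f(Z)} F_{p_0} \Mmod_W,
\]
and the hypothesis translates into nonvanishing of the summand with $W = \Supp M'$.

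The key technical input is a formula identifying the lowest Hodge filtration piece of the direct image as a single ordinary higher direct image of a coherent $\OX$-module. Because $F_{p_0 - 1} \Mmod = 0$, the filtered relative de Rham complex computing $\fp(\Mmod, F_\bullet \Mmod)$ collapses at the lowest filtered degree, and strictness then yields a natural isomorphism
\[
	F_{p_0} \shH^i \fp \Mmod \;\simeq\; R^{i+r} f_{\ast}\bigl( F_{p_0} \Mmod \tensor_{\OX} \omega_{X/Y} \bigr),
\]
where $r = \dim X - \dim Y$ and $\omega_{X/Y} = \omX \tensor f^{\ast} \omY^{-1}$. In particular, $F_{p_0} \shH^i \fp \Mmod$ is the higher direct image of a coherent sheaf whose support equals $Z$.

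I would then run a rank and support comparison. By the structure theorem (\theoremref{thm:structure}), each nonzero summand $\Mmod_W$ is the extension of a polarized variation of Hodge structure from a dense Zariski-open subset $U_W$ of the smooth locus of $W$; thus, if $F_{p_0} \Mmod_W \neq 0$, then its restriction to $U_W$ is a locally free sheaf of positive generic rank on $W$. On the other hand, flat base change at a generic point $y$ of $f(Z)$ identifies the stalk of $R^{i+r} f_{\ast}(F_{p_0} \Mmod \tensor \omega_{X/Y})$ at $y$ with the cohomology of the restriction of $F_{p_0} \Mmod \tensor \omega_{X/Y}$ to the fiber $f^{-1}(y)$; this generic behavior is determined entirely by the summand $\Mmod_{f(Z)}$ with strict support $f(Z)$, while each $\Mmod_W$ with $W \subsetneq f(Z)$ can only contribute a torsion subsheaf supported on $W$. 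A nonzero $F_{p_0} \Mmod_W$ with positive generic rank on a proper subvariety $W \subsetneq f(Z)$ would therefore sit inside the right-hand side as a torsion summand of positive rank on $W$, incompatible with its being identified as a strict-support piece in the splitting of the higher direct image; the uniqueness of the decomposition by strict support in the semisimple category $\HMp{Y}{w+i}$ then forces $W = f(Z)$.

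The main obstacle is the identification in the second paragraph. Writing the lowest Hodge filtration piece as a single coherent higher direct image, rather than as a piece of a derived-filtered object mixing several filtered degrees, requires the strictness output of \theoremref{thm:direct-image} together with a careful bookkeeping of the filtered direct image for right $\Dmod$-modules, including the relative dualizing twist by $\omega_{X/Y}$ and the appropriate degree shift by $r = \dim X - \dim Y$. Once this formula is in hand, the remainder is a rank/support comparison on $W$ versus $f(Z)$, combined with the rigidity afforded by the uniqueness of the strict-support decomposition.
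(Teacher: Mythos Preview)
Your proposal has a genuine gap in the third paragraph: the ``rank/support comparison'' does not produce a contradiction. You correctly observe that $F_{p_0}\Mmod_W$ would be a direct summand of the higher direct image supported on a proper subvariety $W\subsetneq f(Z)$, but then you assert this is ``incompatible'' with the strict-support decomposition without explaining why. There is no incompatibility: a coherent sheaf of the form $R^i f_\ast(\text{coherent sheaf on }Z)$ can perfectly well have torsion summands supported on proper subvarieties of $f(Z)$. Showing that it does \emph{not}, in this particular situation, is exactly the content of the proposition (and is what makes Koll\'ar's torsion-freeness conjecture nontrivial). Your argument is therefore circular: the step where you exclude a nonzero torsion summand is precisely the statement you are trying to prove.

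A secondary issue: since the paper works with right $\Dmod$-modules, the formula for the lowest Hodge piece is $F_{p_0}\shH^i f_+\Mmod \simeq R^i f_\ast(F_{p_0}\Mmod)$, with no twist by $\omega_{X/Y}$ and no shift by $r$; your version is the left-module formula.

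The paper's argument is entirely different and does not pass through higher direct images of coherent sheaves at all. Instead, Saito chooses a locally defined function $g$ on $Y$ vanishing along $\Supp M'$ but not along $f(Z)$, sets $h=g\circ f$, and uses the compatibility of vanishing cycles with proper direct images to reduce the claim $F_{p(M)}\Mmod'=0$ to the vanishing of $F_{p(M)}\gr_0^V\Mmod_h$. The latter then follows immediately from the structural property of Hodge modules (discussed after \definitionref{def:strict-spec}) that
\[
\partial_t \colon F_{p-1}\gr_{-1}^V\Mmod_h \to F_p\gr_0^V\Mmod_h
\]
is surjective for every $p$, together with $F_{p(M)-1}\Mmod_h=0$. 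This is the missing idea: the torsion-freeness is forced by the quasi-unipotent-and-regular condition on the Hodge filtration along divisors, not by any generic-rank argument on the target.
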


\begin{proof}
Here is a brief outline of the proof. Suppose that $\Supp M' \neq f(Z)$; then it
suffices to show that $F_{p(M)} \Mmod' = 0$. Using the compatibility of the nearby
cycle functor with direct images, one reduces the problem to showing that
\[
	F_{p(M)} \gr_0^V \Mmod_h = 0,
\]
where $V_{\bullet} \Mmod_h$ denotes the $V\!$-filtration with respect to $h = 0$, and
where $h = g \circ f$ for a locally defined holomorphic function $g$ that vanishes
along $\Supp M'$ but not along $f(Z)$. But because $F_{p(M)-1} \Mmod_h = 0$, this
follows from the fact that 
\[
	\partial_t \colon F_{p-1} \gr_{-1}^V \Mmod_h \to F_p \gr_0^V \Mmod_h
\]
is surjective for every $p \in \ZZ$ (see the discussion after
\definitionref{def:strict-spec}).
\end{proof}

\section{Various functors and strictness}

In this final chapter, we are concerned with various functors on mixed Hodge modules, in
particular, the direct image, duality, and inverse image functors. They are
relatively easy to define for perverse sheaves and regular holonomic $\Dmod$-modules,
but the presence of the filtration leads to complications. We shall also discuss some
of the remarkable properties of those filtered $\Dmod$-modules $(\Mmod, F_{\bullet}
\Mmod)$ that underlie mixed Hodge modules -- they are due to the strong restrictions on
the filtration $F_{\bullet} \Mmod$ that are built into the definition.

\subsection{Derived category and strictness}
\label{par:DF}

In this section, we briefly discuss the definition of the derived category of
filtered $\Dmod$-modules, and the important notion of strictness. 
Let $X$ be a complex manifold, and let $\MF(\Dmod_X)$ denote the category of filtered
$\Dmod$-modules; its objects are pairs $(\Mmod, F_{\bullet} \Mmod)$, where $\Mmod$ is
a right $\Dmod_X$-module, and $F_{\bullet} \Mmod$ is a compatible filtration by
$\OX$-submodules. Note that $\MF(\Dmod_X)$ is not an abelian category. It is,
however, an exact category: a sequence of the form
\[
\begin{tikzcd}[column sep=small]
	0 \rar & (\Mmod', F_{\bullet} \Mmod') \rar{u} & (\Mmod, F_{\bullet} \Mmod) 
		\rar{v} & (\Mmod'', F_{\bullet} \Mmod'') \rar & 0,
\end{tikzcd}
\]
is considered to be short exact if and only if $v \circ u = 0$ and 
\[
\begin{tikzcd}[column sep=2em]
	0 \rar & \gr_{\bullet}^F \Mmod' \rar{\gr u} & \gr_{\bullet}^F \Mmod 
		\rar{\gr v} & \gr_{\bullet}^F \Mmod'' \rar & 0,
\end{tikzcd}
\]
is a short exact sequence of graded $\gr_{\bullet}^F \! \Dmod_X$-modules. Acyclic
complexes are defined in a similar way; after localizing at the subcategory of all
acyclic complexes, one obtains the derived category $\DF(\Dmod_X)$. This category
turns out to have a natural t-structure, whose heart is an abelian category
containing $\MF(\Dmod_X)$; more details about this construction can be found in
\cite{Laumon}.

In fact, the category $\DF(\Dmod_X)$ and the t-structure on it can be described in
more concrete terms, starting from the general principle that graded objects form an
abelian category, whereas filtered objects do not. Consider the
\define{Rees algebra} of $(\Dmod_X, F_{\bullet} \Dmod_X)$, defined as
\[
	\Rmod_X = R_F \Dmod_X = \bigoplus_{k = 0}^{\infty} F_k \Dmod_X \cdot z^k;
\]
here $z$ is an auxiliary variable. It is a graded $\OX$-algebra with
\[
	\Rmod_X / (z-1) \Rmod_X \simeq \Dmod_X 
		\quad \text{and} \quad
	\Rmod_X / z \Rmod_X \simeq \gr_{\bullet}^F \! \Dmod_X,
\]
and therefore interpolates between the non-commutative $\OX$-algebra $\Dmod_X$ and its
associated graded $\gr_{\bullet}^F \! \Dmod_X \simeq \Sym^{\bullet} \shT_X$.
By the same method, we can associate with every filtered $\Dmod$-module $(\Mmod,
F_{\bullet} \Mmod)$ a graded $\Rmod_X$-module 
\[
	R_F \Mmod = \bigoplus_{k \in \ZZ} F_k \Mmod \cdot z^k
		\subseteq \Mmod \tensor_{\OX} \OX \lbrack z, z^{-1} \rbrack,
\] 
which is coherent over $\Rmod_X$ exactly when the filtration $F_{\bullet} \Mmod$ is
good. 

\begin{exercise}
Prove that $R_F \Mmod / (z-1) R_F \Mmod \simeq \Mmod$ and $R_F \Mmod / z R_F \Mmod
\simeq \gr_{\bullet}^F \! \Mmod$.
\end{exercise}

This construction defines a functor $R_F \colon \MF(\Dmod_X) \to \MG(\Rmod_X)$
from the category of filtered right $\Dmod$-modules to the category of graded right
$\Rmod$-modules. 

\begin{exercise}
Prove that $R_F \colon \MF(\Dmod_X) \to \MG(\Rmod_X)$ is faithful
and identifies $\MF(\Dmod_X)$ with the subcategory of all graded $\Rmod$-modules
without $z$-torsion.
\end{exercise}

One can show that $R_F$ induces an equivalence of categories
\begin{equation} \label{eq:RF}
	R_F \colon \DbcohF(\Dmod_X) \to \DbcohG(\Rmod_X),
\end{equation}
under which the t-structure on $\DbcohF(\Dmod_X)$ corresponds to the standard
t-structure on $\DbcohG(\Rmod_X)$. From this equivalence, we get a collection of
cohomology functors 
\[
	\shH^i \colon \DbcohF(\Dmod_X) \to \MGcoh(\Rmod_X);
\]
the thing to keep in mind is that $\shH^i$ of a complex of filtered $\Dmod$-modules
is generally \emph{not} a filtered $\Dmod$-module, only a graded $\Rmod$-module. By a
similar procedure, one can define the bounded derived category of filtered regular
holonomic $\Dmod$-modules with $\QQ$-structure; in that case, the cohomology functors
$\shH^i$ go to the abelian category of graded regular holonomic
$\Rmod$-modules with $\QQ$-structure.  

\begin{definition}
A graded $\Rmod$-module is called \define{strict} if it has no $z$-torsion.
More generally, a complex of filtered $\Dmod$-modules is called \define{strict} if,
for every $i \in \ZZ$, the graded $\Rmod$-module obtained by applying $\shH^i$ is
strict.
\end{definition}

In other words, a graded $\Rmod$-module comes from a filtered $\Dmod$-module if and
only if it is strict. Strictness of a complex is equivalent to all differentials in
the complex being strictly compatible with the filtrations.

\begin{exercise}
Show that a complex of filtered $\Dmod$-modules
\[
\begin{tikzcd}[column sep=small]
	(\Mmod', F_{\bullet} \Mmod') \rar{u} & (\Mmod, F_{\bullet} \Mmod) 
		\rar{v} & (\Mmod'', F_{\bullet} \Mmod'')
\end{tikzcd}
\]
is strict at $(\Mmod, F_{\bullet} \Mmod)$ if and only if $u$ is strictly compatible
with the filtrations.
\end{exercise}

The definition of various derived functors (such as the direct image functor or the
duality functor) requires an abelian category, and therefore produces not filtered
$\Dmod$-modules but graded $\Rmod$-modules. For Saito's theory, it is important to
know that they are strict in the case of mixed Hodge modules.

\subsection{Direct image functor}
\label{par:direct-image}

Our first concern is the definition of the direct image functor in the case 
where $f \colon X \to Y$ is a proper morphism between two complex manifolds; by using
local charts, one can extend the definition to the case where $X$ and $Y$ are
analytic spaces. We would like to have an exact functor
\[
	\fp \colon \DbcohF(\Dmod_X) \to \DbcohF(\Dmod_Y)
\]
that is compatible with the direct image functor $\derR \fl$ for constructible
complexes. Saito's construction is based on his theory of filtered differential
complexes \cite[\S2.2]{Saito-HM}; what we shall do here is briefly sketch another one
based on Koszul duality.

Recall first how the elementary construction of $\fp$ works in the case of a single
filtered right $\Dmod$-module $(\Mmod, F_{\bullet} \Mmod)$. Using the factorization
\[
\begin{tikzcd}
	X \rar[hook] \arrow[bend right=40]{rr}{f} & X \times Y \rar{p_2} & Y
\end{tikzcd}
\]
through the graph of $f$, one only has to define the direct image for closed
embeddings and for smooth projections. In the case of a closed embedding $i \colon X
\into Y$, the direct image is again a filtered $\Dmod$-module, given by
\[
	\ip(\Mmod, F_{\bullet} \Mmod) = 
		\il \bigl( \Mmod \tensor_{\Dmod_X} \Dmod_{X \to Y} \bigr),
\]
where $(\Dmod_{X \to Y}, F_{\bullet} \Dmod_{X \to Y}) = \OX \tensor_{i^{-1} \OY}
i^{-1}(\Dmod_Y, F_{\bullet} \Dmod_Y)$. Locally, the embedding is defined by 
holomorphic functions $t_1, \dotsc, t_r$; if we set $\partial_i = \partial/\partial
t_i$, then
\[
	\ip \Mmod \simeq \Mmod \lbrack \partial_1, \dotsc, \partial_r \rbrack,
\]
with filtration given by
\[
	F_k \bigl( \ip \Mmod \bigr) 
		= \sum_{a \in \NN^r} F_{k-(a_1 + \dotsb + a_r)} \Mmod 
			\tensor \partial_1^{a_1} \dotsb \partial_r^{a_r}.
\]
Strictness is clearly not an issue in this situation. In the case of a smooth
projection $p_2 \colon X \times Y \to Y$, the direct image is a complex of filtered
$\Dmod$-modules, given by
\[
	\derR p_{2\ast} \DR_{X \times Y/Y}(\Mmod, F_{\bullet} \Mmod).
\]
If we use the canonical Godement resolution to define $\derR p_{2\ast}$, the
filtration on the complex is given by the subcomplexes
\[
	\derR p_{2\ast} F_k \DR_{X \times Y/Y}(\Mmod, F_{\bullet} \Mmod);
\]
there is no reason why the resulting filtered complex should be strict.

\begin{example}
For a morphism $f \colon X \to \pt$ to a point, strictness of $\fp(\Mmod,
F_{\bullet} \Mmod)$ is equivalent to the $E_1$-degeneration of the Hodge-de Rham
spectral sequence
\[
	E_1^{p,q} = H^{p+q} \bigl( X, \gr_{-p}^F \DR(\Mmod) \bigr)
		\Longrightarrow H^{p+q} \bigl( X, \DR(\Mmod) \bigr).
\]
This degeneration is of course an important issue in classical Hodge theory, too.
\end{example}

A one-step construction of the direct image functor uses the equivalence of
categories in \eqref{eq:RF} and Koszul duality \cite[Theorem~2.12.6]{BGS}; it works
more or less in the same way as Saito's theory of induced $\Dmod$-modules
\cite{Saito-ind}. This construction also explains why direct images are more
naturally defined using right $\Dmod$-modules. 

The key point is that the graded $\OX$-algebra $\Rmod_X$ is a \define{Koszul
algebra}, meaning that a certain Koszul-type complex
constructed from it is exact. It therefore has a Koszul dual $\Kmod_X$, which
is a certain graded $\OX$-algebra constructed from the holomorphic de Rham
complex $(\OmX^{\bullet}, d)$; concretely, $\Kmod_X = \Omega_X^{\bullet} \lbrack d
\rbrack$, with relations $d^2 = 0$ and $\lbrack d, \alpha \rbrack + (-1)^{\deg \alpha} d
\alpha = 0$. Now \define{Koszul duality} gives an equivalence of categories of right
modules
\[
	K_X \colon \DbcohG(\Rmod_X) \to \DbcohG(\Kmod_X).
\]
What makes this useful is that one has a morphism $\Kmod_Y \to \fl \Kmod_X$, and so $\derR
\fl$ of a complex of graded right $\Kmod_X$-modules is naturally a complex of graded
right $\Kmod_Y$-modules. Since $\Kmod_X$ has finite rank as on $\OX$-module, and $f$
is proper, it is also obvious that this operation preserves boundedness and
coherence. If we put everything together, we obtain an exact functor
\[
	\fp \colon \DbcohG(\Rmod_X) \to \DbcohG(\Rmod_Y)
\]
by taking the composition $K_Y^{-1} \circ \derR \fl \circ K_X$.

\begin{example}
Consider the case of a filtered $\Dmod$-module $(\Mmod, F_{\bullet} \Mmod)$.
By definition, 
\[
	\fp(\Mmod, F_{\bullet} \Mmod) = \fp(R_F \Mmod) \in \DbcohG(\Rmod_Y),
\]
and without additional assumptions on the filtration $F_{\bullet} \Mmod$, the graded
$\Rmod_Y$-modules $\shH^i \fp(\Mmod, F_{\bullet} \Mmod)$ may fail to be strict.
\end{example}

\subsection{Strictness of direct images}

As part of \theoremref{thm:direct-image}, Saito proved that when $(\Mmod, F_{\bullet}
\Mmod)$ underlies a polarizable Hodge module on a complex manifold $X$, and $f \colon
X \to Y$ is a projective morphism to another complex manifold $Y$, then the complex
$\fp(\Mmod, F_{\bullet} \Mmod)$ is strict; the result is easily extended to mixed
Hodge modules and to proper morphisms between algebraic varieties.  In particular,
every cohomology module $\shH^i \fp(\Mmod, F_{\bullet} \Mmod)$ is again a filtered
$\Dmod$-module. 

Saito's theorem is a powerful generalization of the fact that, on a smooth projective
variety $X$, the Hodge-de Rham spectral sequence degenerates at $E_1$. It has
numerous important consequences; here we only have room to mention one, namely
\define{Laumon's formula} for the associated graded of the
filtered $\Dmod$-module underlying the Hodge module $\shH^i \fl M$.
Laumon \cite{Laumon-for} described the ``associated graded'' of the complex
$\fp(\Mmod, F_{\bullet} \Mmod) \in \DbcohF(\Dmod_Y)$, which really means the
derived tensor product with $\Rmod_X / z \Rmod_X$. This is not at all the
same thing as the associated graded of the underlying $\Dmod$-modules $\shH^i \fp
\Mmod$, except when the complex is strict. In the case where $M$ is a mixed
Hodge module and $f \colon X \to Y$ is a projective morphism, this leads to the
isomorphism
\[
	\gr_{\bullet}^F \bigl( \shH^i \fp \Mmod \bigr) \simeq
		R^i \fl \Bigl( \gr_{\bullet}^F \! \Mmod 
			\Ltensor_{\Amod_X} \fu \Amod_Y \Bigr),
\]
where $\Amod_X = \gr_{\bullet}^F \! \Dmod_X$. To say this in more geometric terms,
let $\shC(X, M)$ denote the coherent sheaf on the cotangent bundle $T^{\ast} X$
determined by $\gr_{\bullet}^F \! \Mmod$. Then
\[
	\shC(Y, \shH^i \fl M) 
		\simeq R^i p_{1\ast} \Bigl( \derL(\mathit{df})^{\ast} \shC(X, M) \Bigr),
\]
where the notation is as in the following diagram:
\[
\begin{tikzcd}
	T^{\ast} Y \times_Y X \dar{p_1} \rar{\mathit{df}} & T^{\ast} X \\
	T^{\ast} Y 
\end{tikzcd}
\]
Laumon's formula explains, in the case of mixed Hodge modules, how taking the
associated graded interacts with the direct image functor. Saito has proved a similar
statement for the associated graded of the de Rham complex \cite[\S2.3.7]{Saito-HM}.

\begin{theorem}
Let $f \colon X \to Y$ be a projective morphism between complex manifolds. If
$(\Mmod, F_{\bullet} \Mmod)$ underlies a mixed Hodge module on $X$, one has
\[
	\derR \fl \gr_p^F \DR(\Mmod) \simeq \gr_p^F \DR(\fp \Mmod)
		\simeq \bigoplus_{i \in \ZZ} \gr_p^F \DR \bigl( \shH^i \fp \Mmod \bigr) \decal{-i}
\]
for every $p \in \ZZ$.
\end{theorem}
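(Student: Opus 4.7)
The plan is to prove the two claimed isomorphisms separately, using strictness of the filtered direct image as the key input.

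The first isomorphism $\derR \fl \gr_p^F \DR(\Mmod) \simeq \gr_p^F \DR(\fp \Mmod)$ follows once we know that $\fp(\Mmod, F_{\bullet} \Mmod)$ is strict in $\DbcohF(\Dmod_Y)$. By the construction of the filtered direct image (via the relative filtered de Rham complex, as in \parref{par:direct-image}), the filtration on $\fp \Mmod$ is defined so that $F_p \DR(\fp \Mmod)$ is canonically identified with $\derR \fl F_p \DR(\Mmod)$ as a subcomplex; strictness is precisely the statement that passing to $\gr_p^F$ commutes with this direct image, which yields the first isomorphism in $\Dbcoh(\OY)$. Strictness holds for $M \in \MHMp{X}$ by \theoremref{thm:direct-image}, and extends to mixed Hodge modules by induction on the length of the weight filtration $W_{\bullet} M$, using the $E_2$-degeneration of the weight spectral sequence recalled in \parref{par:MHW}.

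For the second isomorphism, the plan is to decompose $\fp(\Mmod, F_{\bullet} \Mmod)$ in $\DbcohF(\Dmod_Y)$ as $\bigoplus_i \shH^i \fp(\Mmod, F_{\bullet} \Mmod) \decal{-i}$ and then apply the exact functor $\gr_p^F \DR$. For polarizable pure Hodge modules, the decomposition is exactly \corollaryref{cor:decomposition-MF}. In the general mixed case, one reduces to the pure case by induction on the weight filtration: each $\gr_{\ell}^W M$ is a polarizable pure Hodge module whose direct image decomposes by the pure case, and one must then check that the extensions assembling these pieces into $\fp \Mmod$ split after $\gr_p^F \DR$.

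The main obstacle is this last splitting in the mixed case. Unlike in the pure case, the complex $\fp(\Mmod, F_{\bullet} \Mmod)$ need not decompose in $\DbcohF(\Dmod_Y)$, since $\Ext^1$ between mixed Hodge modules of distinct weights is in general nonzero. One must therefore show that the obstruction classes to splitting die after passing to $\gr_p^F \DR$ in $\Dbcoh(\OY)$---equivalently, that the weight spectral sequence for the filtered direct image, when viewed through $\gr_p^F \DR$, not only degenerates at $E_2$ but produces a split filtration on each cohomology sheaf $\shH^i \gr_p^F \DR(\fp \Mmod)$. This delicate interplay between strictness and the weight formalism is where the core content of the theorem sits.
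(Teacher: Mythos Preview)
The paper does not actually prove this theorem; it states the result and cites \cite[\S2.3.7]{Saito-HM}. Your approach to the first isomorphism is the correct one and matches Saito's: strictness of $\fp(\Mmod, F_{\bullet}\Mmod)$ is exactly what is needed for $\gr_p^F$ to commute with $\derR\fl$ on de Rham complexes, and your extension of strictness from the pure case to the mixed case by d\'evissage along $W_{\bullet}$ (using the $E_2$-degeneration of the weight spectral sequence from \parref{par:MHW}) is valid.

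For the second isomorphism you have located a genuine difficulty and then left it open, which is a gap. In the pure case the decomposition in $\DbcohF(\Dmod_Y)$ is precisely \corollaryref{cor:decomposition-MF}, obtained from hard Lefschetz via Deligne's splitting criterion, and applying the exact functor $\gr_p^F\DR$ finishes the argument at once. In the mixed case hard Lefschetz fails, so $\fp(\Mmod, F_{\bullet}\Mmod)$ need not split in $\DbcohF(\Dmod_Y)$; your suggestion that the obstruction classes ``die after passing to $\gr_p^F\DR$'' is not backed by any mechanism, since that functor lands in $\Dbcoh(\OY)$, which has no reason to kill the relevant extensions. Note also that \cite{Saito-HM} treats only \emph{pure} polarizable Hodge modules, so \S2.3.7 there obtains the second isomorphism directly from the decomposition theorem. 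The survey's formulation for mixed Hodge modules either presumes a reduction to the pure case that is not spelled out here, or is a mild overstatement of what \cite[\S2.3.7]{Saito-HM} literally contains. Either way, your proposal does not resolve the mixed case, and the paper itself offers no argument beyond the citation.
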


\subsection{Duality functor}
\label{par:duality}

We also have to say a few words about the definition of the duality functor for mixed
Hodge modules. The goal is to have an exact functor
\[
	\DD \colon \DbcohF(\Dmod_X) \to \DbcohF(\Dmod_X)^{\opp}
\]
that is compatible with the Verdier dual $\DD K$ for constructible complexes. It is
again easiest to define such a functor in terms of graded right $\Rmod$-modules.  The
point is that the tensor product $\omX \tensor_{\OX} \Rmod_X$ has two commuting
structures of right $\Rmod_X$-module; when we apply
\[
	\derR \shHom_{\Rmod_X} \Bigl( \argbl, \omX \tensor_{\OX} \Rmod_X \Bigr)
		\decal{\dim X}
\]
to a complex of graded right $\Rmod_X$-modules, we therefore obtain another complex
of graded right $\Rmod_X$-modules. Note that if we tensor by $\Rmod_X/(z-1) \Rmod_X$,
this operation specializes to the usual duality functor
\[
	\derR \shHom_{\Dmod_X} \Bigl( \argbl, \omX \tensor_{\OX} \Dmod_X \Bigr)
		\decal{\dim X}
\]
for $\Dmod$-modules, and is therefore nicely compatible with Verdier duality.

\begin{example}
In the case of a single filtered regular holonomic $\Dmod$-module $(\Mmod, F_{\bullet}
\Mmod)$, the complex of graded $\Rmod_X$-modules
\begin{equation} \label{eq:dual-MF}
	\derR \shHom_{\Rmod_X} \Bigl( R_F \Mmod, \omX \tensor_{\OX} \Rmod_X \Bigr)
		\decal{\dim X}
\end{equation}
can have cohomology in negative degrees and therefore fail to be strict. In general,
all one can say is that the cohomology in negative degrees must be $z$-torsion,
because the complex reduces to the holonomic dual of $\Mmod$ after tensoring by
$\Rmod_X/(z-1)\Rmod_X$.
\end{example}

Fortunately, the problem above does not arise for Hodge modules; this is the content
of the following theorem by Saito \cite[Lemme~5.1.13]{Saito-HM}.

\begin{theorem} \label{thm:duality}
If $(\Mmod, F_{\bullet} \Mmod)$ underlies a Hodge module $M \in \HM{X}{w}$, then the
dual complex is strict and again underlies a Hodge module $\DD M \in \HM{X}{-w}$.
\end{theorem}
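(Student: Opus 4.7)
The plan is to prove the theorem by induction on the dimension of the support, following the template of all recursive arguments in Saito's theory. First I would invoke \eqref{eq:HM-def-2} to reduce to the case where $M \in \HMZ{Z}{X}{w}$ has strict support on an irreducible subvariety $Z \subseteq X$; since duality is local, I may also shrink $X$ freely. The base case $\dim Z = 0$ is immediate: by \eqref{eq:HM-def-3}, $M = \il H$ for a $\QQ$-Hodge structure $H$ of weight $w$, and the dual complex reduces (via Kashiwara's equivalence, \propositionref{prop:Kashiwara}, applied to the filtered situation as in \exerciseref{ex:Kashiwara-MF}) to $\il$ applied to the dual Hodge structure $H^{\vee}$, which is of weight $-w$; strictness is manifest because $H$ is a finite-dimensional filtered vector space.

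For the inductive step, I would choose locally a non-constant holomorphic function $f$ whose zero locus does not contain $Z$, so that the nearby and vanishing cycles $\psi_f M$ and $\phi_{f,1} M$ are supported in strictly smaller dimension and, by condition \eqref{eq:HM-def-4}, each $\gr_\ell^{W(N)} \psi_f M$ (and the analogue for $\phi_{f,1} M$) is a Hodge module to which the inductive hypothesis applies. The heart of the proof is then to establish the compatibility of the filtered duality functor with nearby and vanishing cycles, namely isomorphisms of the form
\[
  \DD \psi_f M \simeq \psi_f \DD M (1)
  \quad\text{and}\quad
  \DD \phi_{f,1} M \simeq \phi_{f,1} \DD M,
\]
where the Tate twist encodes the interaction of $\DD$ with the shift built into the definition of $\ppsi_f$ and $\pphi_f$. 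To prove these, I would compute the $V\!$-filtration on the dual Rees module: the condition that $(\Mmod, F_\bullet \Mmod)$ is Cohen-Macaulay (which Saito establishes for Hodge modules) ensures that the a priori complex $\derR \shHom_{\Rmod_X}(R_F \Mmod, \omX \tensor \Rmod_X) \decal{\dim X}$ is concentrated in degree $0$, and the behaviour of $\gr_\alpha^V$ under duality is dictated by pairing $V_\alpha$ against $V_{<-\alpha}$, with $t$ and $\partial_t$ swapping roles. The surjectivity conditions in \definitionref{def:strict-spec} are exactly what is needed for these dualities to be strict.

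The main obstacle, and the place where real work is required, is precisely this strictness statement: one must show simultaneously that the Rees-module dual has no $z$-torsion and that the induced $V\!$-filtration on it satisfies the conditions of \definitionref{def:strict-spec} with the correct Hodge filtration. I would handle this by arguing that failure of strictness, being a $z$-torsion phenomenon, would have to show up on some $\gr_\alpha^V$; but by the compatibility formulas above, $\gr_\alpha^V$ of the dual is computed by applying duality to $\gr_{-\alpha-1}^V$ of the original, and the latter underlies a Hodge module of smaller support dimension to which the inductive hypothesis applies. Thus strictness on the nearby and vanishing cycle side propagates back to strictness of $\DD M$ itself, using that $\Mmodf$ is generated over $\Dmod$ by $V_{<0} \Mmodf$ (cf.\ the discussion around \eqref{eq:F-formula}).

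Finally, the weight assertion $\DD M \in \HM{X}{-w}$ follows from tracking the Tate twists: since $\gr_\ell^{W(N)} \psi_f M \in \HM{X}{w-1+\ell}$, the isomorphism with $\DD \psi_f M(-1)$ and the inductive hypothesis force $\gr_\ell^{W(N)} \psi_f \DD M \in \HM{X}{-w-1+\ell}$, which is exactly condition \eqref{eq:HM-def-4} for $\DD M$ to be a Hodge module of weight $-w$. The decomposition-by-strict-support condition \eqref{eq:HM-def-2} for $\DD M$ is inherited from $M$ via \theoremref{thm:SS-criterion}, using that $\can$ and $\var$ are interchanged under duality. The quasi-unipotence and regularity condition \eqref{eq:HM-def-1} for $\DD M$ is exactly the strictness-of-$V\!$-filtration statement established in the previous paragraph.
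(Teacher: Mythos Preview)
Your approach---induction on $\dim \Supp M$, with the key input being compatibility of the duality functor with nearby and vanishing cycles---is exactly what the paper sketches (in a single sentence). Your expansion of the outline is sound: the base case via \eqref{eq:HM-def-3}, the inductive step via the isomorphisms $\DD \psi_f M \simeq \psi_f(\DD M)(1)$ and $\DD \phi_{f,1} M \simeq \phi_{f,1}(\DD M)$, and the verification of \eqref{eq:HM-def-1}--\eqref{eq:HM-def-4} for $\DD M$ all match Saito's argument in \cite[Lemme~5.1.13]{Saito-HM}.

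One expository point to clean up: when you write ``the condition that $(\Mmod, F_\bullet \Mmod)$ is Cohen--Macaulay (which Saito establishes for Hodge modules) ensures that the a priori complex \dots\ is concentrated in degree $0$,'' you are invoking the very statement you are proving---as the paper notes immediately after the theorem, Cohen--Macaulayness of $\gr_\bullet^F \Mmod$ is \emph{equivalent} to strictness of the dual complex. You clearly understand this, since your next paragraph identifies strictness as ``the main obstacle'' and gives the correct inductive mechanism (torsion must appear on some $\gr_\alpha^V$, which by the compatibility formulas is the dual of a lower-dimensional Hodge module). Just rephrase so that the compatibility of $\DD$ with the $V$-filtration is established first for the underlying $\Dmod$-module (where no strictness is needed), and then bootstrap the filtered statement from the inductive hypothesis on the graded pieces.
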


It is proved by induction on the dimension of $\Supp M$; the key point is the
compatibility of the duality functor with nearby and vanishing cycles. The result
extends without much trouble to the case of mixed Hodge modules: given a mixed Hodge
module $(M, W_{\bullet} M) \in \MHM{X}$, the pair
\[
	\bigl( \DD M, \DD W_{-\bullet} M \bigr)
\]
is again a mixed Hodge module.

In fact, the strictness of the complex in \eqref{eq:dual-MF} is equivalent to
$\gr_{\bullet}^F \Mmod$ being \define{Cohen-Macaulay} as an $\Amod_X$-module, where
$\Amod_X = \gr_{\bullet}^F \! \Dmod_X$; therefore \theoremref{thm:duality} is saying
that $\gr_{\bullet}^F \Mmod$ is a Cohen-Macaulay module whenever $(\Mmod, F_{\bullet}
\Mmod)$ underlies a mixed Hodge module.  This has the following useful consequence:
if $(\Mmod', F_{\bullet} \Mmod')$ denotes the filtered $\Dmod$-module underlying $M'
= \DD M$, then
\[
	\gr_{\bullet}^F \! \Mmod'
		\simeq \derR \shHom_{\Amod_X} \Bigl( \gr_{\bullet}^F \! \Mmod,
			\omX \tensor_{\OX} \Amod_X \Bigr) \decal{n},
\]
where $n = \dim X$. It also implies that the coherent sheaf $\shC(X, M)$ on the
cotangent bundle $T^{\ast} X$ is Cohen-Macaulay.  This fact can be used to get
information about the coherent sheaves $R^i \shHom_{\OX}(\gr_p^F \! \Mmod, \OX)$ from
the geometry of the characteristic variety $\Ch(\Mmod)$ of the $\Dmod$-module
\cite{mhmduality}. For example, suppose that the fiber of the projection $\Ch(\Mmod)
\to X$ over a point $x \in X$ has dimension $\leq d$; then
\[
	R^i \shHom_{\OX}(\gr_p^F \! \Mmod, \OX) = 0 \quad \text{for every $i \geq d + 1$.}
\]

\subsection{Inverse image functors}
\label{par:inverse-image}

The inverse image functors for mixed Hodge modules lift the two operations $f^{-1}
K$ and $\fus K$ for perverse sheaves. We shall only discuss the case
of a morphism $f \colon Y \to X$ between two complex manifolds; some additional work
is needed to deal with morphisms between analytic spaces. Using the factorization
\[
\begin{tikzcd}
	Y \rar[hook] \arrow[bend right=40]{rr}{f} & Y \times X \rar{p_2} & X
\end{tikzcd}
\]
through the graph of $f$, it is enough to define inverse images for closed embeddings
and for smooth projections.

Let us first consider the case of a smooth morphism $f \colon Y \to X$. For
variations of Hodge structure, the inverse image is obtained by pulling back the
vector bundle and the connection; to get a sensible definition for Hodge
modules, we therefore have to tensor by the relative canonical bundle $\omYX = \omY
\tensor \fu \omX^{-1}$. There also has to be a shift by the relative dimension $r =
\dim Y - \dim X$ of the morphism, because of Saito's convention that a variation of
Hodge structure of weight $w$ defines a Hodge module of weight $w + \dim X$.

With this in mind, let $M = (\Mmod, F_{\bullet} \Mmod, K)$ be
a filtered regular holonomic $\Dmod$-module with $\QQ$-structure on $X$. We first
define an auxiliary object
\[
	\Mt = \bigl( \Mmodt, F_{\bullet} \Mmodt, \Kt \bigr),
\]
where $\Kt = f^{-1} K(-r)$ is again a perverse sheaf on $Y$, and where
\[
	\Mmodt = \omYX \tensor_{\OY} \fu \Mmod \quad \text{and} \quad
		F_p \Mmodt = \omYX \tensor_{\OY} \fu F_{p+r} \Mmod
\]
is again a filtered $\Dmod$-module on $Y$, through the natural morphism
$\Dmod_Y \to \fu \Dmod_X$. It requires a small calculation to show that
$\DR(\Mmodt) \simeq \CC \tensor_{\QQ} \Kt$; this is where the factor $\omYX$ comes
in. Now the crucial result is the following.

\begin{theorem} \label{thm:pullback}
Suppose that $M \in \HMp{X}{w}$. Then $\Mt \in \HMp{Y}{w+r}$.
\end{theorem}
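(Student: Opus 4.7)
The plan is to reduce to an external-product situation and then verify the recursive definition of a Hodge module, using the structure theorem (\theoremref{thm:structure}) to produce a candidate that we identify with $\Mt$. Since membership in $\HMp{Y}{w+r}$ is local on $Y$ and the construction of $\Mt$ commutes with restriction to open sets, we may work locally; factoring $f$ through its graph $Y \into Y \times X$ and choosing local product coordinates reduces us to $Y = X \times T$ with $T = \Delta^r$ and $f$ the first projection, and induction on $r$ reduces further to $r = 1$. In this case $\Mt$ identifies with the external product $M \boxtimes \QQ_\Delta^H \decal{1}$, of expected weight $w + 1$.

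I would proceed by induction on $d = \dim \Supp \Mmod$. The base case $d = 0$ is immediate from the structure theorem applied to $\{x\} \times \Delta \into Y$. For $d \geq 1$, decomposition by strict support reduces us to $M$ with strict support an irreducible $Z \subseteq X$, so that $M$ extends a polarized variation $\shV$ of weight $w - \dim Z$ on a Zariski-open $U_0$ of the smooth locus of $Z$. The pullback $\pu \shV$ is a polarized variation of the same weight on $U_0 \times \Delta$; since $w - \dim Z = (w+1) - \dim(Z \times \Delta)$, \theoremref{thm:structure} on $Y$ produces a candidate $M' \in \HMZp{Z \times \Delta}{Y}{w+1}$ extending it, and it remains to show $\Mt = M'$.

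Both $\Mt$ and $M'$ have strict support $Z \times \Delta$ and agree on $U_0 \times \Delta$. Formula~\eqref{eq:F-formula} then propagates the identification to all of $Y$ once we verify that $(\Mmodt, F_\bullet \Mmodt)$ is quasi-unipotent and regular along every locally defined holomorphic function $g$ on $Y$, and that each $\gr_\ell^{W(N)} \psi_g \Mt$ is a polarizable Hodge module of weight $w + \ell$ with support of dimension at most $d$. When $g = h \circ p$ is pulled back from $X$, the rational $V\!$-filtration of $\Mt$ along $g$ is the external product of the $V\!$-filtration of $M$ along $h$ with $\shO_\Delta$, so the conditions of \definitionref{def:strict-spec} descend directly from those for $M$; moreover $\psi_g \Mt \simeq \psi_h M \boxtimes \QQ_\Delta^H \decal{1}$, and the recursive condition \eqref{eq:HM-def-4} follows by applying the inductive hypothesis to $\gr_\ell^{W(N)} \psi_h M \in \HMs{d-1}{X}{w-1+\ell}$.

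The main obstacle will be the case of a general $g$ depending nontrivially on the $\Delta$-coordinate, where $\psi_g \Mt$ is not an external product. My plan is to reduce to the previous case by a Hironaka resolution $\pi \colon \widetilde V \to V$ of the pair $(V, g^{-1}(0))$ on some open $V \subseteq Y$, after which $g \circ \pi$ becomes locally monomial in coordinates one of which is still pulled back from $X$. The direct image theorem (\theoremref{thm:direct-image}) for the projective $\pi$ then transports the analysis upstairs, and the combinatorial description of the $V\!$-filtration for monomial functions on products of disks developed in \parref{par:structure-proof} identifies $\psi_g \Mt$ and its monodromy-graded pieces in terms of objects already recognized as polarizable Hodge modules by the previous paragraph. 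The candidate polarization on $\Mt$ is the external product of the given polarization on $M$ with the canonical one on $\QQ_\Delta^H \decal{1}$, and its Hodge-Lefschetz positivity on primitive parts is inherited through the same reduction.
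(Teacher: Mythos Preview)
Your overall skeleton matches the paper's: reduce by strict support, use the structure theorem to produce a candidate $M' \in \HMZp{f^{-1}(Z)}{Y}{w+r}$ from the pulled-back variation, and then identify $\Mt \simeq M'$. The paper is explicit that this is the only viable route, remarking that ``one cannot prove directly that $\Mt$ satisfies the conditions for being a Hodge module.''

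The gap is in how you carry out the identification. You assert that to propagate $\Mt = M'$ via \eqref{eq:F-formula} you must verify that $(\Mmodt, F_{\bullet}\Mmodt)$ is quasi-unipotent and regular along \emph{every} locally defined $g$, and that each $\gr_{\ell}^{W(N)} \psi_g \Mt$ is a polarizable Hodge module. But that is precisely the full recursive definition of $\HMp{Y}{w+1}$ --- exactly the direct verification the paper warns is hopeless --- and it renders the candidate $M'$ irrelevant. In fact, once you have $M'$ in hand, the identification needs only a \emph{single} function: choose $h$ on $X$ with $Z \setminus h^{-1}(0) \subseteq U_0$ and set $g = h \circ p$. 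You yourself observe that for such pulled-back $g$ the $V$-filtration on $\Mmodt$ is the external product, so $\Mt$ is quasi-unipotent and regular along $g$; the same holds for $M'$ because $M'$ is a Hodge module. Both filtered $\Dmod$-modules then satisfy \eqref{eq:F-formula} for this one $g$, hence are determined by their common restriction to $U_0 \times \Delta$, and the underlying $\Dmod$-modules and perverse sheaves already agree as intermediate extensions. That finishes the proof: since $M'$ is a Hodge module, so is $\Mt$, and every condition for arbitrary $g$ follows \emph{a posteriori}.

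Your final paragraph, reducing general $g$ to monomials via resolution and the direct image theorem, is therefore unnecessary, and as written it is also circular: \theoremref{thm:direct-image} applies only to objects already known to be polarizable Hodge modules, which is what you are trying to establish for $\Mt$.
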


Unfortunately, one cannot prove directly that $\Mt$ satisfies the conditions for
being a Hodge module; instead, the proof has to go through the equivalence of categories
in \theoremref{thm:structure}. Since $M$ admits a decomposition by strict support, we
may assume that $M \in \HMZp{Z}{X}{w}$, and therefore comes from a generically
defined polarizable variation of Hodge structure on $Z$ of weight $w - \dim Z$; its
pullback is a generically defined polarizable variation of Hodge structure on
$f^{-1}(Z)$ of the same weight, and therefore extends uniquely to an object of
\[
	\HMZp{f^{-1}(Z)}{Y}{w+r}. 
\]
One then checks that this extension is isomorphic to $\Mt$.

With this surprisingly deep result in hand, it is straightforward to construct
the two inverse image functors in general. Let $(M, W_{\bullet}) \in \MHW{X}$ be a
weakly mixed Hodge module on $X$. For a smooth morphism $f \colon Y \to X$, we define
\begin{align*}
	\shH^{-r} \fu(M, W_{\bullet} M) &= \Bigl( \Mt, W_{\bullet+r} \Mt \Bigr) \\
	\shH^r \fus(M, W_{\bullet} M) &= \Bigl( \Mt(r), W_{\bullet-r} \Mt(r) \Bigr)
\end{align*}
as objects of $\MHW{Y}$; the need for shifting the weight filtration is explained
by \theoremref{thm:pullback}. One can show that both functors take
(graded-polarizable) mixed Hodge modules to (graded-polarizable) mixed Hodge modules.

Next, we turn our attention to the case of a closed embedding $i \colon Y \into X$.
Here the idea is to construct cohomological functors
\begin{equation} \label{eq:iu-ius}
	\shH^j \iu \colon \MHM{X} \to \MHM{Y}
		\quad \text{and} \quad
	\shH^j \ius \colon \MHM{X} \to \MHM{Y}
\end{equation}
in terms of nearby and vanishing cycles. This procedure has the advantage of
directly producing mixed Hodge modules; note that it leads in general to a
different filtration than the one used by Laumon for arbitrary filtered $\Dmod$-modules
\cite{Laumon}. 

To make things simpler, let me explain the construction when $\dim Y = \dim X -
1$. Locally, the submanifold $Y$ is then defined by a single holomorphic function
$t$, so let us first treat the case where $Y = t^{-1}(0)$. Fix a mixed Hodge module $M \in
\MHM{X}$ and set $K = \rat M$. By construction of the vanishing cycle functor (see
\parref{par:review-cycles}), we have a distinguished triangle
\[
	i^{-1} K \to \psi_{t,1} K \to \phi_{t,1} K \to i^{-1} K \decal{1}
\]
in the derived category of constructible complexes; it is therefore reasonable to
define $\shH^j \iu M \in \MHM{Y}$, for $j \in \{-1, 0\}$, as the cohomology modules
of the complex of mixed Hodge modules
\[
\begin{tikzcd}[column sep=scriptsize]
	\Bigl\lbrack \psi_{t,1} M \rar{\can} & \phi_{t,1} M \Bigr\rbrack \decal{1}.
\end{tikzcd}
\]
To describe this operation on the level of filtered $\Dmod$-modules, let $V_{\bullet}
\Mmod$ denote the $V\!$-filtration relative to the divisor $t = 0$; then the
corresponding complex of filtered $\Dmod$-modules is
\begin{equation} \label{eq:iu-MF}
\begin{tikzcd}[column sep=scriptsize]
	\Bigl\lbrack 
		\bigl( \gr_{-1}^V \Mmod, F_{\bullet-1} \gr_{-1}^V \Mmod \bigr) \rar{\partial_t} &
		\bigl( \gr_0^V \Mmod, F_{\bullet} \gr_0^V \Mmod \bigr) \Bigr\rbrack \decal{1},
\end{tikzcd}
\end{equation}
where the filtration is the one induced by $F_{\bullet} \Mmod$. 

To deal with the general case, we observe that the $V\!$-filtration is independent of
the choice of local equation for $Y$, and that both $\gr_{-1}^V \Mmod$ and $\gr_0^V
\Mmod$ are well-defined sheaves of $\OY$-modules. The same is true for the action of
$t \partial_t$; the only thing that actually depends on $t$ is the $\Dmod_Y$-module
structure. In fact, both sheaves carry an action by $\gr_0^V
\Dmod_X$, but this $\OY$-algebra is only locally isomorphic to $\Dmod_Y \lbrack t
\partial_t \rbrack$. On the other hand, one can show that the quotient of $\gr_0^V
\Dmod_X$ by $t \partial_t$ is canonically isomorphic to $\Dmod_Y$; this gives both
\[
	\coker \Bigl( \partial_t \colon \gr_{-1}^V \Mmod \to \gr_0^V \Mmod \Bigr)
		\quad \text{and} \quad
	\ker \Bigl( \partial_t \colon \gr_{-1}^V \Mmod
		\to \gr_0^V \Mmod \Bigr) \tensor_{\OY} \mathscr{N}_{Y \mid X}
\]
a canonical $\Dmod_Y$-module structure that does not depend on the choice of $t$.
Thus we obtain two filtered regular holonomic $\Dmod$-modules with $\QQ$-structure
and weight filtration; because the conditions in the definition are local, both are
mixed Hodge modules on $Y$. 

The same procedure can be used to define $\shH^j \ius M \in \MHM{Y}$, for $j \in \{0,
1\}$. When $Y = t^{-1}(0)$, one has a similar distinguished triangle for $\ius K$,
which suggests to look at the complex of mixed Hodge modules
\[
\begin{tikzcd}[column sep=scriptsize]
	\Bigl\lbrack \phi_{t,1} M \rar{\var} & \psi_{t,1} M(-1) \Bigr\rbrack.
\end{tikzcd}
\]
The corresponding complex of filtered $\Dmod$-modules is
\begin{equation} \label{eq:ius-MF}
\begin{tikzcd}[column sep=scriptsize]
	\Bigl\lbrack 
		\bigl( \gr_0^V \Mmod, F_{\bullet} \gr_0^V \Mmod \bigr) \rar{t} &
		\bigl( \gr_{-1}^V \Mmod, F_{\bullet} \gr_{-1}^V \Mmod \bigr) \Bigr\rbrack.
\end{tikzcd}
\end{equation}
As before, one proves that the cohomology of this complex leads to two
well-defined mixed Hodge modules on $Y$. 

\begin{exercise}
Let $\ius \Mmod$ denote the complex of $\Dmod$-modules in \eqref{eq:ius-MF}. Use the
fact that $(\Mmod, F_{\bullet} \Mmod)$ is quasi-unipotent and regular along $t = 0$ to
construct a morphism
\[
	F_p(\ius \Mmod) \to \derL \iu(F_p \Mmod) \decal{-1}
\]
in the derived category $\Dbcoh(\OX)$. Show that this morphism is an isomorphism when
$\Mmod$ comes from a vector bundle with integrable connection.
\end{exercise}

For an arbitrary morphism $f \colon Y \to X$, we use
the factorization $f = p_2 \circ i$ given by the graph of $f$, and define
\begin{align*}
	\shH^j \fu(M, W_{\bullet} M) &= \shH^{j + \dim Y} \iu 
		\Bigl( \shH^{-\dim Y} \pu_2(M, W_{\bullet} M) \Bigr) \\
	\shH^j \fus(M, W_{\bullet} M) &= \shH^{j - \dim Y} \ius 
		\Bigl( \shH^{\dim Y} \pus_2(M, W_{\bullet} M) \Bigr).
\end{align*}
One can check that the resulting functors take $\MHMp{X}$ into $\MHMp{Y}$, and that
they are (up to canonical isomorphism) compatible with composition.

%++++++++++++++++++++++++++++++++++

\bibliographystyle{amsalphax}
%\bibliography{bibliography}

\begin{thebibliography}{BBD{\etalchar{+}}12}

\bibitem[BBD82]{Beilinson+Bernstein+Deligne}
Alexander Be{\u\i}linson, Joseph Bernstein, and Pierre Deligne, \emph{Faisceaux
  pervers}, Analysis and topology on singular spaces, {I} ({L}uminy, 1981),
  Ast\'erisque, vol. 100, Soc. Math. France, Paris, 1982, pp.~5--171.
  \MR{751966 (86g:32015)}

\bibitem[BBD{\etalchar{+}}12]{Brav+Bussi+Dupont+Joyce+Szendroi:VanishingCycles}
Chris Brav, Vittoria Bussi, Delphine Dupont, Dominic Joyce, and Bal{\'a}zs
  Szendr{\H{o}}i, \emph{Symmetries and stabilization for sheaves of vanishing
  cycles}, 2012. \arXiv{1211.3259}

\bibitem[BBS13]{Behrend+Bryan+Szendroi:MotivicDT}
Kai Behrend, Jim Bryan, and Bal{\'a}zs Szendr{\H{o}}i, \emph{Motivic degree
  zero {D}onaldson-{T}homas invariants}, Invent. Math. \textbf{192} (2013),
  no.~1, 111--160. \MR{3032328}

\bibitem[Be{\u\i}87]{Beilinson:HowToGlue}
Alexander Be{\u\i}linson, \emph{How to glue perverse sheaves}, {$K$}-theory,
  arithmetic and geometry ({M}oscow, 1984--1986), Lecture Notes in Math., vol.
  1289, Springer, Berlin, 1987, pp.~42--51. \MR{923134 (89b:14028)}

\bibitem[BFNP09]{Brosnan+Fang+Nie+Pearlstein:Singularities}
Patrick Brosnan, Hao Fang, Zhaohu Nie, and Gregory Pearlstein,
  \emph{Singularities of admissible normal functions}, Invent. Math.
  \textbf{177} (2009), no.~3, 599--629, With an appendix by Najmuddin
  Fakhruddin. \MR{2534102 (2010h:14013)}

\bibitem[BGS96]{Beilinson+Ginzburg+Soergel:Koszul}
Alexander Be{\u\i}linson, Victor Ginzburg, and Wolfgang Soergel, \emph{Koszul
  duality patterns in representation theory}, J. Amer. Math. Soc. \textbf{9}
  (1996), no.~2, 473--527. \MR{1322847 (96k:17010)}

\bibitem[BMS06]{Budur+Mustata+Saito:BernsteinSato}
Nero Budur, Mircea Musta{\c{t}}{\v{a}}, and Morihiko Saito,
  \emph{Bernstein-{S}ato polynomials of arbitrary varieties}, Compos. Math.
  \textbf{142} (2006), no.~3, 779--797. \MR{2231202 (2007c:32036)}

\bibitem[Bry86]{Brylinski:TransformationsCanoniques}
Jean-Luc Brylinski, \emph{Transformations canoniques, dualit\'e projective,
  th\'eorie de {L}efschetz, transformations de {F}ourier et sommes
  trigonom\'etriques}, Ast\'erisque (1986), no.~140-141, 3--134, 251,
  G{\'e}om{\'e}trie et analyse microlocales. \MR{864073 (88j:32013)}

\bibitem[BW04]{Burgos+Wildeshaus:Shimura}
Jos{\'e} Burgos and J{\"o}rg Wildeshaus, \emph{Hodge modules on {S}himura
  varieties and their higher direct images in the {B}aily-{B}orel
  compactification}, Ann. Sci. \'Ecole Norm. Sup. (4) \textbf{37} (2004),
  no.~3, 363--413. \MR{2060481 (2005e:14039)}

\bibitem[Cau13]{Cautis:OpenEmbedding}
Sabin Cautis, \emph{The associated graded functor of an open embedding}, 2013.

\bibitem[CG80]{Carlson+Griffiths:Infinitesimal}
James Carlson and Phillip Griffiths, \emph{Infinitesimal variations of {H}odge
  structure and the global {T}orelli problem}, Journ\'ees de {G}\'eometrie
  {A}lg\'ebrique d'{A}ngers, {J}uillet 1979, Sijthoff \& Noordhoff, Alphen aan
  den Rijn---Germantown, Md., 1980, pp.~51--76. \MR{605336 (82h:14006)}

\bibitem[CKS86]{Cattani+Kaplan+Schmid:Degeneration}
Eduardo Cattani, Aroldo Kaplan, and Wilfried Schmid, \emph{Degeneration of
  {H}odge structures}, Ann. of Math. (2) \textbf{123} (1986), no.~3, 457--535.
  \MR{840721 (88a:32029)}

\bibitem[CKS87]{Cattani+Kaplan+Schmid:L2}
\bysame, \emph{{$L^2$} and intersection cohomologies for a polarizable
  variation of {H}odge structure}, Invent. Math. \textbf{87} (1987), no.~2,
  217--252. \MR{870728 (88h:32019)}

\bibitem[dCM05]{deCataldo+Migliorini:HodgeTheoryMaps}
Mark de~Cataldo and Luca Migliorini, \emph{The {H}odge theory of algebraic
  maps}, Ann. Sci. \'Ecole Norm. Sup. (4) \textbf{38} (2005), no.~5, 693--750.
  \MR{2195257 (2007a:14016)}

\bibitem[Del68]{Deligne:Lefschetz}
Pierre Deligne, \emph{Th\'eor\`eme de {L}efschetz et crit\`eres de
  d\'eg\'en\'erescence de suites spectrales}, Inst. Hautes \'Etudes Sci. Publ.
  Math. (1968), no.~35, 259--278. \MR{0244265 (39 \#5582)}

\bibitem[Del71]{Deligne:HodgeI}
\bysame, \emph{Th\'eorie de {H}odge. {I}}, Actes du {C}ongr\`es {I}nternational
  des {M}ath\'ematiciens ({N}ice, 1970), {T}ome 1, Gauthier-Villars, Paris,
  1971, pp.~425--430. \MR{0441965 (56 \#356)}

\bibitem[Del87]{Deligne:Finitude}
\bysame, \emph{Un th\'eor\`eme de finitude pour la monodromie}, Discrete groups
  in geometry and analysis ({N}ew {H}aven, {C}onn., 1984), Progr. Math.,
  vol.~67, Birkh\"auser Boston, Boston, MA, 1987, pp.~1--19. \MR{900821
  (88h:14013)}

\bibitem[DMS11]{Dimca+Maisonobe+Saito:Spectrum}
Alexandru Dimca, Philippe Maisonobe, and Morihiko Saito, \emph{Spectrum and
  multiplier ideals of arbitrary subvarieties}, Ann. Inst. Fourier (Grenoble)
  \textbf{61} (2011), no.~4, 1633--1653 (2012). \MR{2951747}

\bibitem[DS13]{Dettweiler+Sabbah:MiddleConvolution}
Michael Dettweiler and Claude Sabbah, \emph{Hodge theory of the middle
  convolution}, Publ. Res. Inst. Math. Sci. \textbf{49} (2013), no.~4,
  761--800.

\bibitem[GGM85]{Galligo+Granger+Maisonobe:CroisementNormal}
Andr\'e Galligo, Michel Granger, and Philippe Maisonobe,
  \emph{{$\mathcal{D}$}-modules et faisceaux pervers dont le support singulier
  est un croisement normal}, Ann. Inst. Fourier (Grenoble) \textbf{35} (1985),
  no.~1, 1--48. \MR{781776 (88b:32027)}

\bibitem[Kas83]{Kashiwara:VanishingCycleSheaves}
Masaki Kashiwara, \emph{Vanishing cycle sheaves and holonomic systems of
  differential equations}, Algebraic geometry ({T}okyo/{K}yoto, 1982), Lecture
  Notes in Math., vol. 1016, Springer, Berlin, 1983, pp.~134--142. \MR{726425
  (85e:58137)}

\bibitem[Kas86]{Kashiwara:Study}
\bysame, \emph{A study of variation of mixed {H}odge structure}, Publ. Res.
  Inst. Math. Sci. \textbf{22} (1986), no.~5, 991--1024. \MR{866665
  (89i:32050)}

\bibitem[Kas95]{Kashiwara:Thesis}
\bysame, \emph{Algebraic study of systems of partial differential equations},
  M\'em. Soc. Math. France (N.S.) (1995), no.~63, xiv+72. \MR{1384226
  (97f:32012)}

\bibitem[KK87]{Kashiwara+Kawai:Poincare}
Masaki Kashiwara and Takahiro Kawai, \emph{The {P}oincar\'e lemma for
  variations of polarized {H}odge structure}, Publ. Res. Inst. Math. Sci.
  \textbf{23} (1987), no.~2, 345--407. \MR{890924 (89g:32035)}

\bibitem[Kol86a]{Kollar:DualizingI}
J{\'a}nos Koll{\'a}r, \emph{Higher direct images of dualizing sheaves. {I}},
  Ann. of Math. (2) \textbf{123} (1986), no.~1, 11--42. \MR{825838 (87c:14038)}

\bibitem[Kol86b]{Kollar:DualizingII}
\bysame, \emph{Higher direct images of dualizing sheaves. {II}}, Ann. of Math.
  (2) \textbf{124} (1986), no.~1, 171--202. \MR{847955 (87k:14014)}

\bibitem[Lau83]{Laumon:DF}
G{\'e}rard Laumon, \emph{Sur la cat\'egorie d\'eriv\'ee des
  {$\mathcal{D}$}-modules filtr\'es}, Algebraic geometry ({T}okyo/{K}yoto,
  1982), Lecture Notes in Math., vol. 1016, Springer, Berlin, 1983,
  pp.~151--237. \MR{726427 (85d:32022)}

\bibitem[Lau85]{Laumon:TransformationsCanoniques}
\bysame, \emph{Transformations canoniques et sp\'ecialisation pour les
  {$\mathcal{D}$}-modules filtr\'es}, Ast\'erisque (1985), no.~130, 56--129,
  Differential systems and singularities (Luminy, 1983). \MR{804050
  (87h:32027)}

\bibitem[Pop14]{Popa:SaitoVanishing}
Mihnea Popa, \emph{Kodaira-{S}aito vanishing and applications}, 2014.

\bibitem[PS08]{Peters+Steenbrink:MHS}
Chris Peters and Joseph Steenbrink, \emph{Mixed {H}odge structures}, Ergebnisse
  der Mathematik und ihrer Grenzgebiete. 3. Folge. A Series of Modern Surveys
  in Mathematics, vol.~52, Springer-Verlag, Berlin, 2008. \MR{2393625
  (2009c:14018)}

\bibitem[PS14]{Popa+Schnell:OneForms}
Mihnea Popa and Christian Schnell, \emph{Kodaira dimension and zeros of
  holomorphic one-forms}, Ann. of Math. (2) \textbf{179} (2014), 1--12.

\bibitem[Ram72]{Ramanujam:KodairaVanishing}
Chidambaram Ramanujam, \emph{Remarks on the {K}odaira vanishing theorem}, J.
  Indian Math. Soc. (N.S.) \textbf{36} (1972), 41--51. \MR{0330164 (48 \#8502)}

\bibitem[RIM92]{RIMS:HodgeTheory}
\emph{Algebraic geometry and {H}odge theory}, Proceedings of a symposium held
  at the {R}esearch {I}nstitute for {M}athematical {S}ciences, {K}yoto
  {U}niversity, {K}yoto, {D}ecember 2--6, 1991, Kyoto University Research
  Institute for Mathematical Sciences, Kyoto, 1992,
  S{\=u}rikaisekikenky{\=u}sho K{\=o}ky{\=u}roku No. 803 (1992), pp.~i--viii
  and 1--230. \MR{1227172 (94b:14003)}

\bibitem[Sab07]{Sabbah:HodgeTheorySingularities}
Claude Sabbah, \emph{Hodge {T}heory, {S}ingularities and
  $\mathcal{D}$-modules}, 2007, Lecture Notes (CIRM, Luminy, March 2007).

\bibitem[Sab12]{Sabbah:Bourbaki}
\bysame, \emph{Th\'eorie de {H}odge et correspondance de {K}obayashi-{H}itchin
  sauvages (d'apr\`es {T}.~{M}ochizuki)}, Exp. No. 1050, S\'eminaire Bourbaki.
  Vol. 2011/12.

\bibitem[Sai88]{Saito:HodgeModules}
Morihiko Saito, \emph{Modules de {H}odge polarisables}, Publ. Res. Inst. Math.
  Sci. \textbf{24} (1988), no.~6, 849--995 (1989). \MR{1000123 (90k:32038)}

\bibitem[Sai89a]{Saito:Induced}
\bysame, \emph{Induced {$\mathcal{D}$}-modules and differential complexes},
  Bull. Soc. Math. France \textbf{117} (1989), no.~3, 361--387. \MR{1020112
  (91f:32008)}

\bibitem[Sai89b]{Saito:Introduction}
\bysame, \emph{Introduction to mixed {H}odge modules}, Ast\'erisque (1989),
  no.~179-180, 145--162, Actes du Colloque de Th{\'e}orie de Hodge (Luminy,
  1987). \MR{1042805 (91j:32041)}

\bibitem[Sai90a]{Saito:Kaehler}
\bysame, \emph{Decomposition theorem for proper {K}\"ahler morphisms}, Tohoku
  Math. J. (2) \textbf{42} (1990), no.~2, 127--147. \MR{1053945 (91j:32042)}

\bibitem[Sai90b]{Saito:MixedHodgeModules}
\bysame, \emph{Mixed {H}odge modules}, Publ. Res. Inst. Math. Sci. \textbf{26}
  (1990), no.~2, 221--333. \MR{1047415 (91m:14014)}

\bibitem[Sai91a]{Saito:DModules}
\bysame, \emph{{$\mathcal{D}$}-modules on analytic spaces}, Publ. Res. Inst.
  Math. Sci. \textbf{27} (1991), no.~2, 291--332. \MR{1095238 (92j:32032)}

\bibitem[Sai91b]{Saito:Kollar}
\bysame, \emph{On {K}oll\'ar's conjecture}, Several complex variables and
  complex geometry, {P}art 2 ({S}anta {C}ruz, {CA}, 1989), Proc. Sympos. Pure
  Math., vol.~52, Amer. Math. Soc., Providence, RI, 1991, pp.~509--517.
  \MR{1128566 (92i:14007)}

\bibitem[Sai91c]{Saito:Steenbrink}
\bysame, \emph{On {S}teenbrink's conjecture}, Math. Ann. \textbf{289} (1991),
  no.~4, 703--716. \MR{1103044 (92e:32021)}

\bibitem[Sai94]{Saito:Theory}
\bysame, \emph{On the theory of mixed {H}odge modules}, Selected papers on
  number theory, algebraic geometry, and differential geometry, Amer. Math.
  Soc. Transl. Ser. 2, vol. 160, Amer. Math. Soc., Providence, RI, 1994,
  pp.~47--61. \MR{1308540}

\bibitem[Sai00]{Saito:MixedHodgeComplexes}
\bysame, \emph{Mixed {H}odge complexes on algebraic varieties}, Math. Ann.
  \textbf{316} (2000), no.~2, 283--331. \MR{1741272 (2002h:14012)}

\bibitem[Sai01]{Saito:ArithmeticMixed}
\bysame, \emph{Arithmetic mixed sheaves}, Invent. Math. \textbf{144} (2001),
  no.~3, 533--569. \MR{1833893 (2002f:14014)}

\bibitem[Sai13]{Saito:NewDefinition}
\bysame, \emph{On the definition of mixed {H}odge modules}, 2013.
  \arXiv{1307.2140}

\bibitem[Sch73]{Schmid:VHS}
Wilfried Schmid, \emph{Variation of {H}odge structure: the singularities of the
  period mapping}, Invent. Math. \textbf{22} (1973), 211--319. \MR{0382272 (52
  \#3157)}

\bibitem[Sch11]{Schnell:mhmduality}
Christian Schnell, \emph{Local duality and polarized {H}odge modules}, Publ.
  Res. Inst. Math. Sci. \textbf{47} (2011), no.~3, 705--725. \MR{2832804
  (2012j:14012)}

\bibitem[Sch12]{Schnell:Residues}
\bysame, \emph{Residues and filtered {D}-modules}, Math. Ann. \textbf{354}
  (2012), no.~2, 727--763. \MR{2965259}

\bibitem[Sch14]{Schnell:saito-vanishing}
\bysame, \emph{On {S}aito's vanishing theorem}, 2014.

\bibitem[SV11]{Schmid+Vilonen:Unitary}
Wilfried Schmid and Kari Vilonen, \emph{Hodge theory and unitary
  representations of reductive {L}ie groups}, Frontiers of mathematical
  sciences, Int. Press, Somerville, MA, 2011, pp.~397--420. \MR{3050836}

\bibitem[SZ85]{Steenbrink+Zucker:VMHS}
Joseph Steenbrink and Steven Zucker, \emph{Variation of mixed {H}odge
  structure. {I}}, Invent. Math. \textbf{80} (1985), no.~3, 489--542.
  \MR{791673 (87h:32050a)}

\bibitem[Ver85]{Verdier:ExtensionPerverseSheaf}
Jean-Louis Verdier, \emph{Extension of a perverse sheaf over a closed
  subspace}, Ast\'erisque (1985), no.~130, 210--217, Differential systems and
  singularities (Luminy, 1983). \MR{804054 (87c:32013)}

\bibitem[Zuc79]{Zucker:DegeneratingCoefficients}
Steven Zucker, \emph{Hodge theory with degenerating coefficients. {$L_{2}$}
  cohomology in the {P}oincar\'e metric}, Ann. of Math. (2) \textbf{109}
  (1979), no.~3, 415--476. \MR{534758 (81a:14002)}

\end{thebibliography}

\newcommand{\etalchar}[1]{$^{#1}$}
\providecommand{\bysame}{\leavevmode\hbox to3em{\hrulefill}\thinspace}
\providecommand{\ZM}{\relax\ifhmode\unskip\space\fi Zbl }
\providecommand{\MR}{\relax\ifhmode\unskip\space\fi MR }
\providecommand{\arXiv}[1]{\relax\ifhmode\unskip\space\fi\href{http://arxiv.org/abs/#1}{arXiv:#1}}
% \MRhref is called by the amsart/book/proc definition of \MR.
\providecommand{\MRhref}[2]{%
  \href{http://www.ams.org/mathscinet-getitem?mr=#1}{#2}
}
\providecommand{\href}[2]{#2}

\end{document}